\newtheorem{defi}{Definition}
\newtheorem{ass}{Assumption}
\newtheorem{thm}{Theorem}
\newtheorem{lem}{Lemma}
\newtheorem{prop}{Proposition}
\newtheorem{lemma}{Lemma}
\newtheorem{exm}{Example}
\newtheorem{rem}{Remark}
\DeclareMathOperator*{\argmin}{\arg \min}
\DeclareMathOperator*{\argmax}{\arg \max}
\DeclareMathOperator{\ess}{ess}
\DeclareMathOperator*{\esup}{\ess \sup}
\DeclareMathOperator{\umc}{\mathcal{U}}
\DeclareMathOperator{\hmc}{\mathcal{H}}
\DeclareMathOperator{\rinf}{\mathbb{R} \cup \{+ \infty\}}
\DeclareMathOperator{\rpos}{\mathbb{R}_{> 0}}
\DeclareMathOperator{\dom}{dom}
\newcommand{\tv}{\text{TV}}
\renewcommand{\div}{\text{div}}
\newcommand{\bv}[1][\Omega]{\text{BV}\!\left(#1\right)}%
\newcommand{\dupr}[3][L^2(\Omega)]{\left\langle #2, #3 \right\rangle_{#1}}
\newcommand{\ltwon}[3][L^2(\Sigma)]{\frac{#2}{2} \left\| #3 \right\|^2_{#1}}
\newcommand{\norm}[1]{\Vert #1 \Vert}%
\newcommand{\R}{\mathbb{R}}%
\newcommand{\sign}{\text{sign}}
\newcommand{\realcl}[1][\mathbb{R}]{#1 \cup \{ + \infty \}}
\newcommand{\ul}{u_{\lambda}}
\numberwithin{equation}{section}
\begin{document}

\titlepage

\title{Ground States and Singular Vectors of Convex Variational Regularization Methods}
\author{Martin Benning\footnotemark[1] \and Martin Burger\footnotemark[2]  }
\date{\today}

\maketitle
\renewcommand{\thefootnote}{\fnsymbol{footnote}}
\footnotetext[1]{Magnetic Resonance Research Centre, Department of Chemical Engineering and Biotechnology, c/o Cavendish Stores, JJ Thomson Avenue, Cambridge, CB3 0HE, United Kingdom
(mb941@cam.ac.uk)}
\footnotetext[2]{Westf\"alische Wilhelms-Universit\"at M\"unster, Institut f\"ur
Numerische und Angewandte Mathematik, Einsteinstr. 62, D 48149 M\"unster, Germany
(martin.burger@wwu.de)}

\renewcommand{\thefootnote}{\arabic{footnote}}

\vspace*{-12pt}

\begin{abstract}
Singular value decomposition is the key tool in the analysis and understanding of linear regularization methods in Hilbert spaces. Besides simplifying computations it allows to provide a good understanding of properties of the forward problem compared to the prior information introduced by the regularization methods. In the last decade nonlinear variational approaches such as $\ell^1$ or total variation regularizations became quite prominent regularization techniques with certain properties being superior to standard methods. In the analysis of those, singular values and vectors did not play any role so far, for the obvious reason that these problems are nonlinear, together with the issue of defining singular values and singular vectors in the first place.

In this paper however we want to start a study of singular values and vectors for nonlinear variational regularization of linear inverse problems, with particular focus on singular one-homogeneous regularization functionals. A major role is played by the smallest singular value, which we define as the ground state of an appropriate functional combining the (semi-)norm introduced by the forward operator and the regularization functional. The optimality condition for the ground state further yields a natural generalization to higher singular values and vectors involving the subdifferential of the regularization functional, although we shall see that the Rayleigh principle may fail for higher singular values. 

Using those definitions of singular values and vectors, we shall carry over two main properties from the world of linear regularization. The first one is gaining information about scale, respectively the behavior of regularization techniques at different scales. This also leads to novel estimates at different scales,  generalizing the estimates for the coefficients in the linear singular value expansion. The second one is to provide classes of exact solutions for variational regularization methods. We will show that all singular vectors can be reconstructed up to a scalar factor by the standard Tikhonov-type regularization approach even in the presence of (small) noise. Moreover, we will show that they can even be reconstructed without any bias by the recently popularized inverse scale space method.

\textbf{Key words:} Inverse Problems, Variational Regularization, Singular Values, Ground States, Total Variation Regularization, Bregman Distance, Inverse Scale Space Method, Compressed Sensing.
 
\end{abstract}

\section{Introduction}
Regularization methods and their analysis are a major topic in inverse problems and image processing. In the last century mainly linear regularization methods for problems in Hilbert spaces have been studied and analyzed, and it seems that for such methods a quite complete theory is now available based on singular value decomposition  (cf. \cite{stewart}) of the forward operator in the norm defined by the regularization (cf. \cite{EHN96}), respectively generalizations to spectral decompositions in the rare cases of non-compact forward operators (cf. \cite{EHN96}).

The research in the 21st century has significantly shifted from linear regularizations to nonlinear approaches, in particular variational methods generalizing Tikhonov regularization, where singular regularization functionals such as $\ell^1$-norms or the total variation are used. In many examples the above mentioned functionals have shown to yield improved properties with respect to the incorporation of prior knowledge and the quality of reconstructions, and have also been a key tool in the adjacent theory of compressed sensing (cf. \cite{candestao1,candestao2,donoho1,donoho2}). Various advances in the analysis of such regularization methods have been made over the last years, ranging from basic regularization properties (cf. e.g. \cite{acarvogel,chamlion}) over error estimation (cf. \cite{BO04,burgresm,resm1,resmeritascherzer,BB11,hofmann,lorenz}) to corrections of inherent bias by iterative and time-flow techniques (cf. \cite{groetsch,oshburgolxuyin,gilboa,BFOS07,YOGD08}). Singular values and vectors did so far not play any role in the analysis of such methods and it is common belief that their use is restricted to linear regularization methods. This is not surprising, since first of all it is not trivial to define a notion of singular values and to characterize it in the nonlinear case. Moreover, it is obvious that due to missing linearity no decomposition into singular values can be achieved. For these reasons the study of singular values (or eigenvalues of regularization functionals) has been mainly abandoned in the inverse problems community, studies of related nonlinear eigenvalue problems rather exist in nonlinear partial differential equations and functional inequalities (cf. \cite{agueh,berestycki,browder,dacorognagangbo,dolbeault,franzina,henrot,kawohl2,kawohl3,kawohl4,weinstein}), in control theory (cf. \cite{fujimoto}), in image processing (cf. \cite{alter2,alter1,bellettini,caselles}) and surprisingly in machine learning (cf. \cite{hein,szlam}). Motivated by those as well as general approaches to nonlinear eigenvalue problems we will define a ground state by a Rayleigh-type principle and further singular values and singular vectors by considering the first-order optimality condition for the non-convex variational problem defining ground states. The main results we shall derive are the following:
\begin{itemize}

\item First of all, our definition of singular values and singular vectors is studied and demonstrated to be a meaningful extension of the linear case, although some properties can be lost in extreme cases, e.g. the discreteness of the spectrum and the Rayleigh principle for higher singular values (Section 3). 

\item With the singular values and singular vectors we derive error estimates for appropriate linear functionals of the solution, which provide information about the behavior at different scales. This is made explicit for an example in total variation denoising (Section 4).

\item An important part is to verify that singular vectors are exact solutions of variational regularization schemes. This means that if the image of a singular vector under the forward operator is used for the reconstruction, the solution is a multiple of the singular vector. Surprisingly, under certain conditions particularly met for singular regularizations, the same holds true if a certain amount of noise is added. For inverse scale space methods we can further verify that singular vectors are reconstructed without bias, i.e. after finite time (depending on the singular value) the solution of the inverse scale space equals exactly the singular vector, without a multiplicative change (Sections 5 and 7).
%
%

\item We derive estimates on the bias of variational regularization schemes, which show that the minimal bias is somehow defined by the ground state, respectively by the smallest singular value in our definition (Section 6). 

\item We provide a variety of examples of inverse problems and regularization functionals, for which singular values and singular vectors can be computed explicitly. This allows to draw various conclusions about the behavior of the regularization and the typical shape of preferred solutions (Section 8).

\end{itemize}

\section{Notations and Assumptions}

To fix notation, we consider linear inverse problems of the form
\begin{align}
Ku = f \, \text{,}\label{eq:invprob}
\end{align} 
with $K:\umc \rightarrow \hmc$ being a linear operator mapping from a Banach
space $\umc$ to a Hilbert space $\hmc$, with the goal to recover $u$
from \eqref{eq:invprob} with given data $f$ that is potentially being corrupted by noise. 
%
We are mainly interested in the case of $K$ being compact, in particular continuous from the weak or weak-* topology of $\umc$ to the strong topology of $\hmc$, which creates ill-posedness of the inverse problems.

Nonlinear variational regularization methods for computing robust approximate solutions of \eqref{eq:invprob}
are of the form
\begin{align}
\hat{u} \in \argmin_{u \in \dom(J)} \left\{ \frac{1}{2} \left\| Ku - f
\right\|^2_{\hmc} + \alpha J(u) \right\} \, \text{,} \label{eq:varframe}
\end{align}
with $J:\dom(J) \subseteq \umc \rightarrow \realcl$ being a so-called
regularization functional that incorporates the a-priori knowledge, and $\alpha \in \R_{> 0}$ denoting the regularization parameter that controls the impact of $J$ on the solution $\hat{u}$ of \eqref{eq:varframe}. 
Note that in the variational approach linear regularizations methods are related to quadratic regularization functionals like
$$J(u) = \frac{1}{2} \| Du \|^2_{\umc} $$
for linear operators $D:\umc \rightarrow \umc$, since they lead to linear optimality conditions. 
Another classical choice motivated from statistical mechanics and information theory is the Boltzmann (Shannon) entropy regularization functional, in $\umc=L^1(\Omega)$,  
$$ J(u) = \int_\Omega u \log(u) - u~dx, $$
leading to the so-called maximum entropy regularization (cf. \cite{eggermont}).
Recently popular functionals are non-differentiable regularization energies like the one-norm
$J(u) = \|u\|_{\ell^1}$ in $\umc=\R^N$ or the total variation $J(u) = \tv(u)$,  being defined as
\begin{align}
\tv(u) := \sup_{\substack{\varphi \in C_0^\infty(\Omega; \R^n) \\ \|
\varphi \|_{L^\infty(\Omega; \R^n)} \leq 1}} \int_\Omega u ~\div \varphi ~dx  \,
\text{.}\label{eq:totalvar}
\end{align}
Total variation regularization became popular in  the Rudin-Osher-Fatemi (ROF) model \cite{ROF}
\begin{align}
\hat{u} \in \argmin_{u \in \bv} \left\{ \frac{1}{2} \left\| u - f
\right\|^2_{L^2(\Omega)} + \alpha \tv(u) \right\} \, \text{.} \label{eq:rof}
\end{align} 
The space $\bv$ is the space of all function $u \in L^1(\Omega)$
such that $\tv(u)$ is bounded. In case of $\Omega \subseteq
\R^n$ for $n \in \{1, 2\}$ the space $\bv$ can be embedded into $L^2(\Omega)$.\\

Since we are going to deal with rather large classes of convex functionals $J$, let us recall some basic facts from convex analysis (cf. \cite{rockafellar,ekelandtemam} for detailed discussions).
As usual for a Banach space $\umc$, the Banach space of bounded linear mappings from $\umc$ to $\R$
is called the dual space of $\umc$ and is denoted by $\umc^{*}$, with norm
\begin{align*}
\|p\|_{\umc^{*}} := \sup_{\|u\|_{\umc} = 1} | p(u) | = \sup_{u \in
\umc\setminus\{0\}} \frac{|p(u)|}{\|u\|_{\umc}} =
\sup_{\|u\|_{\umc} \leq 1} | p(u) | \, \text{.}
\end{align*}
The functional $p(u) = \dupr[\umc^{*} \times \umc]{p}{u}$ is called the dual product.
Throughout this work we are going to denote the dual product simply by
$\dupr[\umc]{p}{u}$. In case that $\umc$ is even a Hilbert space, the dual
product can be identified with the scalar product of $\umc$.

The characterization of dual spaces and its elements allows us to define the
subdifferential of a convex functional.
\begin{defi}[Subdifferential]
Let $\umc$ be a Banach space with dual space $\umc^{*}$, and let the proper
functional $J:\umc \rightarrow \rinf$ be convex. Then, $J$ is called
subdifferentiable at $u \in \umc$, if there exists an element $p \in
\umc^{*}$ such that
\begin{align*}
J(v) - J(u) - \langle p, v - u \rangle_{\umc} \geq 0
\end{align*}
holds, for all $v \in \umc$. Furthermore, we call $p$ a subgradient at position
$u$. The collection of all subgradients at position $u$, i.e.
\begin{align*}
\partial J(u) := \left\{ p \in \umc^{*} \ | \ J(v) - J(u) - \langle p, v - u
\rangle_{\umc} \geq 0 \, \text{,} \, \forall v \in \umc \right\} \subset
\umc^{*} \, \text{,}
\end{align*}
is called subdifferential of $J$ at $u$.
\end{defi}

We further mention that the subdifferential of one-homogeneous functionals can be further characterized as
\begin{align}
\partial J(u) := \left\{ p \in \umc^{*} \ | \ \langle p, u
\rangle = J(u), \ \langle p, v
\rangle \leq J(v) \, \text{,} \, \forall v \in \umc \right\} \subset
\umc^{*} \, \text{.}\label{eq:onehomosubdiff}
\end{align}

Another concept we shall use in several arguments is the notion of (generalized) Bregman distances, defined as
\begin{align*}
D_J^p(v,u) = J(v) - J(u) - \dupr[\umc]{p}{v-u} \, \text{,}
\end{align*}
with $p \in \partial J(u)$. Bregman distances are not common distance functionals, since they do not satisfy a triangle inequality and are not symmetric in general. However, for $J$ being convex they are non-negative and satisfy $D_J^p(u,u)=0$. Symmetry can be restored by using symmetric Bregman distances, i.e. 
\begin{align*}
D_J^{p,q,\text{symm}}(v,u) =D_J^{p}(u, v) + D_J^{q}(v, u) =\dupr[\umc]{q-p}{v-u}
\end{align*}
for $q \in \partial J(v)$. 

Before we continue with the definition of ground states and singular vectors for
general convex and subdifferentiable regularization functionals, we want to
precisely define the class of operators and functionals we are going to
investigate. Thus, for the remainder of this work we will assume the following properties without further notice:
\begin{ass}[Setup]\label{defi:setup}\hspace{\linewidth}
\begin{itemize}
  \item $\Omega \subseteq \R^d$, $\Sigma \subseteq \R^k$ are bounded domains.
  \item $\umc$ is a Banach space, being the dual of some other Banach space.
  \item $\hmc$ is a Hilbert space.
  \item $K:\umc \rightarrow \hmc$ is a bounded linear operator mapping between
these spaces
\item $J:\dom(J) \subseteq \umc \rightarrow \rinf$ is a proper non-negative convex functional 
\end{itemize}
\end{ass} 

\section{Ground States and Singular Vectors}\label{sec:gssv}

In this section we want to define ground states of regularization functionals as well as an analogue of singular vectors for nonlinear functionals.

\subsection{Ground States}

We start with a definition of a ground state, which is motivated by similar properties in partial differential equations, e.g. the classical one of a ground-state in the Schr\"odinger equation and related problems (cf. \cite{berestycki,weinstein,agueh}). In order to 
obtain a ground state we normalize the element $u$ and minimize the regularization functional among those elements, i.e. $u_0$ is defined as
	\begin{align}
		u_0 \in  \argmin_{\substack{u \in \dom(J) \\ \| Ku
		\|_{\hmc} = 1}} \left\{ J(u) \right\} \,
		\text{.}\label{eq:groundsttrivial}
	\end{align}
In the context of variational schemes like \eqref{eq:varframe} we are
particularly interested in non-trivial ground states of one-homogeneous regularization
functional. A trivial ground state appears if $J(u_0) = 0$, and we can immediately provide a well-known example for such:

\begin{exm}\normalfont
Let $\Omega \subset \mathbb{R}^d$ with $d \in \{1, 2\}$. Then
we know that $\bv \subset L^2(\Omega)$ holds. Thus, for $K = I$ being
the identity operator $I:L^2(\Omega) \rightarrow L^2(\Omega)$, a trivial
ground state of $J = \tv$ is the constant function $u_0 = 1/\sqrt{|\Omega|}$, since we
have $\tv(u_0) = 0$ and $\| u_0 \|_{L^2(\Omega)} = 1$. Note that as usual the ground
state is not unique, since $-u_0$ is a ground state as well.
\end{exm}
However, in many cases trivial ground states do not give interesting insights
into the nature of a regularization energy, as the previous example shows. Thus,
we would like to investigate non-trivial ground states that are orthogonal to
the trivial ones in a reasonable sense. Let us therefore define some preliminary
notions first.
The $K$-product of two elements $u, v \in \umc$ is defined
as
\begin{align*}
\langle u, v \rangle_K := \langle Ku, Kv
\rangle_{\hmc} \, \text{.}
\end{align*}
Furthermore we are going to write $\|u\|_K$ as an abbreviation for $\sqrt{\langle u, u \rangle_K}$. This particular definition of a scalar product for elements of a Banach space
allows us to define a useful orthogonal complement of a kernel of a regularization
functional, which we define as usual via
\begin{align}
\ker(J)^{\bot} := \{ u \in \dom(J) \ | \ \langle u, v \rangle_K = 0,
\forall v \in \ker(J) \} \, \text{.}
\end{align}
For completeness we also introduce
\begin{align*}
\ker(J) := \{ u \in \dom(J) \ | \ J(u) = 0 \} \, \text{.}
\end{align*}

In the case of convex non-negative one-homogeneous functionals we are mainly interested in, the kernel and also its complement can further be characterized as linear subspaces:

\begin{lemma}
Let $J$ be convex, non-negative and one-homogeneous. Then $\ker(J)$ is a linear subspace.
\end{lemma}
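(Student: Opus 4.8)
I want to show that $\ker(J) = \{u \in \dom(J) : J(u) = 0\}$ is a linear subspace when $J$ is convex, non-negative, and one-homogeneous. A linear subspace needs to be closed under scalar multiplication and under addition, so I will verify these two properties in turn.

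**Scalar multiplication.** If $u \in \ker(J)$ and $\lambda \in \R$, I would like $\lambda u \in \ker(J)$. For $\lambda \geq 0$ this is immediate from one-homogeneity: $J(\lambda u) = \lambda J(u) = 0$. For $\lambda < 0$ I need to know that $J(-u) = 0$ as well. This does not follow from one-homogeneity alone (which as stated only covers nonnegative scalars), so here I would use convexity together with non-negativity: since $0 = \tfrac12 u + \tfrac12(-u)$, convexity gives $J(0) \leq \tfrac12 J(u) + \tfrac12 J(-u) = \tfrac12 J(-u)$; and one-homogeneity gives $J(0) = J(0 \cdot u) = 0$, while non-negativity gives $J(-u) \geq 0$. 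Hence $0 \leq \tfrac12 J(-u)$ is automatic — that inequality alone is not enough. Instead I should argue more carefully: convexity of a one-homogeneous functional makes it subadditive, $J(a+b) \leq J(a) + J(b)$ (apply convexity to $\tfrac12(2a) + \tfrac12(2b)$ and use one-homogeneity). Then $0 = J(0) = J(u + (-u))$... no, that is the wrong direction too. The clean route: subadditivity gives $J(u) \leq J(u + (-u)) + J(-u)$ — still wrong direction. Let me instead directly bound $J(-u)$: for any $t > 0$, $J(-u) = \tfrac1t J(-tu)$, and I want to compare $-tu$ to combinations involving $u$. Actually the correct elementary fact is: a convex, non-negative, one-homogeneous functional with $J(0)=0$ satisfies $J(-u) < \infty$ whenever $-u \in \dom(J)$, but to get $J(-u)=0$ I use $0 = J(0) = J\big(\tfrac12 u + \tfrac12(-u)\big) \le \tfrac12 J(u) + \tfrac12 J(-u) = \tfrac12 J(-u)$, which combined with $J(-u) \ge 0$ does force $J(-u) \ge 0$ only — so this genuinely requires that the stated hypotheses be read as including symmetry, or the paper intends $\ker(J)$ as a subset of $\dom(J)$ where one checks $-u \in \dom(J)$. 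I would flag this and, following the likely intent, note that for the seminorm-type $J$ in question $J(-u) = J(u)$, closing this case.

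**Addition.** If $u, v \in \ker(J)$, then by subadditivity (which, as noted above, follows from convexity plus one-homogeneity: $J(u+v) = 2J\big(\tfrac12 u + \tfrac12 v\big) \le 2\big(\tfrac12 J(u) + \tfrac12 J(v)\big) = J(u) + J(v) = 0$) and non-negativity, $0 \le J(u+v) \le 0$, so $J(u+v) = 0$ and $u + v \in \ker(J)$ (noting $u+v \in \dom(J)$ since $J(u+v) < \infty$). Together with the scalar case this shows $\ker(J)$ is a linear subspace.

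**Main obstacle.** The only delicate point is the negative-scalar case: one-homogeneity as usually stated applies to positive scalars, and convexity plus non-negativity alone do not obviously give $J(-u) = J(u)$. I expect the intended argument is that for the functionals under consideration $J$ is a seminorm (so $J(-u)=J(u)$ trivially), or alternatively that one uses $J(0)=0$ together with convexity in the form $J(0) \le \tfrac{1}{1+t}J(u) + \tfrac{t}{1+t}J(-u/t)$ and lets $t \to 0^+$ with one-homogeneity — but this still needs $J(-u)$ finite. I would therefore structure the proof around subadditivity for the addition part (which is fully rigorous) and handle scalars by invoking the symmetry $J(-u) = J(u)$, remarking that it holds for all the examples treated and is implicit in the one-homogeneous setting.
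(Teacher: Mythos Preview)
Your approach is essentially the same as the paper's: reduce via convexity to a convex combination, pull out positive scalars by one-homogeneity, and then use $J(\pm u)=J(u)=0$ to conclude. The paper just packages the scalar and addition cases into a single computation by writing $au+bv=(|a|+|b|)\bigl(\alpha\,\mathrm{sign}(a)u+(1-\alpha)\,\mathrm{sign}(b)v\bigr)$ with $\alpha=|a|/(|a|+|b|)$, then applying convexity and one-homogeneity in one line.

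On the point you flag: the paper hits exactly the same issue and resolves it by asserting that $J(\tilde u)=J(u)$ for $\tilde u=\mathrm{sign}(a)\,u$ ``due to the one-homogeneity of $J$''. In other words, in this paper ``one-homogeneous'' is meant as absolutely one-homogeneous, $J(\lambda u)=|\lambda|\,J(u)$ for all $\lambda\in\R$, so $J(-u)=J(u)$ is taken as part of the hypothesis rather than something to be derived from convexity and non-negativity. Your instinct that this does not follow from positive homogeneity alone is correct, and your suggested fix (read $J$ as a seminorm, i.e.\ assume $J(-u)=J(u)$) is precisely what the paper does implicitly.
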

\begin{proof}
Let $u,v \in \ker(J)$ and $a,b \in \R$ such that $|a|+|b|\neq 0$. With $\alpha=\frac{|a|}{|a|+|b|} \in [0,1]$, $\tilde u= \sign(a) u$ and
$\tilde v = \sign(b) v$ we obtain
\begin{align*}
\begin{split}
 0 &\leq  J(au+bv) = J((|a|+|b|)(\alpha \tilde u + (1-\alpha) \tilde v))  \\
&= (|a|+|b|) J(\alpha \tilde u + (1-\alpha) \tilde v) \leq (|a|+|b|) (\alpha J(\tilde u) + (1-\alpha) J(\tilde v)),
\end{split} \,
\end{align*}
by using the one-homogeneity and convexity of $J$. Since $J(\tilde u) = J(u) =0$ and $J(\tilde v) = J(v) =0$ hold due to the one-homogeneity of $J$, we conclude $J(au+bv) = 0$ as well. Thus, $au+bv \in \ker(J)$ holds true.
\end{proof}

\begin{exm}\normalfont\label{exm:kernel}
Considering $J = \tv$ again, we easily see that $\ker(\tv)$ equals the
set of all constant functions, because the estimation of the kernel can simply
be reduced to estimating $\ker(\nabla)$.
\end{exm}
\begin{figure}[!ht]
\begin{center}
\includegraphics[scale=0.4]{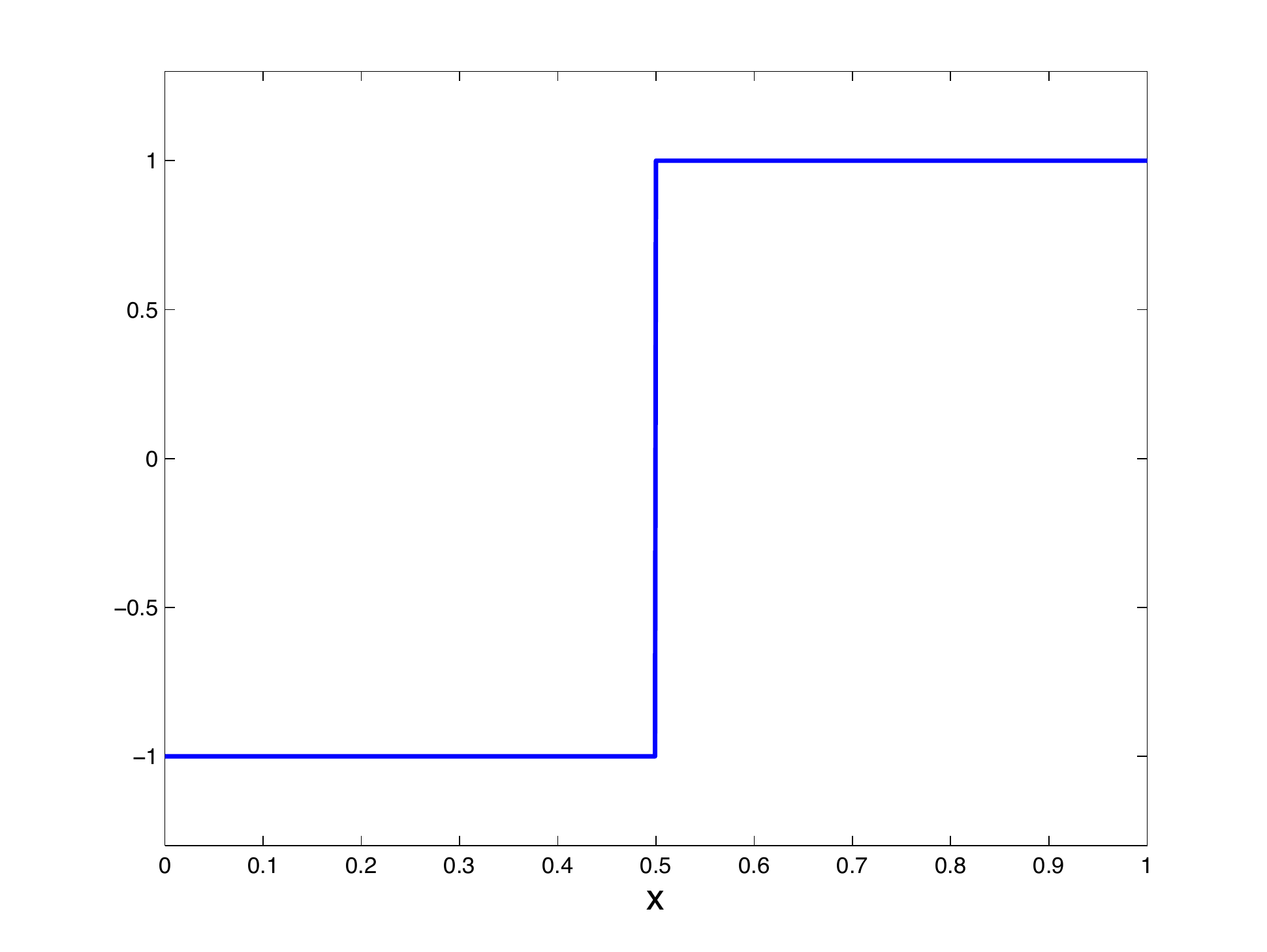}
\end{center}
\caption{The function $u^a$ as defined in \eqref{uadefinition}, for $a = 1/2$. This function is a ground state of $K=I$, $J=\tv$, according to Definition \ref{defi:gs}.}
\label{fig:1dtvgs}
\end{figure}
\begin{defi}[Ground State]\label{defi:gs}
Under the above assumptions on $J$ and $K$, a  ground state $u_0$ is defined as an element
	\begin{align}
		u_0 \in \argmin_{\substack{u \in \ker(J)^{\bot} \\ \| Ku
		\|_{\hmc} = 1}} \left\{ J(u) \right\} \,
		\text{.}\label{eq:groundst}
	\end{align}
Moreover, if $u_0$ exists we call 
\begin{equation}
	\lambda_0 = J(u_0) 
\end{equation}
the smallest singular value. 
\end{defi}
Under standard assumptions on variational regularization methods, the ground state indeed exists, which can be verified by usual arguments:
\begin{thm}
Let $d$ be a metric on $\umc$, let $K$ be continuous from this metric topology to the strong topology of $\hmc$, and let $J$ be lower semicontinuous with respect to this metric topology. Moreover, let the sublevel sets of $u \mapsto \norm{Ku}_{\hmc} + J(u)$ be compact in the metric topology. Then there exists at least one ground state $u_0 \in \umc$.
\end{thm}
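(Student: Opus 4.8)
The plan is to use the direct method of the calculus of variations on the constrained problem \eqref{eq:groundst}. First I would take a minimizing sequence $(u_n) \subset \ker(J)^{\bot}$ with $\|Ku_n\|_{\hmc} = 1$ and $J(u_n) \to \inf$, where the infimum is finite and non-negative since $J \geq 0$ and $\ker(J)^{\bot}$ contains at least one element with unit $K$-norm (so the constraint set is non-empty; this needs to be noted). Along such a sequence, $\|Ku_n\|_{\hmc} + J(u_n)$ is bounded, so by the assumed compactness of the sublevel sets we may extract a subsequence (not relabeled) converging in the metric topology to some $u_0 \in \umc$.

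Next I would verify that $u_0$ lies in the constraint set and is a minimizer. Lower semicontinuity of $J$ with respect to the metric topology gives $J(u_0) \leq \liminf_n J(u_n) = \inf$, so $u_0 \in \dom(J)$ and $J(u_0)$ realizes the infimum \emph{provided} $u_0$ is admissible. Continuity of $K$ from the metric topology to the strong topology of $\hmc$ yields $Ku_n \to Ku_0$ strongly, hence $\|Ku_0\|_{\hmc} = \lim_n \|Ku_n\|_{\hmc} = 1$, so the normalization constraint is preserved. For the orthogonality constraint, note that for each fixed $v \in \ker(J)$ the map $u \mapsto \langle u, v\rangle_K = \langle Ku, Kv\rangle_{\hmc}$ is continuous under the metric topology (again because $Ku_n \to Ku_0$ strongly and the inner product is continuous), so $\langle u_0, v\rangle_K = \lim_n \langle u_n, v\rangle_K = 0$ for all $v \in \ker(J)$, i.e. $u_0 \in \ker(J)^{\bot}$. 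Combining, $u_0$ is admissible and attains the infimum, hence is a ground state.

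The one subtlety worth flagging is the non-emptiness of the constraint set $\{u \in \ker(J)^{\bot} : \|Ku\|_{\hmc} = 1\}$: if it were empty the infimum would be $+\infty$ and the statement vacuous (or false). Strictly this should be assumed or derived from the setup — e.g. it holds as soon as $\dom(J) \cap \ker(J)^{\bot}$ contains an element not in $\ker(K)$, after rescaling by one-homogeneity of $J$ (which makes the rescaling legitimate and keeps the point in the linear subspace $\ker(J)^{\bot}$). I would state this as the standing non-degeneracy hypothesis. The main obstacle, such as it is, is purely bookkeeping: making sure the two constraints (the nonconvex sphere $\|Ku\|_{\hmc}=1$ and membership in $\ker(J)^{\bot}$) are both closed under the chosen convergence — and both follow cleanly from strong convergence of $Ku_n$, so no real difficulty arises beyond invoking the hypotheses in the right order.
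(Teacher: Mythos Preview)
Your proof is correct and is precisely the standard direct-method argument the paper has in mind. Note that the paper does not actually give a proof of this theorem: it merely states that existence ``can be verified by usual arguments'' and moves on. Your write-up supplies exactly those usual arguments --- minimizing sequence, compactness of sublevel sets, lower semicontinuity of $J$, and strong convergence of $Ku_n$ to pass the sphere and orthogonality constraints to the limit --- so there is nothing to compare against. Your remark on non-emptiness of the constraint set is a reasonable caveat that the paper leaves implicit.
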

\noindent We want to give a brief example of ground states in case of $J(u) = \tv(u)$.
\begin{exm}\label{exm:gs}\normalfont
For $\Omega = [0, 1] \subset \mathbb{R}$, $K = I$ and $J(u) = \tv(u)$ we want to consider
\begin{equation}
u^a(x) = \begin{cases} \frac{\sqrt{(1 - a)a}}{1-a} & x \geq a\\ -\frac{\sqrt{(1 - a)a}}{a} & x < a \end{cases} \, \text{,}	\label{uadefinition}
\end{equation}
for $a \in (0, 1)$. We easily see that $u^a$ is orthogonal to the kernel of
$\tv$, which consists of all constant functions due to Example \ref{exm:kernel},
since $\int_0^1 u^a(x) ~dx = 0$ holds. Moreover, $u^a$ guarantees the normalization constraint $\| u^a \|_{L^2([0, 1])} = 1$ for every $a \in ]0, 1[$. However, the $\tv$-value $\tv(u^a) = \frac{1}{\sqrt{(1 - a)a}}$ is a strictly convex function in $a$ with unique minimum at $a = 1/2$. Thus, $u_0 = u^{\frac{1}{2}}$, which is visualized in Figure \ref{fig:1dtvgs}, is a ground state if we can prove that there does not exist a function $\tilde{u}$ with $\dupr[{L^2([0, 1])}]{\tilde{u}}{1} = 0$, $\|\tilde{u}\|_{L^2([0, 1])} = 1$ and $\tv(\tilde{u}) < 2$.
\begin{lem}\label{lem:tv1Dgs}
There exists no function $\tilde{u} \in C([0, 1])$ with $\|\tilde{u}\|_{L^2([0, 1])} = 1$ and $\langle 1, \tilde{u} \rangle_{L^2([0, 1])} = 0$ such that $\tv(\tilde{u}) < 2$ holds.
\begin{proof}
It is easy to see that for the monotonic rearrangement of an arbitrary function $\tilde{u} \in C([0, 1])$, which we want to denote by $\tilde{u}^*$, we have (cf. e.g \cite{agueh})
\begin{align*}
\tv(\tilde{u}^*) \leq \tv(\tilde{u}) \, \text{.}
\end{align*}
Thus, in the following we are going to consider monotonically increasing functions $\tilde{u}^* \in C([0, 1])$ with $\|\tilde{u}^*\|_{L^2([0, 1])} = 1$ and $\langle 1, \tilde{u}^* \rangle_{L^2([0, 1])} = 0$ only, without loss of generality. Now we want to prove $\tv(\tilde{u}^*) \geq 2 $ by contradiction and therefore subdivide the proof into two parts. First of all we are going to prove the inequalities 
\begin{align}
(\tilde{u}^*(1))^2 - 1 &\geq \left( (\tilde{u}^*(1))^2 - (\tilde{u}^*(0))^2\right)y\label{eq:gsproof1}
\intertext{and}
1 &\leq \tv(\tilde{u}^*) \left( - \int_0^y \tilde{u}^*(x)  \right) \, \text{,}\label{eq:gsproof2}
\end{align}
with $y$ denoting a root of $\tilde{u}^*$. Subsequently we are going to use \eqref{eq:gsproof1} and \eqref{eq:gsproof2} to conclude the contradiction.

For a monotonically increasing function $\tilde{u}^*$ we can rewrite the normalization constraint $\| \tilde{u}^* \|_{L^2([0, 1])} = 1$ to
\begin{align*}
1 = \| \tilde{u}^* \|^2_{L^2([0, 1])} &= \int_0^y \left(\tilde{u}^*(x)\right)^2
~dx + \int_y^1 \left(\tilde{u}^*(x)\right)^2 ~dx\\
&\leq y \left(\tilde{u}^*(0)\right)^2 + (1 - y) \left(\tilde{u}^*(1)\right)^2 \, \text{,}
\end{align*}
with $y$ denoting a root of $\tilde{u}^*$. Rearranging immediately yields \eqref{eq:gsproof1}.

Moreover, according to the second mean value theorem of integration there exists a $\xi \in ]0, 1[$ such that we can rewrite the normalization constraint to
\begin{align*}
1 &= \int_0^1 \tilde{u}^*(x) \tilde{u}^*(x) ~dx\\
&= \tilde{u}^*(0) \int_0^\xi \tilde{u}^*(x) ~dx + \tilde{u}^*(1) \int_\xi^1 \tilde{u}^*(x) ~dx\\
&= \underbrace{\left( \tilde{u}^*(1) - \tilde{u}^*(0)\right)}_{= \tv(\tilde{u}^*)} \int_\xi^1 \tilde{u}^*(x) ~dx \, \text{,}
\end{align*}
where the last equality holds due to $\langle 1, \tilde{u}^* \rangle_{L^2([0, 1])} = 0$. Since the value of $\int_\xi^1 \tilde{u}^*(x)~dx$ gets maximal for $\xi = y$ and since we know $\int_y^1 \tilde{u}^*(x) ~dx = - \int_0^y \tilde{u}^*(x) ~dx$ we obtain \eqref{eq:gsproof2}.

Finally, we are now able to proof the lemma's statement via contradiction. We
assume $\tilde{u}^*$ to satisfy $\tv(\tilde{u}^*) < 2$. Then, the normalization
constraint $\| \tilde{u}^* \|_{L^2([0, 1])} = 1$ however implies either $\tilde{u}^*(1) \geq 1$ or $\tilde{u}^*(0) \leq -1$. Without loss of generality we assume $\tilde{u}^*(1) \geq 1$. Thus, there exists a constant $c \geq 0$ such that $\tilde{u}^*(1) = 1 + c$ holds. Due to $\tv(\tilde{u}^*) < 2$ this automatically implies $\tilde{u}^*(0) \geq -1 + c$ and $(\tilde{u}^*(0))^2 \leq (-1 + c)^2$. Applying \eqref{eq:gsproof1} therefore yields
\begin{align*}
(1 + c)^2 - 1 \geq \left( (1 + c)^2 - (1 - c)^2\right)y \, \text{,}
\end{align*}
which can be rewritten to $y \leq c/4 + \frac{1}2$. We are therefore able to estimate \eqref{eq:gsproof2} to obtain
\begin{align*}
1 &\leq \tv(\tilde{u}^*) \left( - \int_0^y \tilde{u}^*(x) ~dx \right) \\
&\leq \tv(\tilde{u}^*) y \left(-\tilde{u}^*(0)\right) \leq \tv(\tilde{u}^*) y (1 - c)\\
&\leq \tv(\tilde{u}^*) \left( \frac{c}{4} + \frac{1}{2} \right) (1 - c) \\
&\leq \tv(\tilde{u}^*) \left(\frac{1}{2} - \frac{c^2}{4} - \frac{c}{4} \right) \leq \frac{1}{2} \tv(\tilde{u}^*) \, \text{,}
\end{align*}
which yields $2 \leq \tv(\tilde{u}^*)$ and therefore is a contradiction to the assumption $\tv(\tilde{u}^*) < 2$.
\end{proof}
\end{lem}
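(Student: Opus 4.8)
The plan is to prove the equivalent sharp statement $\tv(\tilde u)\ge 2$ for every $\tilde u\in C([0,1])$ with $\|\tilde u\|_{L^2([0,1])}=1$ and $\int_0^1\tilde u\,dx=0$, and to reduce everything to the two scalars $m:=\min_{[0,1]}\tilde u$ and $M:=\max_{[0,1]}\tilde u$, which exist by continuity on a compact interval. First I would note that the constraints force $m<0<M$ strictly: if $m\ge 0$ then $\tilde u\ge 0$, so $\int_0^1\tilde u\,dx=0$ would give $\tilde u\equiv 0$, contradicting $\|\tilde u\|_{L^2}=1$, and symmetrically for $M$. The extremal profile should be the jump function $u^{1/2}$ of Example~\ref{exm:gs}, whose values are exactly $\pm1$, so any proof must use both constraints together and be tight.

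The core of the argument is a single pointwise inequality. Since $m\le\tilde u(x)\le M$ for all $x$, the function $(\tilde u(x)-m)(M-\tilde u(x))$ is nonnegative on $[0,1]$, hence $\int_0^1(\tilde u-m)(M-\tilde u)\,dx\ge 0$. Expanding the integrand as $-\tilde u^2+(M+m)\tilde u-mM$ and inserting $\int_0^1\tilde u^2\,dx=1$ and $\int_0^1\tilde u\,dx=0$ collapses the inequality to $-1-mM\ge 0$, that is $(-m)M\ge 1$. To finish I would invoke the elementary fact that the total variation dominates the oscillation, $\tv(\tilde u)\ge M-m$; for smooth $\tilde u$ this reads $\int_0^1|\tilde u'|\,dx\ge|\tilde u(x_M)-\tilde u(x_m)|=M-m$ for points $x_m,x_M$ attaining the extrema, and the general case follows either by approximation or by passing to the monotone rearrangement $\tilde u^*$ (which, by \cite{agueh}, does not increase $\tv$, preserves both the $L^2$-norm and the mean, and satisfies $\tv(\tilde u^*)=\tilde u^*(1)-\tilde u^*(0)=M-m$). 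Then AM--GM yields $\tv(\tilde u)\ge M-m\ge 2\sqrt{(-m)M}\ge 2$, contradicting $\tv(\tilde u)<2$.

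The only delicate point I anticipate is justifying the oscillation bound $\tv(\tilde u)\ge M-m$ directly from the distributional definition \eqref{eq:totalvar} of $\tv$; this is the step I would spend care on, settling it via the rearrangement reduction above or by testing \eqref{eq:totalvar} against a mollified, suitably signed bump supported on an interval joining an argmin and an argmax of $\tilde u$. Everything else is purely algebraic, and the fact that equality throughout the chain forces $M=1$ and $m=-1$ is a reassuring consistency check against the ground state $u^{1/2}$.
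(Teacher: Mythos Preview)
Your argument is correct and is genuinely different from the paper's. The paper reduces to a monotone rearrangement $\tilde u^*$, then splits the $L^2$-normalization at a root $y$ of $\tilde u^*$, combines this with an application of the second mean value theorem for integrals to obtain two auxiliary inequalities, and finishes by a contradiction argument carrying an extra parameter $c=\tilde u^*(1)-1$ through a chain of estimates. By contrast, you extract everything from the single pointwise bound $(\tilde u-m)(M-\tilde u)\ge 0$: integrating it against the two constraints gives the Bhatia--Davis--type inequality $(-m)M\ge 1$, and then the oscillation estimate $\tv(\tilde u)\ge M-m$ together with AM--GM closes the proof in one line. Your route is shorter, avoids the root $y$, the second mean value theorem, and the contradiction with the parameter $c$, and it makes the equality case ($M=-m=1$, $\tilde u$ two-valued) transparent. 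The paper's approach buys nothing extra here; your only point needing a word of justification is $\tv(\tilde u)\ge M-m$, which you already flag and which is immediate either from the rearrangement reduction (as in the paper) or by testing the definition of $\tv$ on a suitable $\varphi$.
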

\end{exm}
Note that $u_0$ of the previous example is not a unique ground state, since $-u_0$ yields the same minimum. However, this is also true for ground states of quadratic variational schemes and therefore no surprise. The following example shows that ground states are not unique in general.
\begin{exm}\label{exm:ell1gs}\normalfont
Let $\umc={\ell^1(\R^N)}$ and $\hmc = \ell^2(\R^M)$.
Consider the regularization energy $J(u) = \| u \|_{\ell^1}$, and as an operator a matrix $K$ with columns being normalized with respect to the $\ell^2$-norm, i.e. $\left(Ke_j\right)^{T} \cdot \left(Ke_j\right) = 1$, with $e_j$ denoting the $j$-th unit vector. Then every vector 
$ e_i = (\delta_{ji})_{j=1,\ldots,N}$, with $\delta_{ij}$ denoting the Kronecker delta, is a ground state.
\end{exm}

In some cases it is useful to have an alternative definition of ground states of variational regularizations like \eqref{eq:varframe}:
\begin{defi}[Ground State II]\label{defi:gs2}
A ground state
$u_0$ is defined as
	\begin{align}
		u_0 \in \argmax_{\substack{u \in \ker(J)^{\bot} \\ J(u) \leq 1}} \left\{
		\|Ku\|_{\hmc} \right\} \, \text{.}\label{eq:groundst2}
	\end{align}
If $u_0$ exists, we shall call 
\begin{equation}
	\lambda_0 = \frac{1}{\|Ku_0\|_{\hmc} }
\end{equation}
the smallest singular value. 
\end{defi}

At first glance, Definition \ref{defi:gs2} appears to be very different from Definition \ref{defi:gs}; however, for one-homogeneous functionals $J$ both definitions are equivalent up to normalization of the singular vector and yield the same singular value, as we will see with the following Lemma.
\begin{lem}
Let $J$ be a proper and one-homogeneous functional. Then, Definition \ref{defi:gs} and Definition \ref{defi:gs2} are equivalent up to multiplication of $u_0$ by  $\lambda_0$.
\begin{proof}
$\Rightarrow$: Let $u_0$ be a ground state that satisfies Definition
\ref{defi:gs}. Then, for $\tilde{u} := u_0/J(u_0) = u_0/\lambda_0$ we obtain
$J(\tilde{u}) = 1$ and $\| K\tilde{u} \|_{\hmc} = 1/J(u_0) = 1/\lambda_0$. Thus,
in order to satisfy Definition \ref{defi:gs2}, $\tilde{u}$ is supposed to
maximize $\| Ku\|_{\hmc}$, i.e. $\| K\tilde{u}\|_{\hmc} \geq \|
Kv\|_{\hmc}$ for all $v$ with $J(v) \leq 1$. We prove this statement by
contradiction and assume that there exists a function $v$ with $\| Kv \|_{\hmc}
> 1/\lambda_0$ and $J(v) \leq 1$. However, if such a function exists, we can
define $\tilde{v} := v/\|Kv\|_{\hmc}$ to obtain a function that satisfies
$\|K\tilde{v}\|_{\hmc} = 1$ and $J(\tilde{v}) \leq 1/\|Kv\|_{\hmc} <
\lambda_0$, which is a contradiction to $u_0$ being a ground state in the sense of Definition \ref{defi:gs}.\\
$\Leftarrow$: Now let $u_0$ be a ground state in terms of Definition
\ref{defi:gs2}, i.e. $J(u_0) \leq 1$ such that $\|Ku_0\|_{\hmc} =: 1/\lambda_0$
is maximized. Then we can define $\tilde{u} = \lambda_0 u_0$, which satisfies
$\|K\tilde{u}\|_{\hmc} = 1$ and $J(\tilde{u}) = \lambda_0$. In analogy to the
first part of the proof, we prove by contradiction that $\tilde{u}$ already has
to be a ground state in terms of Definition \ref{defi:gs}. We therefore assume
that there exists a function $v$ such that $J(v) < \lambda_0$ and $\|Kv\|_{\hmc}
= 1$ holds. If such a function exists, than $\tilde{v} := v/J(v)$ exists as
well. However, for $\tilde{v}$ we observe $J(\tilde{v}) = 1$ and
$\|K\tilde{v}\|_{\hmc} > 1/\lambda_0$, which is a contradiction to $u_0$ being a
ground state in terms of Definition \ref{defi:gs2}.
\end{proof}
\end{lem}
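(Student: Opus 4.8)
The plan is to exhibit an explicit rescaling that maps the feasible set of one problem onto that of the other, reversing the roles of the objective and the constraint in a monotone fashion, so that optimality is preserved in both directions. Throughout I would work under the tacit assumption that the constraint sets are non-empty (so the $\argmin$/$\argmax$ are meaningful) and that the ground state is non-trivial, i.e.\ $\lambda_0>0$; the degenerate case $\lambda_0=0$ has to be excluded here, since it makes the division by $\lambda_0$ illegal and the two formulations genuinely incomparable. A small but repeatedly used observation is that $\ker(J)^{\bot}$ is a cone: if $\langle u,v\rangle_K=0$ for all $v\in\ker(J)$ then $\langle au,v\rangle_K=a\langle u,v\rangle_K=0$ as well, so membership in $\ker(J)^{\bot}$ is unaffected by scalar multiplication.

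For the implication from Definition \ref{defi:gs} to Definition \ref{defi:gs2}, I would take $u_0$ with $u_0\in\ker(J)^{\bot}$, $\|Ku_0\|_{\hmc}=1$ and $J(u_0)=\lambda_0$ minimal, and set $\tilde u:=u_0/\lambda_0$. One-homogeneity gives $J(\tilde u)=1$ and the linearity of $K$ gives $\|K\tilde u\|_{\hmc}=1/\lambda_0$, so $\tilde u$ is feasible for \eqref{eq:groundst2}. To show it is optimal there, I would argue by contradiction: if some $v\in\ker(J)^{\bot}$ with $J(v)\le 1$ had $\|Kv\|_{\hmc}>1/\lambda_0$, then $\tilde v:=v/\|Kv\|_{\hmc}$ would lie in $\ker(J)^{\bot}$, satisfy $\|K\tilde v\|_{\hmc}=1$, and have $J(\tilde v)=J(v)/\|Kv\|_{\hmc}\le 1/\|Kv\|_{\hmc}<\lambda_0$, contradicting the minimality in \eqref{eq:groundst}. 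This simultaneously shows the singular value read off from $\tilde u$ in Definition \ref{defi:gs2} is $1/\|K\tilde u\|_{\hmc}=\lambda_0$.

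The converse implication is entirely symmetric. Given $u_0$ as in Definition \ref{defi:gs2} with $1/\lambda_0:=\|Ku_0\|_{\hmc}$ maximal over $\{u\in\ker(J)^{\bot}:J(u)\le 1\}$, I would first note that at the optimum one must have $J(u_0)=1$: otherwise $J(u_0)<1$ and $u_0/J(u_0)$ would be feasible with strictly larger $\|K\cdot\|_{\hmc}$, using one-homogeneity. Then $\tilde u:=\lambda_0 u_0$ has $\|K\tilde u\|_{\hmc}=1$ and $J(\tilde u)=\lambda_0 J(u_0)=\lambda_0$; a contradiction argument as above — if $v\in\ker(J)^{\bot}$ with $\|Kv\|_{\hmc}=1$ and $J(v)<\lambda_0$, then $v/J(v)$ is feasible for \eqref{eq:groundst2} with $\|K(v/J(v))\|_{\hmc}=1/J(v)>1/\lambda_0$ — shows $\tilde u$ solves \eqref{eq:groundst} with value $\lambda_0$.

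I do not expect any serious analytic obstacle: every inequality is a one-line consequence of the one-homogeneity of $J$ together with the scaling behaviour of $u\mapsto\|Ku\|_{\hmc}$. The only point that needs genuine care is the bookkeeping at the boundary — assuming non-emptiness of the constraint sets, separating off (or explicitly excluding) the trivial case $\lambda_0=0$, and recording that on an optimizer of Definition \ref{defi:gs2} the constraint $J(u_0)\le 1$ is active — so that the phrase ``equivalent up to multiplication by $\lambda_0$'' is literally correct rather than only true up to a further harmless rescaling.
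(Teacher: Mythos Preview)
Your argument is correct and follows essentially the same rescaling-plus-contradiction route as the paper's proof, in both directions. Your additional remarks (that $\ker(J)^\bot$ is a cone, that $\lambda_0>0$ is needed, and that the constraint $J(u_0)\le 1$ is active at the optimum) are not in the paper but only sharpen the presentation; the paper tacitly uses the last of these when it writes $J(\tilde u)=\lambda_0$.
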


\subsection{Singular Vectors}

In analogy to singular vectors of linear operators we want to extend the concept of singular vectors to variational frameworks of the form \eqref{eq:varframe}. The motivation is to consider the formal optimality condition for the ground state, which is obtained by considering stationary points of the Lagrange functional (with parameter $\lambda \in \R$)
\begin{equation}
	L(u;\lambda) = J(u) - \frac{ \lambda}2 ( \|Ku\|_{\hmc}^2 - 1), 
\end{equation}
which is given by 
$$ \lambda K^*K \in \partial J(u), $$
where $\partial J$ denotes the subdifferential.

\noindent Hence, we define a singular vector as follows:
\begin{defi}[Singular Vector]\label{def:singvec}
Let $J$ be 
convex with non-empty subdifferential $\partial J$ at every $u \in \dom(J)$. Then, every function $\ul \neq 0$ with $\|K\ul\|_{\hmc} =
1$ satisfying
\begin{align}
\lambda K^{*}K\ul \in \partial J\left(\ul\right)\label{eq:svdefi}
\end{align}
is called singular vector of $J$ with corresponding singular value $\lambda$. The subgradient 
$p_\lambda = \lambda K^{*}K\ul$ is called dual singular vector in the following. 
\end{defi}


\noindent We mention that taking a dual product with $u_\lambda$ yields the singular value relation
\begin{align*}
\lambda &= \frac{\langle p_\lambda, \ul \rangle_{\umc}}{\|K\ul\|^2_{\hmc}} =
\langle p_\lambda, \ul \rangle_{\umc} \, \text{.}
\end{align*}
In the case of $J$ being one-homogeneous we even have
\begin{align*}
\lambda &= J(u) \, \text{,}
\end{align*}
which also implies $\lambda \geq \lambda_0$, for any singular value $\lambda$.

For smooth $J$ one can prove that the ground state is a singular vector by analyzing the Lagrange functional above (cf. \cite{hinzepinauulbrichulbrich}). In the one-homogeneous case we give an alternative proof:

\begin{prop}
Let $J$ be one-homogeneous and let $u_0$ be the ground state with $\lambda_0=J(u_0)$. Then $\lambda_0$ is a singular value and $u_0$ is a singular vector.  
\end{prop}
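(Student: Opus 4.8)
\medskip
\noindent\emph{Proof sketch.}
The plan is to show directly that $p_0 := \lambda_0 K^{*}Ku_0$ belongs to $\partial J(u_0)$; since $\|Ku_0\|_{\hmc} = 1$ by the definition of the ground state in \eqref{eq:groundst}, this is precisely what \eqref{eq:svdefi} demands, with $\lambda = \lambda_0$. Exploiting that $J$ is one-homogeneous, I would invoke the characterization \eqref{eq:onehomosubdiff}, so that it suffices to check (a) $\langle p_0, u_0 \rangle_{\umc} = J(u_0)$ and (b) $\langle p_0, v \rangle_{\umc} \le J(v)$ for all $v \in \umc$. Part (a) is a one-line computation: $\langle p_0, u_0 \rangle_{\umc} = \lambda_0 \langle Ku_0, Ku_0 \rangle_{\hmc} = \lambda_0 \|Ku_0\|_{\hmc}^2 = \lambda_0 = J(u_0)$.

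The real content is (b), which I would establish first for $v \in \ker(J)^{\bot} \cap \dom(J)$ by a first-variation argument around the ground state. For small $\varepsilon > 0$ set $w_\varepsilon := (u_0 + \varepsilon v)/\|K(u_0 + \varepsilon v)\|_{\hmc}$. Since $\dom(J)$ is a convex cone (hence stable under addition and positive scaling) and $\ker(J)^{\bot}$ is a linear subspace, $w_\varepsilon$ is admissible for \eqref{eq:groundst} once $\varepsilon$ is small enough that the denominator, which tends to $1$, is positive. Minimality of $u_0$ then gives $\lambda_0 = J(u_0) \le J(w_\varepsilon)$, and multiplying through by the norm and using subadditivity (a consequence of convexity and one-homogeneity) together with one-homogeneity of $J$ leads to
\[
  \lambda_0\,\|K(u_0 + \varepsilon v)\|_{\hmc} \;\le\; J(u_0 + \varepsilon v) \;\le\; J(u_0) + \varepsilon\,J(v) \;=\; \lambda_0 + \varepsilon\,J(v).
\]
Expanding $\|K(u_0 + \varepsilon v)\|_{\hmc} = \bigl(1 + 2\varepsilon\langle Ku_0, Kv\rangle_{\hmc} + \varepsilon^2\|Kv\|_{\hmc}^2\bigr)^{1/2} = 1 + \varepsilon\langle Ku_0, Kv\rangle_{\hmc} + O(\varepsilon^2)$, dividing by $\varepsilon$ and letting $\varepsilon \downarrow 0$ would yield $\lambda_0 \langle Ku_0, Kv\rangle_{\hmc} \le J(v)$, i.e.\ $\langle p_0, v\rangle_{\umc} \le J(v)$.

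Finally I would extend (b) to all $v \in \umc$. The case $v \notin \dom(J)$ is trivial. For $v \in \dom(J)$, both sides of the inequality are unchanged under $v \mapsto v + z$ with $z \in \ker(J)$: one-homogeneity and convexity force $J$ to be constant along $v + \ker(J)$, and $\langle p_0, z\rangle_{\umc} = \lambda_0\langle Ku_0, Kz\rangle_{\hmc} = 0$ because $u_0 \in \ker(J)^{\bot}$. It therefore suffices to pick a representative of $v + \ker(J)$ lying in $\ker(J)^{\bot}$, obtained by subtracting the $\ker(J)$-component of $v$; this is the one step that needs a word of care, since it uses that $K(\ker(J))$ is well behaved enough (e.g.\ closed in $\hmc$) for the required projection to exist, which holds in all concrete situations treated here, in particular whenever $\ker(J)$ is finite-dimensional, as for $\tv$ or $\ell^1$. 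Granting (a) and (b), $p_0 = \lambda_0 K^{*}Ku_0 \in \partial J(u_0)$, so $u_0$ is a singular vector with singular value $\lambda_0$ in the sense of Definition \ref{def:singvec}. I expect the perturbation argument of the middle paragraph to be the only genuinely non-routine step; the reduction just discussed is the place one should double-check, and everything else is bookkeeping.
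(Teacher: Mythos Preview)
Your argument is correct, but it takes a noticeably longer route than the paper's. Both proofs set $p_0=\lambda_0 K^*Ku_0$ and verify the two conditions in \eqref{eq:onehomosubdiff}; part (a) is identical. The difference is entirely in (b). You perturb $u_0$ along $v$, expand $\|K(u_0+\varepsilon v)\|_{\hmc}$ to first order, use subadditivity to control $J(u_0+\varepsilon v)$, and pass to the limit. The paper instead observes directly that for any $u$ with $Ku\neq 0$,
\[
\langle p_0,u\rangle_{\umc}=\lambda_0\langle Ku_0,Ku\rangle_{\hmc}\le \lambda_0\|Ku\|_{\hmc}
\]
by Cauchy--Schwarz, and then invokes the Rayleigh characterization of the ground state to get $\lambda_0\le J(u)/\|Ku\|_{\hmc}$, which finishes the estimate in two lines with no limiting procedure or Taylor expansion. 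The case $Ku=0$ is trivial since then $\langle p_0,u\rangle_{\umc}=0\le J(u)$.

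What your approach buys is a clearer accounting of the role of $\ker(J)^\bot$: you explicitly first prove (b) on $\ker(J)^\bot$ and then reduce the general case by translating along $\ker(J)$, noting carefully that this step needs the $K$-orthogonal projection onto $\ker(J)^\bot$ to exist. The paper's proof, read literally, applies ``$\lambda_0\le J(v)$'' to an arbitrary normalized $v$, which is only immediate for $v\in\ker(J)^\bot$; it tacitly relies on the same reduction you spell out. So your version is more honest about this point, at the price of the first-variation machinery being heavier than necessary: once you have isolated the reduction to $\ker(J)^\bot$, the Cauchy--Schwarz argument already suffices and is the cleaner way to finish.
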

\begin{proof}
For $J$ being one-homogeneous, $p_0 \in \partial J(u_0)$ is equivalent to $\langle p_0, u_0 \rangle_{\umc} = J(u_0)$ and 
$$ \langle p_0, u \rangle_{\umc} \leq J(u), \qquad \forall~u \in \umc \, \text{,} $$
due to \eqref{eq:onehomosubdiff}. We verify this property for $p_0 = \lambda_0 K^* K u_0$. First of all we obtain 
$$ \langle p_0, u_0 \rangle_{\umc} = \lambda_0 \langle Ku_0, Ku_0 \rangle_{\hmc} = J(u_0) $$
by the definition of $\lambda_0$ and the normalization of $u_0$. Moreover, for arbitrary $u \in \umc$ with $Ku \neq 0$ we define  $v=u/\Vert K u \Vert_{\hmc}$ and find
$$ \langle p_0, u \rangle_{\umc} = \Vert K u \Vert_{\hmc} \lambda_0 \langle K u_0, K v \rangle_{\hmc} \leq \Vert K u \Vert_{\hmc} \lambda_0. $$
Since $v$ is normalized, we have by the definition of the ground state 
$$  \lambda_0 = J(u_0) \leq J(v) = \frac{J(u)}{\Vert Ku\Vert_{\hmc}} $$
and thus, $\langle p_0, u \rangle_{\umc} \leq J(u)$.
If $Ku=0$, then 
$$\langle p_0, u \rangle_{\umc} = \lambda_0 \langle K u_0 , Ku \rangle_{\hmc} = 0 \leq J(u), $$
thus $p_0 \in \partial J(u_0)$.
\end{proof}

Higher singular values and singular vectors are difficult to characterize as we shall see from examples below. Also orthogonality of singular vectors corresponding to different singular values is lost. Consider 
$\lambda K^* K u_\lambda = p_\lambda$ and $\mu K^* K u_\mu = p_\mu$ for singular vectors $u_\lambda$ and $u_\mu$, then we only have 
$$ \frac{1}\lambda \langle p_\lambda, u_\mu \rangle = \frac{1}\mu \langle p_\mu, u_\lambda \rangle \, \text{.} $$

\noindent We want to give two examples:
\begin{exm}\normalfont
Let us investigate the functional $J(u) = \frac{1}2 \| \nabla u \|^2_{L^2(\Omega;\mathbb{R}^n)}$, in order to demonstrate that the definition of singular vectors is consistent with the definition of singular vectors of linear operators. Since $J$ is Fr\'{e}chet-differentiable, its subdifferential consists of its Fr\'{e}chet-derivative only, i.e. $\partial J(u) = \{ -\Delta u \}$. Considering $K = I$ for simplicity, \eqref{eq:svdefi} reads as the classical Eigenfunction problem of the Laplace operator, i.e.
\begin{align*}
-\lambda u_\lambda = \Delta u_\lambda \, \text{,}
\end{align*}
or equivalently as the singular vector decomposition problem of the gradient operator $\nabla$.
Due to the compactness of the inverse Laplacian, the spectrum is discrete, i.e. there are countably many different singular values.
\end{exm}

\begin{figure}[!ht]
\begin{center}
\includegraphics[scale=0.4]{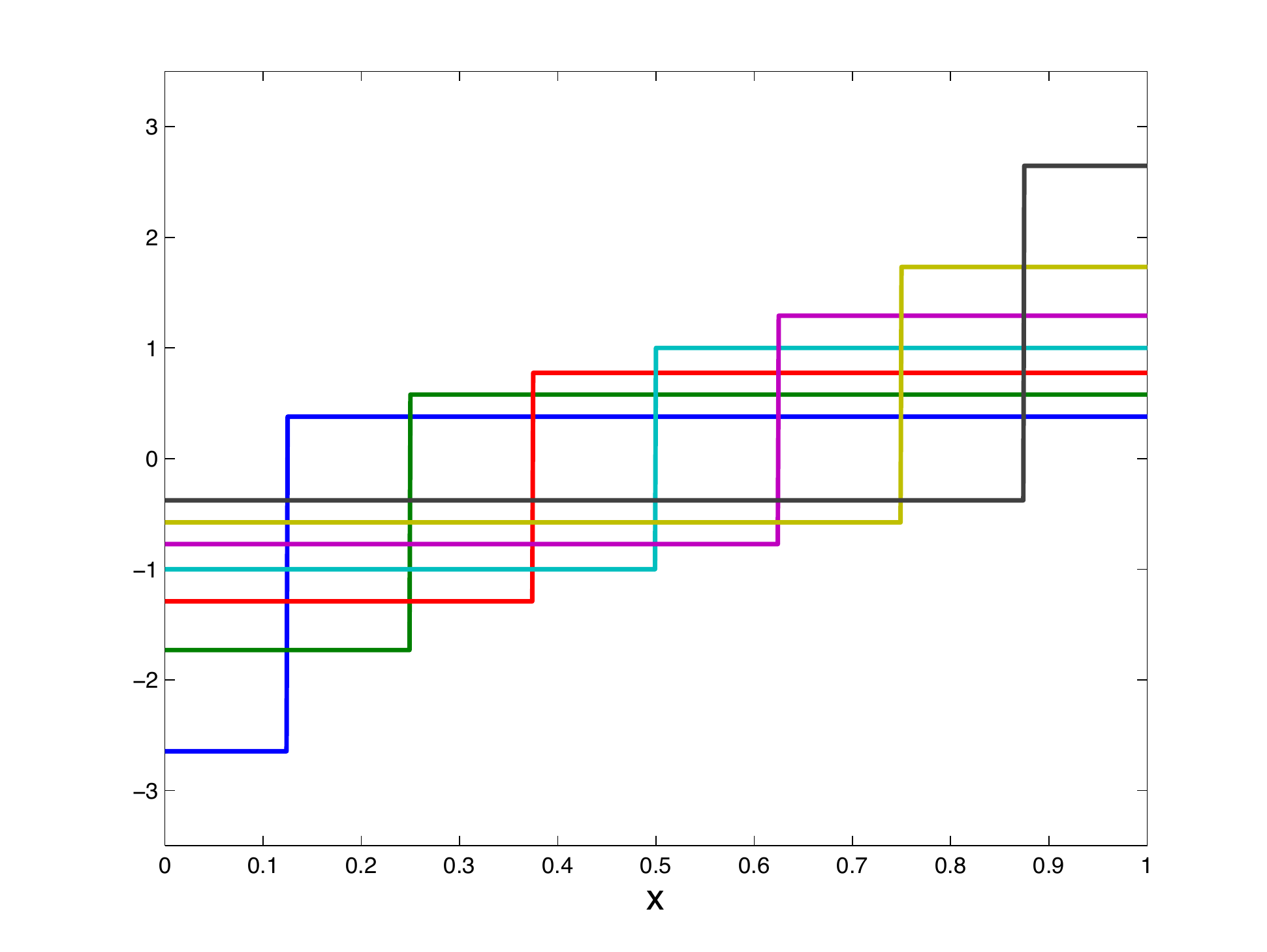}
\end{center}
\caption{The functions $u^a$ plotted for the values $a \in \{1/8, 1/4, 3/8, 1/2, 5/8, 3/4, 7/8\}$. All of these functions are singular values according to Definition \ref{def:singvec}, but only $u^{1/2}$ is a ground state.}
\label{fig:1dtvsv}
\end{figure}

The structure of the spectrum, by which we formally denote the set of singular values, changes if we consider more degenerate cases as the total variation frequently used in inverse problems and imaging:

\begin{exm}\label{exm:singvec}\normalfont
Let us now consider $K$ being the embedding operator from $\bv$ to $L^2(\Omega)$ on the unit interval
$\Omega = [0,1]$. For $a\in (0,1)$  the function $u^a$ defined by \eqref{uadefinition} is a singular vector of $\tv$, with singular value $\lambda = 1/\sqrt{(1 - a)a}$, as we shall see in the following:
We can characterize the subdifferential of $\tv$ as
\begin{align}
\partial \tv(u) = \left\{ \div \varphi ~ \left| ~ \left\| \varphi \right\|_{L^\infty(\Omega;\mathbb{R}^n)} \leq 1, \left. \varphi \cdot n \right|_{\partial \Omega}=0, \langle \div \varphi, u \rangle_{L^2(\Omega)} = \tv(u) \right. \right\} \, \text{,}
\end{align}
for $u \in \bv$. We see that the distributional derivative of the continuous function $q^a:[0, 1] \rightarrow [-1, 0]$ defined as
\begin{align*}
q^a(x) := \begin{cases} \frac{x - 1}{1 - a} & x \geq a\\ - \frac{x}{a} & x < a \end{cases} \, \text{,}
\end{align*}
for $a \in ]0, 1[$, is an element of $\partial\tv(u^a)$, since we have $\| q^a
\|_{L^\infty([0, 1])} = 1$, $q^a(0) = q^a(1) = 0$ and $\langle
\left(q^a\right)^\prime, u^a \rangle_{L^2([0, 1])} = \tv(u^a) = \lambda$ (here
the derivative has to be considered in a distributional way). Moreover, the
distributional derivative of $q^a$ satisfies the singular vector relation $\lambda u^a = \left(q^a\right)^\prime \in \partial\tv(u^a)$, for $\lambda = 1/\sqrt{(1 - a)a}$. Hence, $u^a$ is a singular vector of $\tv$. The singular vector is visualized for various choices of $a$ in Figure \ref{fig:1dtvsv}
Now we see that there exists a continuous spectrum $\left[2,\infty\right)$ although in spatial dimension one the embedding operator is compact. 
\end{exm}
\noindent We are going to consider various further examples in
Section \ref{sec:exm}.

\subsection{Failure of the Rayleigh Principle}

Above we have defined the ground state by a Rayleigh principle, i.e. minimizing $J$ with respect to normalization and orthogonality to the kernel. Again in the linear case, further singular vectors can be obtained by similar Rayleigh principles, e.g. by minimizing $J(u)$ subject to $u \in \ker(J)^\bot$, $\Vert u \Vert_K = 1$ and $\langle u, u_0 \rangle_K = 0. $ We have already seen from Example \ref{exm:singvec} that in the nonlinear case we will not be able to compute all singular values and singular vectors this way. However, it seems at least interesting whether we can obtain an orthonormal basis of singular vectors (with orthogonality defined in the $K$-scalar product, assuming for the moment that $K$ has trivial nullspace). As we shall demonstrate in the following, this is not possible in general. 

Let us first discuss formally why the Rayleigh principle can fail to yield singular vectors in the nonlinear case. 
Assume for this sake we have a system of orthonormal singular vectors $u_j$, $j=0,\ldots,n$, i.e.
$$ \lambda_j K^*Ku_{\lambda_j} = p_{\lambda_j} \in \partial J(u_{\lambda_j}), \qquad \langle u_{\lambda_j}, u_{\lambda_k} \rangle_K = \delta_{jk} . $$
Then we define
\begin{equation}
	u_{\lambda_{n + 1}} \in \argmin_{u \in {\cal C}_n} J(u),
\end{equation}
with the constraint set 
\begin{equation}
	{\cal C}_n := \left\{~u \in \umc~\left|~\Vert u \Vert_K = 1, \langle u_{\lambda_j}, u \rangle_K = 0, j=0,\ldots,n~\right.\right\}.
\end{equation}
Note that the existence of $u_{\lambda_{n + 1}}$ follows under the same conditions as the existence of a ground state if in addition ${\cal C}_n$ is nonempty (which is always the case in infinite dimensions).
Again we may set up the Lagrange functional
\begin{equation}
	L = J(u) - \frac{\lambda}2 \left( \Vert u \Vert_K^2 - 1\right) - \sum_{j=0}^n \mu_j \langle u_{\lambda_j}, u \rangle_K
\end{equation}
and thus obtain the optimality condition as the variation with respect to $u$ via 
\begin{equation}
	 \lambda K^* K u_{\lambda_{n + 1}} + \sum_{j=0}^n \mu_j K^* K u_{\lambda_j} = p_{\lambda_{n + 1}} \in \partial J(u_{\lambda_{n + 1}}). 
\end{equation}
Now $u_{\lambda_{n + 1}}$ is a singular vector if and only if $\mu_j=0$ for $j=1,\ldots,n$. To understand the latter we take a product with $u_{\lambda_{k}}$, $k \in \{0,\ldots,n\}$, and obtain
$$ \mu_k = \langle p_{\lambda_{n + 1}}, u_{\lambda_k} \rangle_{\umc}, $$ 
due to the orthonormality. Now there is no particular reason why $\langle p_{\lambda_{n + 1}}, u_{\lambda_k} \rangle_{\umc} = 0$ should hold, since we cannot use the usual argument as in the linear case, namely a simple eigenvalue equation for $u_{\lambda_k}$. 

We finish this section with a simple example that explicitly shows the failure of the Rayleigh principle:
\begin{exm}\normalfont
Let $\umc=\ell^1(\R^2)$ and $\hmc = \ell^2(\R^2)$, $J(u)=\Vert u \Vert_1$, and choose $K$ such that
\begin{equation}
	K^*K = \left( \begin{array}{cc} 1 & 2\epsilon \\ 2 \epsilon & \epsilon \end{array} \right)
\end{equation}
for $0 < \epsilon < \frac{1}4$. Then it is straight-forward to see that the ground state is given by 
$u_0=\pm(1,0)^T$ and hence the only choice for $u_1$ orthogonal to $u_0$ is given by $u_1 = \pm (0,1)^T$. Without restriction of generality we can use the one with positive sign and verify that it is not a singular vector. We find
$$ \lambda K^*K u_1 = \lambda (2\epsilon,\epsilon)^T. $$ 
Now $\lambda K^*K u_1 \in \partial \Vert u_1 \Vert_1$ if and only if its second entry equals one and the absolute value of the first entry is less or equal one. We thus find the conditions $\lambda \epsilon = 1$ and $2 \lambda \epsilon \leq 1$, which cannot be satisfied, implying that $u_1$ is not a singular vector. 
\end{exm}

From the last example we see that it is not possible to construct an orthonormal basis of singular vectors starting from the ground state in general. However, in certain cases this may still be possible, as we shall see also in a surprising example in the next section.

%
%

\section{Scales and Scale Estimates}

In the linear singular value decomposition, i.e. $\lambda K^* K u = \frac{u}{\Vert u\Vert_K}$ in our setup, the singular vectors and singular values carry information about scale. The usual definition of scale respectively frequency is related to the value of $\lambda$. In analogy to the eigenvalues of the Laplace operator mentioned above, small $\lambda$ means small frequency respectively large scale, and the scale information is carried by the singular vectors. For increasing $\lambda$ the frequency is increasing, respectively the scale is decreasing. It turns out that the interpretation of scale is even more striking in some nonlinear regularization cases, in particular for the ROF denoising model discussed in Example
\ref{exm:singvec}. Here, intuitively the ground state is the largest scale, since it includes two plateaus of size $\frac{1}2$. For other values of $a$ we have $\lambda = \frac{1}{a(1-a)}$, i.e. decreasing scale the farther $a$ is away from $\frac{1}2$. This is quite intuitive as well, since the singular value has plateaus of length $a$ or $1-a$ and the size of the smaller plateau is decreasing in scale as $\lambda$ is increasing. As we shall see in the following, we can use an appropriate selection of singular vectors of the ROF problem to obtain a standard multiscale representation.

\subsection{Scales in Total Variation and the Haar Wavelet}

The probably most frequently studied and best understood multiscale decomposition is the one of signals in the Haar wavelet basis, given on the unit interval by $\psi_{0,0}(x) = \chi(x)$ and 
\begin{equation}
	\psi_{j,k}(x)=2^{(j-1)/2}(\chi(2^jx-2k)-\chi(2^jx-2k+1)) , \qquad k=0,\ldots,2^{j-1}-1, \ j=1,2,\ldots 
\end{equation}
with the scale function $\chi$ being the characteristic function of the unit interval, i.e.
\begin{equation}
	\chi(x) = \left\{ \begin{array}{ll}1 &\text{if } x\in [0,1] \\ 0 & \text{else} \end{array} \right. \, \text{.}
\end{equation}
It is a kind of folklore that the Haar wavelet decomposition is closely related to total variation methods in spatial dimension one (respectively to anisotropic total variation in higher dimension), and rough connections between ROF denoising and filtering with Haar wavelets have been established (cf. \cite{cai,kamilov,steidl}). Here we shall provide a more explicit connection between the Haar wavelet basis and singular vectors of the ROF functional. For this sake we use a slightly nonstandard definition of the total variation in the form
\begin{align}
\tv_*(u) := \sup_{\substack{\varphi \in C^\infty(\Omega; \R^n) \\ \|
\varphi \|_{L^\infty(\Omega; \R^n)} \leq 1}} \int_\Omega u ~\div \varphi ~dx  \,
\text{.}
\end{align}
Note that in contrast to \eqref{eq:totalvar} we do not choose test functions $\varphi$ with compact support, which yields an additional boundary term. For functions $u \in W^{1,1}([0,1])\cap C([0,1])$ it is straight-forward to see that 
\begin{equation}
	\tv_*(u) = \int_0^1 \left|u^\prime(x)\right|~dx + |u(1)| + |u(0)|.
\end{equation}
This in particular eliminates the kernel of the total variation, so that the ground state is indeed a constant $u_0 \equiv 1$ with $\lambda_0  = \tv_*(u_0)=2$. Then our main result is the following:

\begin{thm}
Let $K:\text{BV}([0,1])\rightarrow L^2([0,1])$ be the embedding operator and let $J=\tv_*$. Then the Haar wavelet basis is an orthonormal set of singular vectors for $K$ and $J$, i.e.
\begin{equation}
	\lambda_{j,k} \psi_{j,k} \in \partial \tv_*(\psi_{j,k}) 
\end{equation}
with singular value $\lambda_{j,k}=2^{(j+3)/2}$ for $j\geq 1$ and $\lambda_0=2$. In particular $u_0$ is a ground state.
\end{thm}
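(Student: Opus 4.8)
The plan is to handle the two assertions of the theorem separately, the orthonormality part being essentially trivial. Since $K$ is the embedding $\text{BV}([0,1])\hookrightarrow L^2([0,1])$ we have $\langle u,v\rangle_K=\langle Ku,Kv\rangle_{L^2([0,1])}=\langle u,v\rangle_{L^2([0,1])}$, so that $K$-orthonormality of $\{\psi_{j,k}\}$ is just the classical $L^2$-orthonormality of the Haar system; in particular $\|K\psi_{j,k}\|_{L^2([0,1])}=1$, which is the normalization demanded in Definition~\ref{def:singvec}. The substance of the theorem is therefore the inclusion $\lambda_{j,k}\psi_{j,k}\in\partial\tv_*(\psi_{j,k})$. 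Using $\langle K^{*}K\psi_{j,k},w\rangle_{\umc}=\langle\psi_{j,k},w\rangle_{L^2([0,1])}$ together with the one-homogeneous characterization \eqref{eq:onehomosubdiff}, this is equivalent to (i) $\lambda_{j,k}=\tv_*(\psi_{j,k})$ and (ii) $\lambda_{j,k}\langle\psi_{j,k},w\rangle_{L^2([0,1])}\le\tv_*(w)$ for every $w\in\text{BV}([0,1])$. I would prove both by exhibiting an explicit potential, in the spirit of Example~\ref{exm:singvec} but adapted to $\tv_*$: since the test functions defining $\tv_*$ need not have compact support, the relevant subdifferential description is $\partial\tv_*(u)=\{\,\div\varphi\ :\ \|\varphi\|_{L^\infty([0,1])}\le1,\ \langle\div\varphi,u\rangle_{L^2([0,1])}=\tv_*(u)\,\}$, i.e. the boundary condition $\varphi\cdot n|_{\partial\Omega}=0$ appearing in Example~\ref{exm:singvec} is dropped.

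For step (i) I would compute $\tv_*(\psi_{j,k})$ directly from $\tv_*(w)=\int_0^1|w'|\,dx+|w(0)|+|w(1)|$. For $j\ge1$ the wavelet $\psi_{j,k}$ is piecewise constant with three transitions of heights $2^{(j-1)/2}$, $2\cdot2^{(j-1)/2}$, $2^{(j-1)/2}$, where for the two level-$j$ wavelets touching $\partial\Omega$ the outer transitions are realized through the boundary terms $|w(0)|$ respectively $|w(1)|$; this is precisely why $\tv_*$ rather than $\tv$ is used, as it makes all level-$j$ wavelets behave identically. This yields $\tv_*(\psi_{j,k})=4\cdot2^{(j-1)/2}=2^{(j+3)/2}=\lambda_{j,k}$.

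For step (ii), and simultaneously a second derivation of (i) through the saturation condition, I would set $\varphi_{j,k}(x):=\lambda_{j,k}\int_0^x\psi_{j,k}(t)\,dt-1$. The antiderivative $\int_0^x\psi_{j,k}$ is a continuous piecewise-linear tent taking values in $[0,2^{-(j+1)/2}]$, hence $\lambda_{j,k}\int_0^x\psi_{j,k}\in[0,2]$ and $\|\varphi_{j,k}\|_{L^\infty([0,1])}=1$; since $\varphi_{j,k}$ is continuous its distributional derivative has no atomic part and equals the slope function $\lambda_{j,k}\psi_{j,k}$, so $\div\varphi_{j,k}=\lambda_{j,k}\psi_{j,k}=\lambda_{j,k}K^{*}K\psi_{j,k}$ as an element of $\umc^{*}$; finally $\langle\div\varphi_{j,k},\psi_{j,k}\rangle_{L^2([0,1])}=\lambda_{j,k}\|\psi_{j,k}\|_{L^2([0,1])}^{2}=\lambda_{j,k}=\tv_*(\psi_{j,k})$, so the saturation condition holds. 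After mollifying $\varphi_{j,k}$ into admissible $C^\infty$ test functions with unchanged $L^\infty$-bound, $\langle\div\varphi_{j,k},w\rangle_{L^2([0,1])}\le\tv_*(w)$ for all $w$ is immediate from the definition of $\tv_*$, which is (ii). The case $j=0$ uses the same recipe: $\varphi_{0,0}(x)=2x-1$ has $\|\varphi_{0,0}\|_{L^\infty([0,1])}=1$, $\div\varphi_{0,0}=2$, and $\langle2,\psi_{0,0}\rangle_{L^2([0,1])}=2=\tv_*(\psi_{0,0})$, so $\psi_{0,0}\equiv1$ is a singular vector with $\lambda_0=2$. To see it is a ground state in the sense of Definition~\ref{defi:gs}, note that $\ker(\tv_*)=\{0\}$ (because $\tv_*(c)=2|c|$), so $\ker(\tv_*)^{\bot}=\text{BV}([0,1])$, and use the sharp Poincar\'e-type bound $\|w\|_{L^2([0,1])}\le\|w\|_{L^\infty([0,1])}\le\tfrac12\tv_*(w)$, which follows by averaging $|w(x)|\le|w(0)|+\int_0^x|w'|$ and $|w(x)|\le|w(1)|+\int_x^1|w'|$ to get $|w(x)|\le\tfrac12(|w(0)|+|w(1)|+\int_0^1|w'|)=\tfrac12\tv_*(w)$ for all $x$; hence $\tv_*(w)\ge2$ under the normalization, with equality for $w\equiv1$.

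The routine ingredients are the piecewise-linear bookkeeping and the mollification step linking the Lipschitz potentials $\varphi_{j,k}$ to the $C^\infty$ test functions in the definition of $\tv_*$. The step that needs the most care is the boundary accounting — confirming that the two level-$j$ wavelets meeting $\partial\Omega$ really do have $\tv_*$-value $2^{(j+3)/2}$ and that the pairing $\langle\div\varphi_{j,k},\cdot\rangle_{L^2([0,1])}$ picks up no spurious boundary mass — together with making precise the $\tv_*$-analogue of the subdifferential characterization used in Example~\ref{exm:singvec}.
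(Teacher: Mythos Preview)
Your proposal is correct and follows essentially the same route as the paper: both compute $\tv_*(\psi_{j,k})=2^{(j+3)/2}$, construct the primitive $q$ with $q'= \lambda_{j,k}\psi_{j,k}$ and $q(0)=-1$, verify $\|q\|_\infty=1$, and then use the definition of $\tv_*$ together with a density/mollification argument to obtain $\langle p_{j,k},u\rangle\le\tv_*(u)$. Your ground-state argument via the pointwise bound $|w(x)|\le\tfrac12\tv_*(w)$ is the same inequality the paper derives by picking $x_0$ with $|u(x_0)|\ge 1$ and estimating $\tv_*(u)\ge 2|u(x_0)|$, just read in the other direction.
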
 
\begin{proof}
We first show that $u_0=\psi_{0,0}$ is a ground state. Take an arbitrary continuously differentiable function $u$ satisfying the normalization condition $\int_0^1 u^2~dx = 1.$ Then there exists $x_0 \in [0,1]$ such that $|u(x_0)| \geq 1$. Thus, by the triangle inequality
$$ \tv_*(u) \geq |u(1)-u(x_0)|  + |u(x_0)-u(0)| + |u(1)| + |u(0)| \geq 2|u(x_0)| \geq 2 $$
holds. Since $C^1([0,1])$ is dense in BV$([0,1])$ we conclude that the infimum over $\tv_*$ over all functions of bounded variation with normalized $L^2$-norm is $2$. Since $\tv_*(u_0)=2$ and $\Vert u_0 \Vert_{L^2([0, 1])} = 1$, it is a ground state. 

We further need to verify that $p_{j,k}=\lambda_{j,k}\psi_{j,k}$ is a subgradient of the one-homogeneous functional $\tv_*$ at $\psi_{j,k}$, for $j\geq 1$. 
For this sake we first compute $\tv_*(\psi_{j,k})=2^{(j-1)/2} 4 = 2^{(j+3)/2}$ and immediately see that
$$ \langle p_{j,k} , \psi_{j,k} \rangle = \lambda_{j,k} \int_0^1 |\psi_{j,k}|^2~dx = 2^{(j+3)/2} . $$
The remaining step is to prove that 
$$ \langle p_{j,k} , u \rangle \leq \tv_*(u)$$
for arbitrary $u \in BV([0,1])$. For this sake we consider the primitive $q$ satisfying $q^\prime=p_{j,k}$ and
$q(0)=-1$. We observe that indeed $q$ attains its maxima and minima on the jump set of $p_{j,k}$ and they equal $+1$ respectively $-1$. Hence, $\Vert q \Vert_\infty = 1$ and we find
$$ \langle p_{j,k} , u \rangle  = \int_0^1 q^\prime(x)u(x)~dx \leq \sup_{\substack{\varphi \in W^{1,2}([0,1]; \R) \\ \|
\varphi \|_{L^\infty([0,1])} \leq 1}} \int_\Omega u ~\varphi^\prime ~dx. $$
Finally, by a density argument we conclude that this supremum equals the one over $C^\infty$, hence
$$ \langle p_{j,k} , u \rangle \leq  \sup_{\substack{\varphi \in C^\infty([0,1]; \R) \\ \|
\varphi \|_{L^\infty([0,1])} \leq 1}} \int_\Omega u ~\varphi^\prime ~dx = \tv_*(u). $$
\end{proof}

\subsection{Scale Estimates}

In the following we want to demonstrate how singular vectors can be used to derive sharp error estimates on the components at different scales without any additional prior knowledge on the solution, as e.g. the assumption of specific source conditions (cf. \cite{BO04,hofmann}). We will make the connection to scale explicit by again considering the case of the ROF-model and the corresponding singular vectors.

Let us assume we are given a singular vector $u_\lambda$ with singular value
$\lambda$ and dual singular vector $p_\lambda$, i.e. 
$$\lambda K^{*}Ku_\lambda = p_\lambda \in \partial J(u_\lambda)$$ 
for $\|Ku\|_{\hmc} = 1$.
Then we can estimate solutions of \eqref{eq:varframe} for input data given in
terms of $f = K\tilde{u} + \eta$, with $\tilde{u} \in \dom(K)\cap\dom(J)$ and $\eta \in
\hmc$, with respect to this particular singular vector.
\begin{thm}\label{thm:errest}
For input data $f = K\tilde{u} + \eta$, $\tilde{u} \in \dom(K)$
and $\eta \in \hmc$, the solution $\hat{u}$ of \eqref{eq:varframe} satisfies the
estimate
\begin{align*}
\left| \left\langle \frac{1}{\lambda} p_\lambda, \hat{u} - \tilde{u}
\right\rangle_{\umc} \right| \leq \left| \left\langle \eta,
Ku_\lambda\right\rangle_{\hmc} \right| + \alpha \left| \left\langle \hat{p},
u_\lambda \right\rangle_{\umc} \right|  \, \text{.}
\end{align*}
\begin{proof}
The estimate is simply a consequence of taking a dual product of the optimality condition of \eqref{eq:varframe} with $u_\lambda$. The optimality condition reads as
\begin{align*}
K^{*}K(\hat{u} - \tilde{u}) + \alpha \hat{p} &= K^{*}\eta \, \text{,}
\end{align*}
for $\hat{p} \in \partial J(\hat{u})$. Taking the dual product with $u_\lambda$ yields
\begin{align*}
\langle K^{*}Ku_\lambda , \hat{u} - \tilde{u} \rangle_{\umc} = \langle
K^{*}\eta - \alpha \hat{p}, u_\lambda \rangle_{\umc} \, \text{,}
\end{align*}
which is equivalent to
\begin{align*}
\left\langle \frac{1}{\lambda} p_\lambda , \hat{u} - \tilde{u}
\right\rangle_{\umc} = \langle K^{*}\eta - \alpha \hat{p}, u_\lambda
\rangle_{\umc} \, \text{.}
\end{align*}
The estimate for the absolute value of the right-hand side simply follows from the triangular inequality.
\end{proof}
\end{thm}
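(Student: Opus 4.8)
The plan is to reduce the estimate to the first-order optimality condition of \eqref{eq:varframe}, tested against the singular vector $u_\lambda$. Since the objective $u \mapsto \frac12\|Ku-f\|_{\hmc}^2 + \alpha J(u)$ is convex and its quadratic term is finite and continuous on all of $\umc$, the subdifferential sum rule applies, so $\hat u$ is a minimizer of \eqref{eq:varframe} if and only if there exists $\hat p \in \partial J(\hat u)$ with $K^*(K\hat u - f) + \alpha\hat p = 0$ in $\umc^*$. Substituting $f = K\tilde u + \eta$ rewrites this as
\begin{align*}
K^*K(\hat u - \tilde u) + \alpha \hat p = K^*\eta .
\end{align*}

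Next I would pair this identity with $u_\lambda \in \umc$. By the definition of the adjoint, $\langle K^*K(\hat u - \tilde u), u_\lambda\rangle_{\umc} = \langle \hat u - \tilde u, K^*Ku_\lambda\rangle_{\umc}$ and $\langle K^*\eta, u_\lambda\rangle_{\umc} = \langle \eta, Ku_\lambda\rangle_{\hmc}$, while the singular vector relation \eqref{eq:svdefi} (with $\lambda \neq 0$, as is implicit in the statement where $\tfrac1\lambda p_\lambda$ appears) gives $K^*Ku_\lambda = \tfrac1\lambda p_\lambda$. Hence
\begin{align*}
\left\langle \tfrac1\lambda p_\lambda, \hat u - \tilde u\right\rangle_{\umc} + \alpha\langle \hat p, u_\lambda\rangle_{\umc} = \langle \eta, Ku_\lambda\rangle_{\hmc} .
\end{align*}
Isolating the first term on the left, then taking absolute values and applying the triangle inequality to the right-hand side, yields precisely the claimed bound.

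The step that most deserves care — the closest thing to an obstacle in an otherwise one-line computation — is the justification of the optimality system itself, i.e.\ that $\hat u$ genuinely satisfies $K^*(K\hat u - f) + \alpha \hat p = 0$ for some $\hat p \in \partial J(\hat u)$; this rests on the subdifferential sum rule, which here holds because the quadratic fidelity term is finite and continuous on all of $\umc$, so that no source condition or further constraint qualification on $\tilde u$ is required. The remaining manipulations — self-adjointness of $K^*K$, the bookkeeping between the dual pairing $\langle\cdot,\cdot\rangle_{\umc}$ on $\umc^*\times\umc$ and the Hilbert inner product $\langle\cdot,\cdot\rangle_{\hmc}$, the identification $\tfrac1\lambda p_\lambda = K^*Ku_\lambda$ from \eqref{eq:svdefi}, and the final triangle inequality — are routine.
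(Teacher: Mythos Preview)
Your proof is correct and follows essentially the same approach as the paper: derive the optimality condition $K^{*}K(\hat u-\tilde u)+\alpha\hat p=K^{*}\eta$, pair it with $u_\lambda$, use $K^{*}Ku_\lambda=\tfrac1\lambda p_\lambda$, and apply the triangle inequality. Your additional remarks on the subdifferential sum rule justifying the optimality system are a nice explicit touch that the paper leaves implicit.
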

\begin{rem}\normalfont
In case of $J$ being one-homogeneous we have $\left| \left\langle \hat{p},
u_\lambda \right\rangle_{\umc} \right| \leq J(u_\lambda) = \lambda$ and the
estimate therefore can be used to deduce
\begin{align*}
\left| \left\langle \frac{1}{\lambda} p_\lambda, \hat{u} - \tilde{u}
\right\rangle_{\umc} \right| \leq \alpha \lambda + \left| \left\langle \eta,
Ku_\lambda\right\rangle_{\hmc} \right| \, \text{.}
\end{align*}
The two terms in the error estimate can be easily interpreted. The term $\alpha \lambda$ is a worst-case estimate for the bias at the scale of the used singular value, while the second term describes the impact of noise on a specific scale. Note that the scalar product of the noise with $Ku_\lambda$ implies actually some averaging of the noise, which usually reduces the noise variance stronger on larger scales than on smaller ones. This can be analyzed in particular for statistical noise models such as additive Gaussian noise.  
\end{rem}
In the following we want to demonstrate how Theorem \ref{thm:errest} can be used to derive estimates for the difference of the reconstruction $\hat{u}$ and the input data $\tilde{u}$ on different scales.
\begin{exm}\normalfont
With this first example we want to find a worst-case estimate for a
reconstruction $\hat{u}$ of \eqref{eq:rof}, with respect to the input data $f = \tilde{u} + \eta$ on the scale
$[0, a] \subset [0, 1]$, for $0 < a < 1$. We want to point out that the ROF
model is mean-value preserving, i.e. $\int_0^1
(\hat{u} - \tilde{u}) ~dx = 0$. Let us investigate the integral equation
\begin{align}
\int_0^a (\hat{u} - \tilde{u}) ~dx &= c_0 \int_0^1 u_0 \left( \hat{u} - \tilde{u} \right) ~dx + c_1 \int_0^1 u_1 \left( \hat{u} - \tilde{u} \right) ~dx\,\text{,}\nonumber\\
 &= \int_0^1 \left(c_0  u_0 + c_1 u_1\right)\left( \hat{u} - \tilde{u} \right) ~dx\,\text{,}\nonumber\\
 &= \int_0^a \left(c_0  u_0 + c_1 u_1\right)\left( \hat{u} - \tilde{u} \right) ~dx + \int_a^1 \left(c_0  u_0 + c_1 u_1\right)\left( \hat{u} - \tilde{u} \right) ~dx \, \text{,}\label{eq:intrel}
\end{align}
with the function $u_0$ defined as $u_0(x) \equiv 1$, and with $u_1(x) := u^a(x)$ being the singular vector of Example \ref{exm:singvec}. Then, \eqref{eq:intrel} reads as
\begin{align}
\int_0^a (\hat{u} - \tilde{u}) ~dx = \left(c_0 - \sqrt{\frac{1-a}{a}}c_1\right) \int_0^a (\hat{u} - \tilde{u}) ~dx + \left(c_0 + \sqrt{\frac{a}{1-a}}c_1\right) \int_a^1 ( \hat{u} - \tilde{u} ) ~dx\, \text{.}\label{eq:intrellse}
\end{align}
In order to satisfy \eqref{eq:intrellse}, the coefficients $c_0$ and $c_1$ have to be chosen such that they solve the linear system of equations
\begin{equation}
\left(
\begin{array}{cc}
1 & - \sqrt{\frac{1-a}{a}}\\
1 & \sqrt{\frac{a}{1-a}}
\end{array}
\right) \left( \begin{array}{c}
c_0\\
c_1
\end{array} \right) = \left( \begin{array}{c}
1\\
0
\end{array} \right) \, \text{.} \label{eq:lse}
\end{equation}
It is easy to see that $c_0 = a$ and $c_1 = - \sqrt{a(1 - a)}$ solve \eqref{eq:lse}. Thus, we obtain the following scale estimate
\begin{align*}
\left| \int_0^a (\hat{u} - \tilde{u}) ~dx \right| &= \left|a \underbrace{\int_0^1 \hat{u} - \tilde{u}~dx}_{= 0} -\sqrt{a(1 - a)} \int_0^1 u_1 \left(\hat{u} - \tilde{u}\right)~dx \right| \, \text{,}\\
&= \sqrt{a(1 - a)} \left| \int_0^1 u_1 \left(\hat{u} - \tilde{u}\right)~dx \right| \, \text{,}\\
&\leq \sqrt{a(1 - a)} \left( \frac{\alpha}{\sqrt{a(1 - a)}} + \left| \int_0^1 \eta ~u_1 ~dx \right| \right) \, \text{,}\\
&= \left(\alpha + \sqrt{a(1 - a)} \left| \int_0^1 \eta ~u_1 ~dx \right| \right)
\, \text{,}
\end{align*}
since $1/\sqrt{a(1 - a)}$ is the singular value of $u_1$. Note that for $\eta = 0$ we simply obtain
\begin{align*}
\left| \int_0^a ( \hat{u} - \tilde{u} ) ~dx \right| \leq \alpha \, \text{.}
\end{align*}
\begin{figure}[!ht]
\begin{center}
\includegraphics[width=0.6\textwidth]{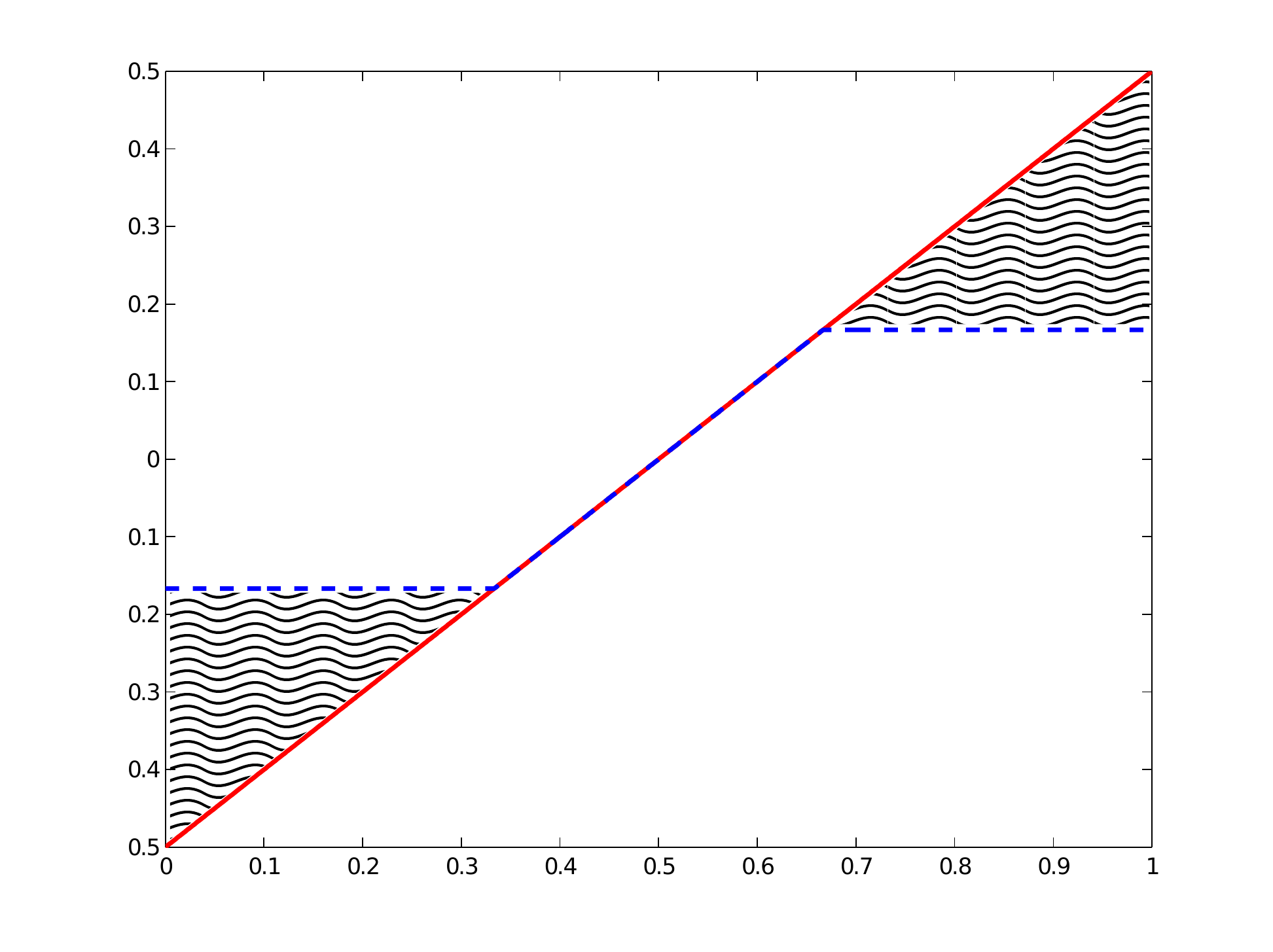}
\end{center}
\caption{The function $\tilde{u}(x) = x - 1/2$ (solid red line) and the solution of \eqref{eq:rof} for input data given in terms of $f = \tilde{u}$ and $\alpha = 1/18$ (dashed blue line). The areas indicated by wavy lines equal $-1/18$ and $1/18$, respectively.}
\label{fig:scalest}
\end{figure}
This estimate is obviously not sharp if $a \rightarrow 1$, since $\int_0^1
\hat{u} - \tilde{u} ~dx = 0$, but gives a worst case estimate if $a$ is far away
from the boundary as in case of $a = 1/2$. If we consider e.g. $\tilde{u}(x) = x
- 1/2$, the estimate guarantees that the area  visualized in Figure \ref{fig:scalest}
is equal or smaller than $\alpha$. Indeed the area equals $\alpha$, which
becomes clear by computing the exact solution of \eqref{eq:rof} for $f(x) =
\tilde{u}(x) = x - 1/2$. For $0 < \alpha < 1/8$ the solution
satisfies
\begin{align*}
\hat{u}(x) = \begin{cases} \sqrt{2\alpha} - \frac{1}{2} & x \in \left[0,
\sqrt{2\alpha}\right[\\ x - \frac{1}{2} & x \in \left[\sqrt{2\alpha}, 1 - \sqrt{2\alpha}\right]\\
\frac{1}{2} - \sqrt{2\alpha} & x \in \left]1 - \sqrt{2\alpha}, 1\right]
\end{cases} \, \text{,}
\end{align*}
and thus, $\int_0^{1/2} (\hat{u} - \tilde{u}) ~dx = - \int_{1/2}^1 (\hat{u} -
\tilde{u} )~dx = \alpha$ holds true.
\end{exm}
\begin{exm}\normalfont
As a second example we want to focus on a scale estimate on the scale $[a, b]
\subset [0, 1]$, for $0 < a < b < 1$. Again we study \eqref{eq:rof} and, in analogy to the previous example, consider
\begin{align}
\int_a^b \hat{u} - \tilde{u} ~dx = \int_0^1 \left( c_0 u_0 + c_1 u_1 + c_2 u_2 \right) \left(\hat{u} - \tilde{u}\right) ~dx\, \text{,}\label{eq:intrel2}
\end{align}
with $u_0(x) \equiv 1$, $u_1(x) := u^a(x)$ and $u_2(x) := -u^b(x)$, for $u^a$ and $u^b$ being singular vectors as defined in Example \ref{exm:singvec}. Thus, we can rewrite \eqref{eq:intrel2} to
\begin{align*}
\int_a^b (\hat{u} - \tilde{u}) ~dx {} = {} &\int_0^a \left( c_0 u_0 + c_1 u_1 +
c_2 u_2 \right) \left(\hat{u} - \tilde{u}\right) ~dx ~+\\
&\int_a^b \left( c_0 u_0 + c_1 u_1 + c_2 u_2 \right) \left(\hat{u} -
\tilde{u}\right) ~dx ~+\\
&\int_b^1 \left( c_0 u_0 + c_1 u_1 + c_2 u_2 \right) \left(\hat{u} - \tilde{u}\right) ~dx \\
{} = {} &\left( c_0 - \sqrt{\frac{1-a}{a}}c_1 + \sqrt{\frac{1-b}{b}}c_2\right)
\int_0^a (\hat{u} - \tilde{u} )~dx ~+\\
& \left( c_0 + \sqrt{\frac{a}{1-a}}c_1 + \sqrt{\frac{1-b}{b}}c_2\right) \int_a^b
(\hat{u} - \tilde{u})~dx ~+\\
& \left( c_0 + \sqrt{\frac{a}{1-a}}c_1 - \sqrt{\frac{b}{1-b}}c_2\right) \int_b^1 (\hat{u} - \tilde{u}) ~dx \, \text{.}
\end{align*}
Similar to the previous example we therefore have to make sure that $c_0$, $c_1$ and $c_2$ satisfy
\begin{equation}
\left( \begin{array}{ccc}
1 & - \sqrt{\frac{1-a}{a}} & \sqrt{\frac{1-b}{b}}\\
1 & \sqrt{\frac{a}{1-a}} & \sqrt{\frac{1-b}{b}}\\
1 & \sqrt{\frac{a}{1-a}} & - \sqrt{\frac{b}{1-b}}
\end{array} \right) \left( \begin{array}{c}
c_0\\
c_1\\
c_2
\end{array} \right) = \left( \begin{array}{c}
0\\
1\\
0
\end{array} \right) \, \text{.}
\label{eq:lse2}
\end{equation}
Solving \eqref{eq:lse2} for $c_0$, $c_1$ and $c_2$ yields $c_0 = b - a$, $c_1 = \sqrt{a\left(1-a\right)}$ and $c_2 = \sqrt{b\left(1-b\right)}$. Consequently, Theorem \ref{thm:errest} allows us to compute the estimate
\begin{align*}
\left| \int_a^b ( \hat{u} - \tilde{u} )~dx \right| {} \leq {} &(b - a) \left| \underbrace{\int_0^1 \hat{u} - \tilde{u} ~dx}_{= 0} \right| + \sqrt{a\left(1-a\right)} \left| \int_0^1 u_1 \left(\hat{u} - \tilde{u}\right) ~dx \right|\\
 &+ \sqrt{b\left(1-b\right)} \left| \int_0^1 u_2 \left(\hat{u} - \tilde{u}\right) ~dx \right|\\
 {} \leq {} &2\alpha + \sqrt{a\left(1-a\right)} \left| \int_0^1 \eta ~u_1 ~dx
 \right| + \sqrt{b\left(1-b\right)} \left| \int_0^1 \eta ~u_2 ~dx \right| \, \text{.}
\end{align*}
In case of clean data, i.e. $\eta = 0$, we see that
\begin{align*}
\left| \int_a^b (\hat{u} - \tilde{u}) ~dx \right| \leq 2\alpha
\end{align*}
holds. It is remarkable that the worst-case estimate on the scale $[a, b]$ does not depend on the boundary values $a$ and $b$. However, if we want to estimate the mean value, then an additional factor $\frac{1}{b-a}$ comes up, which increases with decreasing size of the interval. 
\end{exm}

\section{Exact Reconstruction of Singular Vectors}\label{sec:exactrec}

In this section we discuss exact solutions of variational regularization methods, namely the recovery of singular vectors. We will relate this results to properties of Bregman distances. First of all, we want to recall a criterion that has been shown by Meyer in \cite{meyer} in order to derive trivial ground states for the ROF-model. These considerations can be generalized by the use of Bregman distances as it can be seen by the following theorem.
\begin{thm}\label{thm:meyercond}
Let $f$ be such that
\begin{align}
\frac{1}{\alpha} K^{*}f \in \partial J(0)\label{eq:zerocond}
\end{align}
is satisfied, then a minimizer of \eqref{eq:varframe} is given by $\hat{u} \equiv 0$. Vice versa, $\hat u \equiv 0$ is a minimizer of  \eqref{eq:varframe} only if \eqref{eq:zerocond} holds.
\begin{proof}
We can rewrite \eqref{eq:varframe} to
\begin{align*}
\hat{u} \in  \argmin_{u \in \dom(J)} \left\{\ltwon[\hmc]{1}{Ku} + \alpha \left(
J(u) - \dupr[\umc]{\frac{1}{\alpha}K^{*}f}{u} \right) +
\ltwon[\hmc]{1}{f}\right\}
\end{align*}
Since \eqref{eq:zerocond} is satisfied, we can define $q := (K^{*}f)/\alpha$ such that
\begin{align*}
D_J^q(u, 0) = J(u) - J(0) - \dupr[\umc]{q}{u}
\end{align*}
is a non-negative Bregman distance. Hence, ignoring the constant part $1/2
\|f\|_{\hmc}$ we have
\begin{align*}
\hat{u} \in  \argmin_{u \in \dom(J)} \left\{\ltwon[\hmc]{1}{Ku} + \alpha D_J^q(u,
0) \right\}
\end{align*}
for which the obvious minimizer is given via $\hat{u} = 0$, since both terms are non-negative and vanish for $u = 0$. It is straightforward to see the opposite condition from the optimality condition for $\hat{u}=0$.
\end{proof}
\end{thm}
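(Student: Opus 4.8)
The plan is to rewrite the objective in \eqref{eq:varframe} so that a Bregman distance with respect to the prescribed subgradient $q:=\tfrac1\alpha K^{*}f$ becomes visible, and then to read off both implications — from the non-negativity of that Bregman distance for the first, and from the first-order optimality condition at $0$ for the second. First I would expand the data term using the adjoint $K^{*}$ (available since $K$ is bounded, by Assumption \ref{defi:setup}),
\[
\tfrac12\|Ku-f\|^{2}_{\hmc}=\tfrac12\|Ku\|^{2}_{\hmc}-\langle u,K^{*}f\rangle_{\umc}+\tfrac12\|f\|^{2}_{\hmc},
\]
so that, up to the additive constant $\alpha J(0)+\tfrac12\|f\|^{2}_{\hmc}$, the functional in \eqref{eq:varframe} equals $\tfrac12\|Ku\|^{2}_{\hmc}+\alpha\big(J(u)-J(0)-\langle q,u\rangle_{\umc}\big)=\tfrac12\|Ku\|^{2}_{\hmc}+\alpha D_{J}^{q}(u,0)$.

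For the sufficiency direction I would invoke the hypothesis \eqref{eq:zerocond}, i.e.\ $q\in\partial J(0)$ (which in particular forces $J(0)<\infty$, so the constant above is finite). Then $D_{J}^{q}(\cdot,0)$ is a genuine Bregman distance, hence non-negative by convexity of $J$ and equal to $0$ at $u=0$; since $\tfrac12\|Ku\|^{2}_{\hmc}$ is likewise non-negative and vanishes at $u=0$, the rewritten functional is minimized by $\hat u=0$, so $\hat u\equiv0$ is a minimizer of \eqref{eq:varframe}.

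For the converse I would argue from the optimality condition. The objective of \eqref{eq:varframe} is convex, and its smooth part $u\mapsto\tfrac12\|Ku-f\|^{2}_{\hmc}$ is Fr\'echet differentiable with derivative $K^{*}(Ku-f)$; by the subdifferential sum rule (applicable because this quadratic term is finite and continuous everywhere) the subdifferential of the whole objective at $u$ is $K^{*}(Ku-f)+\alpha\,\partial J(u)$. Requiring that $0$ belong to this set at $\hat u=0$ yields $K^{*}f\in\alpha\,\partial J(0)$, which is precisely \eqref{eq:zerocond}. A sum-rule-free alternative is to test the minimality inequality $F(tv)\ge F(0)$ (with $F$ the objective of \eqref{eq:varframe}) as $t\downarrow0$ for arbitrary $v$, using convexity of $J$ to replace $\tfrac1t\big(J(tv)-J(0)\big)$ by its monotone limit, the one-sided directional derivative of $J$ at $0$, which equals $\sup_{p\in\partial J(0)}\langle p,v\rangle_{\umc}$; this gives $\langle K^{*}f,v\rangle_{\umc}\le\alpha\sup_{p\in\partial J(0)}\langle p,v\rangle_{\umc}$ for every $v$, hence $\tfrac1\alpha K^{*}f\in\partial J(0)$ by the weak-$*$ closedness and convexity of the subdifferential set.

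The algebraic rearrangement and the non-negativity of $D_{J}^{q}$ are entirely routine and rely only on convexity and Assumption \ref{defi:setup}; I expect the main (and still rather mild) obstacle to be the converse direction, specifically justifying the subdifferential sum rule in this Banach-space setting, or — in the elementary route — the identification of the directional derivative of $J$ at $0$ with the support function of $\partial J(0)$ together with the closedness of the subdifferential.
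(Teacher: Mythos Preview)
Your proof is correct and follows essentially the same route as the paper: expand the data term, recognize the Bregman distance $D_J^q(u,0)$ with $q=\tfrac1\alpha K^*f$, and conclude that both non-negative summands vanish at $u=0$; for the converse the paper simply invokes the optimality condition at $\hat u=0$, which is exactly your subdifferential sum-rule argument. Your additional care about $J(0)<\infty$ and the alternative directional-derivative route for the converse go beyond what the paper spells out, but neither changes the strategy.
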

\begin{rem}\normalfont
Note that if \eqref{eq:zerocond} is satisfied for a specific $\tilde{\alpha}$, then \eqref{eq:zerocond} is automatically
guaranteed for every $\alpha \geq \tilde{\alpha}$, since $(K^{*}f)/\tilde{\alpha} \in \partial J(0)$ implies
\begin{align*}
J(v) &\geq \dupr[\umc]{\frac{1}{\tilde{\alpha}} K^{*}f}{v} \, \text{,}
\intertext{for all $v \in \dom(J)$. If we multiply both sides of the inequality with $\tilde{\alpha}$ we obtain}
\tilde{\alpha} J(v) &\geq \dupr[\umc]{K^{*}f}{v} \, \text{,}
\intertext{since $\tilde{\alpha}$ is positive. Due to the positivity of $J$ we even have}
\alpha J(v) &\geq \tilde{\alpha} J(v)
\end{align*}
for all $v \in \dom(J)$ and $\alpha \geq \tilde{\alpha}$, and hence, \eqref{eq:zerocond} is guaranteed for all $\alpha \geq \tilde{\alpha}$.
\end{rem}
Theorem \ref{thm:meyercond} yields an explicit condition on the regularization
parameter $\alpha$ to enforce the solution of \eqref{eq:varframe} to be zero.
Furthermore, according to the following Lemma for singular vectors $\ul$
of one-homogeneous functionals there even have to exist parameters $\alpha$ such
that \eqref{eq:zerocond} is fulfilled for $f = K\ul$.

\begin{lem}\label{lem:meyercondeigfct}
Let $J$ be one-homogeneous. If $u \in \dom(J) \cap \dom(K)$ is
a function such that \eqref{eq:zerocond} does not hold for any $\alpha \in \rpos$ with data $f = Ku$, i.e.
\begin{align*}
\frac{1}{\alpha} K^{*}Ku \notin \partial J(0) \ ~\forall \alpha \in \rpos \, \text{,}
\end{align*}
then, $u$ is not a singular vector with singular value $\lambda \neq 0$.
\begin{proof}
We want to prove the statement by contradiction. We therefore assume that on the one hand, $u$ is a singular vector with singular value $\lambda$, i.e. $\lambda K^{*}Ku = p \in \partial J(u)$. Taking a duality product of this relation with $u$ yields the equality
\begin{align}
\lambda \| Ku \|^2_{\hmc} = J(u) \, \text{,}\label{eq:meylemproof1}
\end{align}
due to the one-homogeneity of $J$. Moreover, from the definition of the
subdifferential, the singular value property yields
\begin{align}
J(v) \geq J(u) + \lambda \dupr[\umc]{K^{*}Ku}{v - u} \ \, \forall v \in \dom(J)
\,
\text{.}\label{eq:meylemproof2}
\end{align}
On the other hand, we know due to $\left(K^{*}Ku\right)/\alpha \notin J(0)$ for all $\alpha \in \rpos$ that there has to exist a function $v \in \dom(J)$ with
\begin{align}
\dupr[\hmc]{Ku}{Kv} &> \alpha J(v) \, \text{.}\label{eq:meylemproof3}
\intertext{If we insert \eqref{eq:meylemproof2} into \eqref{eq:meylemproof3}, for the particular choice of $v$ we therefore obtain}
\dupr[\hmc]{Ku}{Kv} &> \alpha \left( J(u) + \lambda \dupr[\hmc]{Ku}{Kv} -
\lambda \| Ku \|^2_{\hmc} \right)\nonumber\\
\Leftrightarrow (1 - \lambda\alpha)\dupr[\hmc]{Ku}{Kv} &> \alpha \left( J(u) -
\lambda \| Ku \|^2_{\hmc} \right) \, \text{.}\label{eq:meylemproof4}
\intertext{Equation \eqref{eq:meylemproof4} is supposed to be true for every $\alpha \in \rpos$, especially for the particular choice $\alpha = 1/\lambda$. In this case, \eqref{eq:meylemproof4} reads as}
\lambda \| Ku \|^2_{\hmc} &> J(u) \, \text{,}\nonumber
\end{align}
for $\lambda > 0$, and therefore is a contradiction to \eqref{eq:meylemproof1}.
\end{proof}
\end{lem}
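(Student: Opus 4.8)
The plan is to argue by contraposition: I assume that $u$ \emph{is} a singular vector with singular value $\lambda \neq 0$, and then exhibit some $\alpha \in \rpos$ for which $\frac{1}{\alpha} K^{*}Ku \in \partial J(0)$, which directly contradicts the hypothesis of the lemma. The first thing I would record is that, taking a dual product of $\lambda K^{*}Ku = p \in \partial J(u)$ with $u$ and using one-homogeneity, $\lambda = J(u) \geq 0$; hence $\lambda \neq 0$ in fact forces $\lambda > 0$, so that $\alpha := 1/\lambda$ is an admissible positive parameter.

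The heart of the argument is the observation that for a one-homogeneous functional the subdifferential at any point is contained in the subdifferential at the origin. Indeed, by the characterization \eqref{eq:onehomosubdiff}, every $p \in \partial J(u)$ satisfies $\dupr[\umc]{p}{v} \leq J(v)$ for all $v \in \umc$, and since $J(0) = 0 = \dupr[\umc]{p}{0}$ the same characterization read at the origin yields $p \in \partial J(0)$. I would then simply apply this to the dual singular vector $p = \lambda K^{*}Ku \in \partial J(u)$ (which is nonzero because $\dupr[\umc]{p}{u} = \lambda \|Ku\|_{\hmc}^2 = \lambda \neq 0$, using the normalization in Definition \ref{def:singvec}) to obtain $\lambda K^{*}Ku \in \partial J(0)$, i.e. $\frac{1}{\alpha} K^{*}Ku \in \partial J(0)$ with $\alpha = 1/\lambda$ and data $f = Ku$ — the desired contradiction to \eqref{eq:zerocond} failing for every $\alpha \in \rpos$.

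The only subtle points are bookkeeping: one must note $J(0) = 0$ (immediate from one-homogeneity, or from lower semicontinuity) so that \eqref{eq:onehomosubdiff} legitimately describes $\partial J(0)$ and the inclusion $\partial J(u) \subseteq \partial J(0)$ is valid, and one must discard the degenerate case $Ku = 0$, which is ruled out by the normalization $\|Ku\|_{\hmc} = 1$. If the inclusion $\partial J(u) \subseteq \partial J(0)$ feels too quick to cite, the fallback is the more computational route: combine the subgradient inequality $J(v) \geq J(u) + \lambda \dupr[\umc]{K^{*}Ku}{v-u}$ with the failure of \eqref{eq:zerocond}, which for every $\alpha$ furnishes a test element $v$ with $\dupr[\hmc]{Ku}{Kv} > \alpha J(v)$; substituting the former into the latter and then specializing to $\alpha = 1/\lambda$ collapses everything to $\lambda \|Ku\|_{\hmc}^2 > J(u)$, contradicting the identity $\lambda \|Ku\|_{\hmc}^2 = J(u)$. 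I do not anticipate a genuine obstacle here; the statement is essentially the observation that Meyer's zero condition for the data $f = Ku$ is met at some parameter precisely when $u$ is a singular vector (or lies in $\ker(J)$).
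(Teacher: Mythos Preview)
Your proposal is correct, and your primary route is actually cleaner than the paper's. The paper takes exactly what you call the ``fallback'' computational route: it writes down the subgradient inequality $J(v) \geq J(u) + \lambda \langle K^{*}Ku, v-u\rangle$, combines it with the existence (for each $\alpha$) of a test element $v$ violating $\langle Ku, Kv\rangle \leq \alpha J(v)$, and then specializes to $\alpha = 1/\lambda$ to force $\lambda \|Ku\|_{\hmc}^2 > J(u)$, contradicting the identity $\lambda \|Ku\|_{\hmc}^2 = J(u)$. Your main argument short-circuits all of this by invoking the structural fact $\partial J(u) \subseteq \partial J(0)$ for one-homogeneous $J$ (immediate from \eqref{eq:onehomosubdiff}), so that the dual singular vector $p = \lambda K^{*}Ku$ lands in $\partial J(0)$ directly, furnishing the forbidden $\alpha = 1/\lambda$. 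This buys you a two-line proof and makes transparent \emph{why} the lemma holds, whereas the paper's manipulation, while perfectly valid, somewhat obscures that the content is just this inclusion of subdifferentials.
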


The equivalent reverse statement of Lemma \ref{lem:meyercondeigfct} is that for every data $f$ created by a singular vector $u_\lambda$, i.e. $f = Ku_\lambda$, there exists a parameter $\tilde{\alpha}$ such that \eqref{eq:zerocond} is valid for $\alpha \geq \tilde{\alpha}$.
Moreover, condition \eqref{eq:zerocond} guarantees that the data $f$ needs to satisfy certain properties in order to vanish for a large regularization parameter $\alpha$, e.g. $f$ does need to have zero mean for $K = I$ in the case of $\tv$-regularization.

\begin{rem}\normalfont
Note that, however, it is possible for a particular function $f$ that there does
not exist a parameter $\alpha$ such that \eqref{eq:zerocond} is fulfilled,
although $f = Ku_0$ is given in terms of a trivial ground state (and
therefore in terms of a singular vector) with singular value $\lambda = 0$. A
simple example would be that $f \not\equiv 0$ is a constant function in case
of $K = I$ and $J = \tv$.
\end{rem}


\subsection{Clean Data}\label{subsec:exrecofeigfct}
In case of clean data $f = \gamma K\ul$, $\gamma > 0$ and $\ul$ being a
non-trivial singular vector, we are interested in finding a solution of
\eqref{eq:varframe} that can be expressed in terms of this singular vector, i.e.
$\hat{u} = c \tilde{u}$ for a positive constant $c$. We want to call such a
function \textit{almost exact solution}. 

\noindent The following theorem gives us the conditions on $\alpha$ needed for recovering
a multiple of $\ul$.
\begin{thm}\label{thm:eigfctclean}
Let $J$ be one-homogeneous. Furthermore, let $\ul$ be
a singular vector with corresponding singular value $\lambda$. Then, if the data
$f$ is given as $f = \gamma K\ul$ for a positive constant $\gamma$, a 
solution of \eqref{eq:varframe} is $\hat{u} = c \ul$ for
\begin{align*}
	c = \gamma - \alpha \lambda \, \text{,}
\end{align*}
if $\gamma > \alpha \lambda$ is satisfied. \end{thm}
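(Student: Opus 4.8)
The plan is to verify that $\hat u = c\ul$ with $c = \gamma - \alpha\lambda$ satisfies the first-order optimality condition of the convex problem \eqref{eq:varframe}. Since the objective in \eqref{eq:varframe} is convex — a convex quadratic fidelity plus the convex functional $\alpha J$ — this condition is both necessary and sufficient for $\hat u$ to be a global minimizer, so once it is checked there is nothing left to do. I will not need the Rayleigh characterization or any compactness; this is a pure verification argument.

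First I would recall that $\hat u \in \dom(J)$ minimizes \eqref{eq:varframe} if and only if there exists $\hat p \in \partial J(\hat u)$ with
\begin{equation*}
K^{*}(K\hat u - f) + \alpha \hat p = 0 .
\end{equation*}
Then I would substitute the ansatz: with $f = \gamma K\ul$ and $\hat u = c\ul$ one has $K\hat u - f = (c-\gamma)K\ul = -\alpha\lambda K\ul$, hence
\begin{equation*}
K^{*}(K\hat u - f) = -\alpha\lambda K^{*}K\ul = -\alpha\, p_\lambda ,
\end{equation*}
where $p_\lambda = \lambda K^{*}K\ul \in \partial J(\ul)$ is the dual singular vector from Definition \ref{def:singvec}. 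So the optimality condition collapses to the single requirement that $p_\lambda \in \partial J(c\ul)$, with the natural candidate $\hat p = p_\lambda$.

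The remaining step is to exploit one-homogeneity of $J$ to show $\partial J(c\ul) = \partial J(\ul)$ for $c > 0$. Using the characterization \eqref{eq:onehomosubdiff}, an element $p$ lies in $\partial J(c\ul)$ iff $\langle p, c\ul\rangle_{\umc} = J(c\ul)$ and $\langle p, v\rangle_{\umc} \le J(v)$ for all $v \in \umc$; since $J(c\ul) = cJ(\ul)$ and $c>0$, the first identity is equivalent to $\langle p, \ul\rangle_{\umc} = J(\ul)$, while the inequality is unchanged, so the two subdifferentials coincide. This is precisely where the hypothesis $\gamma > \alpha\lambda$ enters, as it guarantees $c>0$ (and also $c\ul \in \dom(J)$, since $J(c\ul) = c\lambda < \infty$). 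Therefore $\hat p := p_\lambda \in \partial J(\ul) = \partial J(c\ul) = \partial J(\hat u)$, and $K^{*}(K\hat u - f) + \alpha\hat p = -\alpha p_\lambda + \alpha p_\lambda = 0$, so $\hat u = c\ul$ solves \eqref{eq:varframe}.

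I do not expect a genuine obstacle: the argument is a direct check of the convex optimality system. The only two points deserving care are the sufficiency of the first-order condition (which relies on convexity of the \emph{full} objective, not merely of $J$) and the scale-invariance $\partial J(c\ul) = \partial J(\ul)$ for positive $c$, which is the one place where one-homogeneity of $J$ and the sign condition $\gamma > \alpha\lambda$ are actually used.
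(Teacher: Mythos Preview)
Your argument is correct. The paper takes a slightly different packaging of the same idea: instead of checking the first-order optimality condition, it rewrites the objective (up to additive constants) as
\[
\frac{1}{2}\Vert Ku - cK\ul\Vert_{\hmc}^2 + \alpha D_J^{p_\lambda}(u,c\ul),
\]
where $D_J^{p_\lambda}$ is the Bregman distance with subgradient $p_\lambda=\lambda K^*K\ul$, and then observes that both terms are nonnegative and vanish at $u=c\ul$. The substance is identical---both proofs hinge on $p_\lambda\in\partial J(\ul)=\partial J(c\ul)$ for $c>0$, which is exactly the one-homogeneity step you isolate. Your route via the optimality system is the more standard and arguably more transparent verification; the paper's Bregman-distance reformulation is consistent with its broader methodology (the same trick is reused in Theorem~\ref{thm:meyercond} and in the noisy case) and makes minimality visible directly in the objective rather than through its derivative.
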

\begin{proof}
Again, we rewrite \eqref{eq:varframe} in terms of a Bregman distance. Inserting
$f = \gamma K\ul$ yields
\begin{align*}	
\hat{u} {} \in {} &\argmin_{u \in \dom(J)} \left\{ \ltwon[\hmc]{1}{Ku - \gamma
K\ul} + \alpha J(u) \right\}\\
{} \in {} &\argmin_{u \in \dom(J)} \left\{ \ltwon[\hmc]{1}{Ku - c K\ul} +
\alpha J(u) + \alpha J(c \ul) - \frac{\gamma - c}{\lambda}
\dupr[\umc]{\lambda K^{*}K\ul}{u} \right.\\
&\left. \hphantom{\argmin_{u \in \dom(J)} \{} + \frac{1}{2} \left(
\dupr[\hmc]{\gamma K\ul}{\gamma K\ul} + \dupr[\hmc]{c
K\ul}{c K\ul}  \right) - \alpha J(c \ul) \right\} \, \text{.}
\end{align*}
By ignoring the constant part, for $\gamma > \alpha \lambda$ and $c = \gamma - \alpha \lambda > 0$ we therefore obtain
\begin{align*}
\hat{u} {} = {} &\argmin_{u \in \dom(J)} \left\{ \ltwon[\hmc]{1}{Ku - c
K\ul} + \alpha D_J^q(u, c\ul) \right\} \, \text{,}\\
\intertext{with}
q {} = {} &\lambda K^{*}K\ul \in \partial J(\ul) \stackrel{\text{$J$ one-homogeneous}}{=} \partial J(c \ul) \, \text{.}
\end{align*}
An obvious minimizer is $\hat{u} = c \ul$.
\end{proof}

Note that the above result does not yield that the singular value is the unique minimizer, except for $K$ having trivial nullspace.
To see this, let us consider model \eqref{eq:varframe} with $J(u) = \| u \|_{\ell^1}$ and $K$ being
the matrix
\begin{align*}
K := \frac{1}{\sqrt{2}} \left( \begin{array}{cc} 1 & 1\\ 1 & 1
\end{array}\right) \, \text{.}
\end{align*}
It is obvious that $K$ is normalized with respect to the $\ell^2$-norm, but
neither is injective nor surjective. Due to Example \ref{exm:ell1gs} both $e_1$
and $e_2$ are singular vectors. However, both yield the same output $f =
(1/\sqrt{2}, 1/\sqrt{2})^T$ and therefore both $\hat{u} =
(1 - \alpha) e_1$ and $\hat{u} = (1 - \alpha) e_2$ satisfy Theorem
\ref{thm:eigfctclean}.

We also mention that the main line of Theorem \ref{thm:eigfctclean} also holds for $p$-homogeneous functionals, but with different constants $c$. In the following we turn our attention to noisy data, where the one-homogeneity is much more essential, e.g. exact reconstruction for a wide class of noise realizations cannot hold for quadratic regularizations.

\subsection{Noisy Data}\label{subsec:exrecofeigfctnoisy}

The multivaluedness of the subdifferential $\partial J$ allows to obtain almost
exact solutions even in the presence of noisy data, i.e. $f = \gamma K\ul + n$,
though the case of noisy data is slightly more complicated to prove. If the most
significant features of $\ul$ with respect to the regularization energy $J$ are
left unaffected by the noise, then the following theorem guarantees almost exact
recovery of the singular vector $\ul$.
\begin{thm}\label{thm:eigfctnoisy}
Let $J$ be one-homogeneous. Furthermore, let $\ul$ be
a singular vector with corresponding singular value $\lambda$. The data $f$ is
assumed to be corrupted by noise $n$, i.e. $f = \gamma K\ul + n$ for a positive constant $\gamma$, such that there exist positive constants $\mu$ and $\eta$ with
\begin{align}
\mu K^{*}K\ul + \eta K^{*}n \in \partial J(\ul) \, \text{.}\label{eq:noisyeigcond}
\end{align} 
Then, a solution of \eqref{eq:varframe} is given by $\hat{u} = c \ul$ for
$$
	c = \gamma - \alpha \lambda + \frac{\lambda - \mu}{\eta} \, \text{,} $$
if $\gamma$ satisfies the SNR-condition
\begin{equation}
	\gamma > \frac{\mu}{\eta} \, \text{,}		\label{eq:snrcond}
\end{equation}
and if $\alpha \in [1/\eta, \gamma/\lambda + 1/\eta [$ holds.
\begin{proof}
Similar to the proof of Theorem \ref{thm:eigfctclean} we rewrite \eqref{eq:varframe} to
\begin{align*}
\hat{u} &= \argmin_{u \in \dom(J)} \left\{ \ltwon[\hmc]{1}{Ku - \gamma K\ul - n} + \alpha J(u) \right\}\\
 &= \argmin_{u \in \dom(J)} \left\{ \ltwon[\hmc]{1}{Ku - c K\ul} + \alpha J(u) - \alpha \dupr[\umc]{\frac{\gamma - c}{\alpha}K^{*}K\ul + \frac{1}{\alpha} K^{*}n}{u} \right\} \\
&= \argmin_{u \in \dom(J)} \left\{ \ltwon[\hmc]{1}{Ku - c K\ul} + \alpha D_J^q(u, c\ul) \right\} \, \text{,}
\end{align*}
with obvious minimizer $\hat{u} = c \ul$, if we neglect the constant parts and if we can manage to choose $c$ such that
\begin{align*}
\frac{\gamma - c}{\alpha}K^{*}K\ul + \frac{1}{\alpha} K^{*}n \in \partial J(\ul) = \partial J(c \ul) \, \text{.}
\end{align*}
Note that since $\partial J(\ul)$ is a convex set not only $\lambda K^{*}K\ul$ and \eqref{eq:noisyeigcond} are elements of $\partial J(\ul)$, but also any convex combination, i.e.
\begin{align*}
\left(\left(1 - \beta\right)\lambda + \beta \mu \right)K^{*}K\ul + \beta \eta K^{*}n \in \partial J(\ul) \, \text{,}
\end{align*}
for each $\beta \in [0, 1]$.

Hence, we need to choose $c > 0$ and $\beta \in [0, 1]$ such that $1/\alpha = \beta \eta$ and $(\gamma - c)/\alpha = (1 - \beta)\lambda + \beta \mu$. Therefore, solutions for $\beta$ and $c$ are
\begin{align*}
\beta &= \frac{1}{\alpha \eta}
\intertext{and}
c &= \gamma - \alpha \lambda + \frac{\lambda - \mu}{\eta} \, \text{.}
\end{align*}
In order to satisfy $\beta \leq 1$ and $c > 0$, $\alpha$ has to be chosen such that $\alpha$ is bounded via
\begin{align*}
\frac{1}{\eta} \leq \alpha < \frac{\gamma}{\lambda} + \frac{1}{\eta} + \frac{\mu}{\lambda \eta} \, \text{.}
\end{align*}
This condition can only be satisfied, if $\gamma > \mu/\eta$ holds.
\end{proof}
\end{thm}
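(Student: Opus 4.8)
The plan is to follow the pattern of the proof of Theorem \ref{thm:eigfctclean}: rewrite the fidelity term by completing the square around $cK\ul$, absorb the remaining $u$-linear term into a Bregman distance anchored at $c\ul$, and thereby display the objective of \eqref{eq:varframe} — up to additive constants — as
$$
\ltwon[\hmc]{1}{Ku - cK\ul} + \alpha\, D_J^{q}(u, c\ul),
$$
a sum of two non-negative quantities that both vanish at $u = c\ul$, so that $\hat u = c\ul$ is a minimizer. The only non-trivial point is to choose $c$ so that the subgradient $q$ generated by this completion of squares actually lies in $\partial J(c\ul)$.

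First I would expand $\frac12\|Ku - \gamma K\ul - n\|_{\hmc}^2 - \frac12\|Ku - cK\ul\|_{\hmc}^2$ and collect the terms linear in $u$. Using $cK\ul - f = (c-\gamma)K\ul - n$ and moving $K$ to the other side of the inner product, this contributes exactly $-\dupr[\umc]{(\gamma-c)K^{*}K\ul + K^{*}n}{u}$, so that $\frac12\|Ku-f\|_{\hmc}^2 + \alpha J(u)$ equals $\ltwon[\hmc]{1}{Ku - cK\ul} + \alpha\big(J(u) - \dupr[\umc]{q}{u}\big)$ plus $u$-independent constants, with $q := \tfrac{\gamma-c}{\alpha}K^{*}K\ul + \tfrac1\alpha K^{*}n$. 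Invoking one-homogeneity of $J$ gives $\partial J(c\ul) = \partial J(\ul)$ for $c > 0$, and, via the characterization \eqref{eq:onehomosubdiff}, $\dupr[\umc]{q}{c\ul} = J(c\ul)$ as soon as $q \in \partial J(\ul)$; hence $J(u) - \dupr[\umc]{q}{u} = D_J^{q}(u, c\ul) \geq 0$, which is what we need.

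The core step is then showing $q \in \partial J(\ul)$. For this I would exploit convexity of the subdifferential together with two explicit members: the singular-vector relation supplies $\lambda K^{*}K\ul \in \partial J(\ul)$, and the hypothesis \eqref{eq:noisyeigcond} supplies $\mu K^{*}K\ul + \eta K^{*}n \in \partial J(\ul)$. Consequently every convex combination $\big((1-\beta)\lambda + \beta\mu\big)K^{*}K\ul + \beta\eta K^{*}n$, $\beta \in [0,1]$, belongs to $\partial J(\ul)$, and matching this expression with $q$ forces $\beta\eta = \tfrac1\alpha$ together with $(1-\beta)\lambda + \beta\mu = \tfrac{\gamma-c}{\alpha}$; solving yields $\beta = \tfrac1{\alpha\eta}$ and $c = \gamma - \alpha\lambda + \tfrac{\lambda-\mu}{\eta}$, precisely the claimed constant.

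Finally I would check feasibility of the parameters: the requirement $\beta \in [0,1]$ gives the lower bound $\alpha \geq \tfrac1\eta$, while $c > 0$ — needed both so that $\partial J(c\ul) = \partial J(\ul)$ and so that $\hat u$ is a positive multiple of $\ul$ — gives the stated upper bound, and the resulting interval for $\alpha$ is nonempty exactly when $\gamma > \mu/\eta$, i.e. under the SNR-condition \eqref{eq:snrcond}. I expect the main obstacle here to be conceptual rather than computational: one has to recognize that it is the multivaluedness of $\partial J$ — specifically that the noise-perturbed element $\mu K^{*}K\ul + \eta K^{*}n$ is still a subgradient at $\ul$ — which allows the noise term $K^{*}n$ to be absorbed into a Bregman distance centered at a positive multiple of $\ul$; the subsequent bookkeeping of the constraints on $\beta$, $c$, and $\alpha$ is routine, and it is at this stage that one sees one-homogeneity is indispensable, since for a $p$-homogeneous $J$ with $p \neq 1$ one would no longer have $\partial J(c\ul) = \partial J(\ul)$.
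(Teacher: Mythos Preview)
Your proposal is correct and follows essentially the same route as the paper: complete the square around $cK\ul$, recognize the residual linear term as $\alpha\langle q,u\rangle$ with $q=\tfrac{\gamma-c}{\alpha}K^{*}K\ul+\tfrac1\alpha K^{*}n$, and then realize $q$ as a convex combination of the two known subgradients $\lambda K^{*}K\ul$ and $\mu K^{*}K\ul+\eta K^{*}n$ to force $q\in\partial J(\ul)=\partial J(c\ul)$, which turns the objective into a fidelity plus a Bregman distance both vanishing at $c\ul$. Your coefficient matching and the resulting constraints on $\beta$, $c$, $\alpha$, and the SNR-condition are exactly those in the paper's argument.
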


At a first glance \eqref{eq:noisyeigcond} seems unreasonably restrictive, e.g. for smooth functionals $J$ it can only hold if the noise is generated by the singular vector itself. This however differs completely in the situation of singular regularization functionals with large subdifferentials. To see this, let us consider the setup of Example \ref{exm:ell1gs} with a ground state $e_i$, together with the reasonable assumptions that no pair of columns in $K$ is linearly independent. For 
$$ p = \lambda K^*K e_i \in \partial \Vert e_i \Vert_{\ell^1(\R^N)} $$
we have 
$$p_i = p \cdot e_i =\lambda e_i \cdot K^*K e_i = \lambda = 1$$
and hence for $j \neq i$ 
$$\vert p \cdot e_j \vert =  \vert e_j \cdot K^*K e_i \vert = \vert K_j \cdot K_i \vert < 1, $$
where we denote by $K_j$ the $j$-th column of $K$. Now let $n \in \R^M$ be the noise vector and $v=K^* n$. 
In order to satisfy  \eqref{eq:noisyeigcond} we need 
$$ 1= p \cdot e_i = \mu e_i \cdot K^*K e_i  + \eta e_i \cdot v = \mu + \eta v_i, $$
and 
$$ 1 \geq \vert p \cdot e_j \vert = \vert \mu K_j \cdot K_i + \eta v_j \vert. $$ 
The first condition is satisfied if $\mu = 1 - \eta v_i$. Since then
$$ \vert \mu K_j \cdot K_i + \eta v_j \vert =  \vert K_j \cdot K_i \vert + {\cal O}(\eta) 
$$  
for $\eta$ small and $ \vert K_j \cdot K_i \vert < 1$, we indeed conclude that there always exists $\eta$ such that \eqref{eq:noisyeigcond} is satisfied. Note that in this case $\lambda = 1$ and hence $c= \gamma - \alpha + v_i$.

\section{Bias of Variational Methods}

We have seen in the previous section that there remains a small bias in the exact reconstruction of singular vectors, which is incorporated in the fact that $c < \gamma$, with difference depending on the actual singular value. For $f = \gamma Ku_\lambda$, this difference yields a residual
$$ \Vert Ku - f \Vert_{\hmc} = \alpha \lambda \Vert Ku_\lambda \Vert_{\hmc} = \alpha \lambda, $$
i.e. a bias in the solution, which is minimal for the ground state $\lambda_0$. In this short section we will show that indeed this bias prevails for arbitrary data and the residual is bounded below by $\alpha \lambda_0$, which again confirms the extremal role of the ground state. 

\begin{thm}
Let $J$ be one-homogeneous, $f \in \hmc$ be arbitrary, $\alpha > 0$, and let 
$$u^\alpha \in \argmin_{u \in \umc} \left( \frac{1}2 \Vert Ku - f \Vert_{\hmc}^2 + \alpha J(u) \right). $$ 
Then 
\begin{equation}
	\Vert Ku^\alpha \Vert_{\hmc} \leq \max\{\Vert f \Vert_{\hmc} - \alpha \lambda_0, 0\},
\end{equation}
where $\lambda_0$ is the smallest singular value. As a direct consequence, if $\Vert f \Vert_{\hmc} \geq \alpha \lambda_0$, then
\begin{equation}
	\Vert Ku^\alpha -f \Vert_{\hmc} \geq \alpha \lambda_0, \label{residualbias}
\end{equation}
which is sharp if $u^\alpha$ is a multiple of the ground state $u_0$. 
\end{thm}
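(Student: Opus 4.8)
The plan is to exploit the first-order optimality condition of the variational problem and test it against $u^\alpha$ itself, exactly as in the proof of Theorem~\ref{thm:errest}. Since $u \mapsto \frac12\Vert Ku - f\Vert_{\hmc}^2$ is differentiable, the minimizer satisfies
\begin{equation*}
  K^*(Ku^\alpha - f) + \alpha p^\alpha = 0, \qquad p^\alpha \in \partial J(u^\alpha),
\end{equation*}
so that $\alpha p^\alpha = K^*(f - Ku^\alpha)$. Pairing with $u^\alpha$ and using that $J$ is one-homogeneous, hence $\langle p^\alpha, u^\alpha\rangle_{\umc} = J(u^\alpha)$ by \eqref{eq:onehomosubdiff}, I would derive the scalar identity
\begin{equation*}
  \langle f, Ku^\alpha \rangle_{\hmc} = \Vert Ku^\alpha \Vert_{\hmc}^2 + \alpha J(u^\alpha).
\end{equation*}

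From here the bound is essentially forced. If $Ku^\alpha = 0$ the claimed inequality is trivial, so assume $Ku^\alpha \neq 0$. Bounding the left-hand side by Cauchy--Schwarz, $\langle f, Ku^\alpha\rangle_{\hmc} \le \Vert f\Vert_{\hmc}\,\Vert Ku^\alpha\Vert_{\hmc}$, and invoking the Rayleigh-type inequality $J(u^\alpha) \ge \lambda_0 \Vert Ku^\alpha\Vert_{\hmc}$ — which is precisely the defining property of the ground state (Definition~\ref{defi:gs}) and is exactly the estimate already used in the proof that $u_0$ is a singular vector — one gets
\begin{equation*}
  \Vert f\Vert_{\hmc}\,\Vert Ku^\alpha\Vert_{\hmc} \ge \Vert Ku^\alpha\Vert_{\hmc}^2 + \alpha\lambda_0 \Vert Ku^\alpha\Vert_{\hmc}.
\end{equation*}
Dividing by $\Vert Ku^\alpha\Vert_{\hmc} > 0$ gives $\Vert Ku^\alpha\Vert_{\hmc} \le \Vert f\Vert_{\hmc} - \alpha\lambda_0$, which together with the trivial bound $\Vert Ku^\alpha\Vert_{\hmc}\ge 0$ yields the stated $\max\{\Vert f\Vert_{\hmc} - \alpha\lambda_0, 0\}$. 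For the consequence \eqref{residualbias}, assume $\Vert f\Vert_{\hmc}\ge \alpha\lambda_0$; then $\Vert Ku^\alpha\Vert_{\hmc} \le \Vert f\Vert_{\hmc} - \alpha\lambda_0 \le \Vert f\Vert_{\hmc}$ and the reverse triangle inequality gives $\Vert Ku^\alpha - f\Vert_{\hmc} \ge \Vert f\Vert_{\hmc} - \Vert Ku^\alpha\Vert_{\hmc} \ge \alpha\lambda_0$. Sharpness follows by choosing $f = \gamma Ku_0$ with $\gamma > \alpha\lambda_0$: by Theorem~\ref{thm:eigfctclean} the solution is $u^\alpha = (\gamma - \alpha\lambda_0)u_0$, whence $\Vert Ku^\alpha - f\Vert_{\hmc} = \alpha\lambda_0\,\Vert Ku_0\Vert_{\hmc} = \alpha\lambda_0$.

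The step I expect to be the main obstacle is precisely the inequality $J(u^\alpha)\ge \lambda_0\Vert Ku^\alpha\Vert_{\hmc}$. The ground state is defined by minimizing $J$ over $\ker(J)^{\bot}$, so this Rayleigh bound is immediate only insofar as the relevant component of $u^\alpha$ lies in $\ker(J)^{\bot}$; when $J$ has a non-trivial kernel that is not annihilated by $K$, one has to argue that the optimality condition $K^*(f-Ku^\alpha)=\alpha p^\alpha\in\alpha\partial J(u^\alpha)$ controls the behaviour of $u^\alpha$ along $\ker(J)$ (note $\langle p^\alpha, v\rangle_{\umc}=0$ for $v\in\ker(J)$ by one-homogeneity), or to read the statement as referring to the kernel-free situation exemplified by the $\tv_*$ construction. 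Everything else is routine bookkeeping with Cauchy--Schwarz, one-homogeneity of $J$, and the reverse triangle inequality; the only place where the structural hypotheses (one-homogeneity and the ground-state normalization $\Vert Ku_0\Vert_{\hmc}=1$) are genuinely needed is in obtaining the clean constant $\alpha\lambda_0$ rather than a weaker multiple of it.
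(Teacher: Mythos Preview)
Your argument is essentially identical to the paper's: it too tests the optimality condition against $u^\alpha$ (the paper normalizes first, setting $v=u^\alpha/\Vert Ku^\alpha\Vert_{\hmc}$, and then pairs, but this is purely cosmetic), uses $\langle p^\alpha,v\rangle_{\umc}=J(v)$ from one-homogeneity, invokes $J(v)\geq\lambda_0$ from the ground-state definition, and finishes with Cauchy--Schwarz. The kernel issue you flag is real, but the paper glosses over it in exactly the same way (asserting ``by the definition of the ground state we conclude $J(v)\geq\lambda_0$'' without restricting to $\ker(J)^\bot$), so your proposal matches the paper's proof both in strategy and in level of rigor.
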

\begin{proof}
If $Ku^\alpha = 0$, then the estimate is obviously satisfied. Thus, we restrict our attention to the case 
$Ku^\alpha \neq 0$ and define $v:=\frac{u^\alpha}{\Vert Ku^\alpha\Vert}$.
From the dual product of the optimality condition with $v$ we see that for a subgradient 
$p^\alpha \in \partial J(u^\alpha)=\partial J(v)$ that 
$$ \langle K^* K u^\alpha , v \rangle_{\umc} + \alpha \langle p^\alpha, v \rangle_{\umc} = \langle K^* f, v \rangle_{\umc} .$$ 
Due to the one-homogeneity of $J$ we conclude $\langle p^\alpha, v \rangle_{\umc} = J(v)$ and hence, 
$$ \Vert Ku^\alpha \Vert_{\hmc} + \alpha J(v) = \langle f, Kv \rangle_{\hmc}. $$
By the definition of the ground state we conclude $J(v) \geq \lambda_0$ and by further estimating the right-hand side via the Cauchy-Schwartz inequality $\langle f, Ku_\alpha\rangle_{\hmc} \leq \|f\|_{\hmc} \|Ku_\alpha\|_{\hmc}$ we have
$$  \Vert Ku^\alpha \Vert_{\hmc} + \alpha \lambda_0 \leq \Vert f \Vert_{\hmc}, $$
which implies the assertion.
\end{proof}

The bias in the residual can to some extent also be translated to the regularization functional, as the following result shows:

\begin{thm}
Let $J$ be one-homogeneous, $f = K \tilde u$ for $\tilde u \in \umc$ with $J(\tilde u) < \infty$, such that 
$\Vert f \Vert_{\hmc} \geq \alpha \lambda_0$. Moreover let $\alpha > 0$ and  
$$u^\alpha \in \argmin_{u \in \umc} \left( \frac{1}2 \Vert Ku - f \Vert^2_{\hmc} + \alpha J(u) \right). $$ 
Then 
\begin{equation}
	J(u^\alpha) \leq J(\tilde u) - \frac{\alpha}2 \lambda_0^2 \, \text{,}
\end{equation}
with $\lambda_0$ denoting the smallest singular value.
\end{thm}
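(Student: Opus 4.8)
The plan is to combine the minimality of $u^\alpha$ with the residual lower bound \eqref{residualbias} from the preceding theorem. First I would test the optimality of $u^\alpha$ against the competitor $\tilde u$. Since $f = K\tilde u$, the fidelity term $\frac12\Vert K\tilde u - f\Vert_{\hmc}^2$ vanishes, so that
\begin{equation*}
	\frac12 \Vert Ku^\alpha - f \Vert_{\hmc}^2 + \alpha J(u^\alpha) \leq \frac12 \Vert K\tilde u - f \Vert_{\hmc}^2 + \alpha J(\tilde u) = \alpha J(\tilde u).
\end{equation*}
Rearranging gives $\alpha J(u^\alpha) \leq \alpha J(\tilde u) - \frac12 \Vert Ku^\alpha - f\Vert_{\hmc}^2$.

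Next I would insert the residual bias. The hypothesis $\Vert f\Vert_{\hmc} \geq \alpha\lambda_0$ is precisely the one under which the previous theorem yields $\Vert Ku^\alpha - f\Vert_{\hmc} \geq \alpha\lambda_0$, and hence $\Vert Ku^\alpha - f\Vert_{\hmc}^2 \geq \alpha^2\lambda_0^2$. Substituting this into the inequality above and dividing by $\alpha > 0$ gives
\begin{equation*}
	J(u^\alpha) \leq J(\tilde u) - \frac{\alpha}{2}\lambda_0^2,
\end{equation*}
which is the assertion.

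I expect essentially no genuine obstacle here; the only point requiring care is that every hypothesis is used exactly where it is needed: $J(\tilde u) < \infty$ makes the comparison inequality informative, $\alpha > 0$ licenses the final division, and $\Vert f\Vert_{\hmc}\geq\alpha\lambda_0$ is what activates the residual lower bound. If one preferred not to invoke the earlier residual theorem as a black box, the alternative would be to repeat its argument inline — take the dual product of the optimality condition $K^*(Ku^\alpha - f) + \alpha p^\alpha = 0$ with $v = u^\alpha/\Vert Ku^\alpha\Vert_{\hmc}$ (assuming $Ku^\alpha\neq 0$, the case $Ku^\alpha=0$ being trivial), use one-homogeneity to replace $\langle p^\alpha, v\rangle_{\umc}$ by $J(v)\geq\lambda_0$, and apply Cauchy–Schwarz to bound the right-hand side — but directly reusing the stated theorem is the cleanest route.
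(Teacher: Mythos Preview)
Your proof is correct and is essentially identical to the paper's own argument: compare $u^\alpha$ against $\tilde u$ in the variational functional (using $f=K\tilde u$), insert the residual lower bound \eqref{residualbias} from the preceding theorem, and divide by $\alpha$.
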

\begin{proof}
By the definition of $u^\alpha$ as a minimizer and \eqref{residualbias} we conclude
$$ \frac{1}2 \alpha^2 \lambda_0^2 + \alpha J(u^\alpha) \leq \frac{1}2 \Vert Ku^\alpha -f \Vert^2_{\hmc} 
+ \alpha J(u^\alpha) \leq \alpha J(\tilde u), $$
which yields the assertion after dividing by $\alpha$.
\end{proof}

\section{Unbiased Recovery and Inverse Scale Space}

In Section \ref{sec:exactrec} we have seen that in standard variational methods of the form \eqref{eq:varframe} one can recover singular vector (almost) exactly with a loss of contrast. In this chapter we want to extend this topic to
the question of exact recovery without a loss of contrast, in the absence and
presence of noise. For this sake we are going to investigate the concept of the
inverse scale space flows, which have displayed superior properties to solutions of \eqref{eq:varframe} in several numerical tests (cf. \cite{gilboa,groetsch}).

\noindent Inverse scale space methods can be derived asymptotically from the Bregman iteration 
\begin{equation}
	u^{k+1} \in \argmin_{u \in \dom(J)} \left\{ \frac{1}{2} \left\| Ku - f
\right\|^2_{\hmc} + \alpha ( J(u) - \langle p^k , u \rangle_{\umc})  \right\} \, \text{,}
\end{equation}
for which the subgradient $p^k \in \partial J(u^k)$ satisfies $p^0 \equiv 0$ and 
\begin{equation}
	p^k = p^{k-1} + \frac{1}\alpha K^* (f - Ku^k) \, \text{.}
\end{equation}
In the limit $\alpha \rightarrow \infty$ one can interpret $\Delta t = \frac{1}\alpha$ as a time step tending to zero. Thus, we obtain the inverse scale space flow
\begin{equation}
	\partial_t p(t)= K^* (f - Ku(t)), \qquad p(t) \in \partial J(u(t)). \label{eq:invscale}
\end{equation}
We refer to \cite{gilboa,benning,BFOS07,burgermoeller,aiss,burgresm,moellerdiss} for detailed discussions of the inverse scale space method and its analysis.  

We want to mention that analogous results on exact respectively unbiased reconstruction can be obtained for the Bregman iteration, clearly with some dependence on the value of $\alpha$, further details can be found in \cite{benning}.


\subsection{Clean Data}
Similar to Section \ref{subsec:exrecofeigfct}, we are going to consider data $f = \gamma K\ul$, with $\ul$ being a singular vector. For this
setup we are able to derive the following result:
\begin{thm}\label{thm:invscalspeigclean}
Let $J$ be one-homogeneous and let $\ul$ be
a singular vector with corresponding singular value $\lambda$. Then, if the data
$f$ are given by $f = \gamma K\ul$ for a positive constant $\gamma$, a
solution of the inverse scale space flow \eqref{eq:invscale} is given by
\begin{equation}
	u(t) = \left\{ \begin{array}{ll} 0 & \text{if } t < t_* \\ \gamma \ul & \text{if } t \geq t_* \end{array} \right.
\end{equation}
for $t_{*} = \lambda/\gamma$.
\begin{proof}
First of all we see with Theorem \ref{thm:meyercond} that for $t < t_*$ we have
$$ p(t) = t K^*f = t \gamma K^*K \ul = t \frac{\gamma}\lambda p_\lambda \in \partial J(0). $$
Since $\partial_t p = K^* f$ and $p(0)=0$,  $u(t) = 0$ is a solution of \eqref{eq:invscale}. 

For time $t\geq t_*$ a continuous extension of $p$ is given by the constant $p(t)=p(t_*)$ and $u(t)=u(t_*)$.
Due to the one-homogeneity, with $t_{*} = \lambda/\gamma$ we obtain 
$$p(t_{*}) \in \partial J(\ul) = \partial J(\gamma \ul), \qquad K^*(f-Ku(t_*)) = 0 \, \text{.}$$ 
Thus, $\partial_t p = 0$ yields indeed a solution of \eqref{eq:invscale} for $t \geq t_*$.
\end{proof}
\end{thm}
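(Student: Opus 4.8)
The plan is to produce an explicit candidate pair $(u(t),p(t))$ and verify by hand that it satisfies the flow \eqref{eq:invscale} together with the initial condition $p(0)=0$ inherited from the Bregman iteration. Guided by the structure of the Bregman iteration, I would take
\begin{equation*}
p(t) = \begin{cases} t\gamma K^*K\ul & \text{if } t \le t_* \\ p_\lambda & \text{if } t \ge t_* \end{cases}, \qquad u(t) = \begin{cases} 0 & \text{if } t < t_* \\ \gamma\ul & \text{if } t \ge t_* \end{cases},
\end{equation*}
where $p_\lambda = \lambda K^*K\ul \in \partial J(\ul)$ is the dual singular vector. Since $t\gamma K^*K\ul = \tfrac{t\gamma}{\lambda}p_\lambda$, the value $t=t_*$ is exactly the instant at which this linearly growing path reaches $p_\lambda$; this forces $t_*=\lambda/\gamma$ and makes $p$ continuous (indeed Lipschitz) across $t_*$, which is the only regularity of $p$ we need, the primal variable $u$ being allowed to jump.

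First I would check the interval $[0,t_*)$. There $Ku(t)=0$, so the right-hand side of \eqref{eq:invscale} is $K^*(f-Ku(t)) = K^*f = \gamma K^*K\ul = \partial_t p(t)$, and $p(0)=0$. It then remains to verify $p(t)\in\partial J(0)$. Using the one-homogeneous characterization \eqref{eq:onehomosubdiff} one has $\partial J(\ul)\subseteq\partial J(0)$, hence $p_\lambda\in\partial J(0)$; and since $p(t)=c\,p_\lambda$ with $c=\tfrac{t\gamma}{\lambda}\in[0,1)$ for $t<t_*$, the inequality $\langle p(t),v\rangle\le J(v)$ follows from $\langle p_\lambda,v\rangle\le J(v)$ by distinguishing the cases $\langle p_\lambda,v\rangle\ge 0$ (use $c\le 1$) and $\langle p_\lambda,v\rangle<0$ (use $c\ge 0$ and $J\ge 0$). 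Thus $p(t)\in\partial J(0)=\partial J(u(t))$ on $[0,t_*)$.

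Next I would treat $[t_*,\infty)$. At $t=t_*$ we have $p(t_*)=p_\lambda\in\partial J(\ul)=\partial J(\gamma\ul)$ by one-homogeneity of the subdifferential, so $p(t_*)\in\partial J(u(t_*))$. For $t\ge t_*$, $u(t)=\gamma\ul$ gives $Ku(t)=\gamma K\ul=f$, hence $K^*(f-Ku(t))=0=\partial_t p(t)$, while $p(t)\equiv p_\lambda\in\partial J(\gamma\ul)$. Concatenating the two intervals, $(u,p)$ solves \eqref{eq:invscale} with $p(0)=0$, which is the assertion.

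The only step requiring real care — rather than a genuine obstacle — is the inclusion $p(t)\in\partial J(0)$ on $[0,t_*)$: it rests on the fact that the subdifferential of a one-homogeneous functional is stable under contraction towards the origin, which is precisely where one-homogeneity (rather than mere $p$-homogeneity) enters. Alternatively one may simply invoke Theorem \ref{thm:meyercond}, reading $\tfrac1\alpha K^*f\in\partial J(0)$ with $\alpha=1/t$ as the condition under which $u\equiv 0$ is stationary. A second, minor point worth a sentence is the identification $t_*=\lambda/\gamma$, which is forced by requiring the path $t\gamma K^*K\ul$ to match the boundary subgradient $p_\lambda=\lambda K^*K\ul$ at the switching time.
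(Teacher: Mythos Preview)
Your proof is correct and follows essentially the same approach as the paper: construct the explicit candidate pair $(u,p)$ and verify the flow on each subinterval. The only cosmetic difference is that for the inclusion $p(t)\in\partial J(0)$ on $[0,t_*)$ you give a self-contained contraction argument using \eqref{eq:onehomosubdiff}, whereas the paper simply invokes Theorem~\ref{thm:meyercond} (together with the surrounding remarks) for the same fact; you even mention this alternative yourself.
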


\subsection{Noisy Data}
As in Section \ref{sec:exactrec} the case of noisy data is a bit more complicated. In order to recover a singular vector exactly despite the contamination of the data $f$ with noise, we basically need the signal ratio $\mu$ as introduced in Theorem \ref{thm:eigfctnoisy} to equal the singular value $\lambda$. We also mention that the stopping time is the regularization parameter in inverse scale space methods, thus we can only expect the exact reconstruction to happen in a time interval $(t_*,t_{**})$.  More precisely we obtain the following result:
\begin{thm}\label{thm:invscalspeignoisy}
Let $J$ be one-homogeneous and let $\ul$ be a
singular vector with corresponding singular value $\lambda$. The data $f$ is assumed to be corrupted by noise $n$, i.e. $f = \gamma K\ul + n$ for a positive constant $\gamma$, such that there exist positive constants $\mu$ and $\eta$ that satisfy \eqref{eq:noisyeigcond} and \eqref{eq:snrcond}. Then, a solution of the inverse scale space flow \eqref{eq:invscale} is given by
\begin{equation}
	u(t) = \left\{ \begin{array}{ll} 0 & \text{if } t < t_* \\ c \ul & \text{if } t_* \leq t < t_{**}  \end{array} \right. 
\end{equation}
for 
\begin{align}
c = \gamma + \frac{\lambda - \mu}{\eta} \, \text{.}\label{eq:isscondnoisy}
\end{align}
and 
$$t_{*} = \frac{\lambda \eta}{\lambda + \gamma \eta - \mu} < t_{**} = \eta.  $$
\end{thm}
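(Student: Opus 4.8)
The strategy is to verify directly that the proposed $u(t)$ together with an explicitly constructed $p(t)$ solves the inverse scale space flow \eqref{eq:invscale}. The argument splits into the two time regimes, exactly as in the proof of Theorem \ref{thm:invscalspeigclean}, but with the subgradient constructed from the convex combination that appeared in the proof of Theorem \ref{thm:eigfctnoisy}. First I would treat $t < t_*$: since $u(t)\equiv 0$ there we need $p(t) = t K^* f = t(\gamma K^*K\ul + K^* n)$ and must check $p(t) \in \partial J(0)$. By the one-homogeneity characterization \eqref{eq:onehomosubdiff}, $\partial J(0)$ is convex, and we know both $\lambda K^*K\ul = p_\lambda \in \partial J(\ul) \subseteq$ (boundary of) the relevant convex set and $\mu K^*K\ul + \eta K^* n \in \partial J(\ul)$ by \eqref{eq:noisyeigcond}; the point $t(\gamma K^*K\ul + K^*n)$ is a nonnegative scalar multiple of a convex combination of $p_\lambda$ and \eqref{eq:noisyeigcond} as long as $t$ is small enough, and the threshold where it first touches $\partial J(\ul)$ is precisely $t_* = \frac{\lambda\eta}{\lambda + \gamma\eta - \mu}$. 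This is the step I expect to require the most care: one must write $t(\gamma K^*K\ul + K^*n) = s\bigl((1-\beta)\lambda + \beta\mu\bigr)K^*K\ul + s\beta\eta K^*n$ and match coefficients, showing the scaling factor $s$ and weight $\beta\in[0,1]$ exist exactly when $t\le t_*$, using the SNR condition \eqref{eq:snrcond} to guarantee $\beta\le 1$ and positivity of the denominator $\lambda + \gamma\eta - \mu$.

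Next I would handle $t \in [t_*, t_{**})$: here $u(t) = c\ul$ with $c = \gamma + \frac{\lambda-\mu}{\eta}$. The residual is $f - Ku(t) = (\gamma - c)K\ul + n = -\frac{\lambda-\mu}{\eta}K\ul + n$, so $K^*(f - Ku(t)) = -\frac{\lambda-\mu}{\eta}K^*K\ul + K^* n$, which is a fixed nonzero vector. Thus $p$ must move linearly, $p(t) = p(t_*) + (t - t_*)K^*(f - Ku(t))$, and I need $p(t) \in \partial J(c\ul) = \partial J(\ul)$ for all such $t$. Since $\partial J(\ul)$ is convex and contains $\lambda K^*K\ul$, I would show $p(t_*)$ equals the convex-combination subgradient $\bigl((1-\beta_*)\lambda + \beta_*\mu\bigr)K^*K\ul + \beta_*\eta K^*n$ from \eqref{eq:noisyeigcond} for the appropriate $\beta_* = \frac{1}{t_*\eta}$ forced by continuity at $t_*$, and that adding further positive multiples of $-\frac{\lambda-\mu}{\eta}K^*K\ul + K^*n$ keeps us inside $\partial J(\ul)$ until the coefficient of the noise term $K^*n$ reaches $\eta$ (the endpoint of the admissible convex-combination family), which happens exactly at $t = t_{**} = \eta$. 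Checking $t_* < t_{**}$ reduces to the inequality $\frac{\lambda\eta}{\lambda+\gamma\eta-\mu} < \eta$, i.e. $\lambda < \lambda + \gamma\eta - \mu$, i.e. $\gamma\eta > \mu$, which is exactly \eqref{eq:snrcond}.

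Finally I would note that at $t = t_*$ both expressions for $p$ agree by construction (continuity of $p$, which is the regularity the inverse scale space flow demands), and that $\partial_t p(t)$ equals $K^*(f - Ku(t))$ in each open interval by the linearity just described, so \eqref{eq:invscale} holds in the required sense. The one-homogeneity is used twice in an essential way: to identify $\partial J(c\ul) = \partial J(\ul)$ for the positive scalar $c$, and to know that $\partial J(0)$ and $\partial J(\ul)$ are convex sets to which the convex-combination argument applies. The main obstacle, as flagged above, is the bookkeeping in the first regime — verifying that the straight-line path $t\mapsto t K^* f$ stays in $\partial J(0)$ up to precisely $t_*$ and exits through $\partial J(\ul)$ — but this is essentially a repackaging of the coefficient-matching already carried out in Theorem \ref{thm:eigfctnoisy}, with $\alpha$ there playing the role of $1/(\beta\eta)$ here.
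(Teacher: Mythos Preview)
Your plan is essentially the same as the paper's proof: verify the flow equation directly by constructing $p(t)$ explicitly on each interval and checking the subgradient inclusion via the convex-combination family $(1-\beta)\lambda K^*K\ul + \beta(\mu K^*K\ul + \eta K^*n)$ from Theorem~\ref{thm:eigfctnoisy}. The paper is terser in the first regime (it simply appeals to the argument of Theorem~\ref{thm:invscalspeigclean}), while you spell out the scaling-factor idea, which is fine and in fact clarifies why the threshold is exactly $t_*$: one finds $\beta = \lambda/(\lambda+\gamma\eta-\mu)$ independent of $t$ and scaling factor $s = t/t_*$.

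One slip to correct: the value $\beta_* = \tfrac{1}{t_*\eta}$ you wrote is the Theorem~\ref{thm:eigfctnoisy} formula $\beta = \tfrac{1}{\alpha\eta}$ with $\alpha$ replaced by $t_*$, but in the inverse scale space setting time plays the role of $1/\alpha$, not $\alpha$. Matching the $K^*n$ coefficient in $p(t_*) = t_*\gamma K^*K\ul + t_* K^*n$ with $\beta\eta K^*n$ gives $\beta_* = t_*/\eta = \lambda/(\lambda+\gamma\eta-\mu)$, which is the value the paper obtains. Likewise, in the second regime the running $\beta(t) = t/\eta$ (not $1/(t\eta)$), and $\beta(t)\le 1$ is what forces $t < t_{**}=\eta$. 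With this correction your coefficient-matching goes through and the rest of the argument is sound.
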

\begin{proof}
With a similar argumentation as in the proof of Theorem \ref{thm:invscalspeigclean} we obtain $u(t) = 0$ for $t < t_*$ and
$$ p(t_*) = t_* \gamma K^*K\ul + t_* K^* n $$
as the corresponding subgradient to the first non-zero $u$ for a critical time $t_*$. Analogous to the proof of Theorem \ref{thm:eigfctnoisy} we can treat the relation above as a convex combination of $\lambda K^{*}K$ and \eqref{eq:noisyeigcond} for any $\beta \in [0, 1]$, and determine
$ \beta = \frac{\lambda}{\lambda + \gamma \eta - \mu}$ and subsequently $t_*$ as above.
Moreover, we see that  $u(t_{*}) = c \ul$ is a feasible solution with subgradient $p(t_*)$, which we extrapolate as constant for further times up to some time $t_{**}.$ Then,
from $p(t_{*}) = t_{*} K^{*}f$ and $\partial_t p(t) = K^{*}(f - cK\ul)$ we conclude
\begin{align*}
p(t) &= t K^{*}f - (t - t_{*}) c K^{*}K\ul\\
&= t K^{*}(\gamma K\ul + n) - (t - t_{*}) c K^{*}K\ul\\
&= t K^{*}n + (\gamma t - c (t - t_{*})) K^{*}K\ul \, \text{.}
\end{align*}
To obtain $p(t) \in \partial J(u(t_*))$ we again compare convex combinations of \eqref{eq:noisyeigcond} and $\lambda K^*K \ul$ with parameter $\beta \in [0,1]$. We need to choose $\beta=\frac{t}\eta$, which is only possible for $t < t_{**} = \eta$. Further we obtain that
$\mu = \gamma t - c (t - t_{*})$ needs to hold. This identity can be verified with the above formulas for $t_{*}$ and $c$. 
\end{proof}

%

\section{Further Examples}\label{sec:exm}


In the following we shall discuss several further examples to illustrate the use of nonlinear singular values:

\subsection{Hilbert Space Norms}

The obvious first starting point is to consider regularizations with Hilbert space norms, i.e. 
\begin{equation}
	J(u)=\norm{u}_{\umc} \, \text{.}
\end{equation}
Note that we focus on the one-homogeneous case here, i.e. we do not use the squared Hilbert space norm as in the standard formulation of Tikhonov regularization. However, it is easy to check that there is a one-to-one relation between the regularization parameters of the problems with squared and non-squared norms, such that they are equivalent. It is straight-forward to see that the singular values are determined from
\begin{equation}
	\lambda K^* K u = \frac{u}{\norm{u}_{\umc}} \, \text{,} 
\end{equation}
and with our normalization we see $\lambda = \norm{u}_{\umc}$.
Thus if $u_n$ is a singular vector in the classical definition
\begin{equation}
	K^* K u_n = \sigma_n^2 u_n, 
\end{equation}
it is also a singular vector in the new definition (at least after appropriate normalization). The linear singular values $\sigma_n$ are related to the novel values $\lambda_n$ via
\begin{equation}
	\sigma_n^2 = \frac{1}{\lambda_n \norm{u_n}_{\umc}} = \frac{1}{\lambda_n^2},  
\end{equation}
which is consistent with our original definition as singular values being related to the regularization functional rather than to $K$. 

The fact that singular values yield exact solutions of the variational problem is not new and is directly inferred as a special case of the standard theory (cf. \cite{EHN96}). However, it is surprising how the behavior of the inverse scale space method changes when rescaling from the squared norm to the one-homogeneous case. In the case of $J(u) = \frac{1}{2}\|u\|^2_{\umc}$ the inverse scale space method is equivalent to Showalter's method (cf. \cite{showalter})
$$ \partial_t u = K^*(f-Ku), $$
and it is well-known that singular vectors follow an exponential dynamic, i.e. if $f=K\ul$ for a singular vector $\ul$, then 
$$u(t) = (1-e^{-t/\lambda}) \ul$$
in case of $K = I$. The behavior changes completely in the one-homogeneous case as we can conclude from the results in the previous section, since the solution remains zero in finite time and then jumps exactly to $u(t) = \ul$ at the critical time. 

\subsection{Total Variation}

We have already used the ROF model for denoising at several instances as a simple illustrative example, in particular in spatial dimension one. 
In this section we want to extend the considerations of the one dimensional ROF model to data that is corrupted by noise. Moreover, we want to highlight the connection between singular vectors and characteristic functions of so-called calibrable sets as introduced in \cite{alter1, alter2}, with respect to the isotropic total variation functional in higher spatial dimension. In \cite{caselles} the theory of calibrable sets has also been extended to more general (and in particular anisotropic) regularization functionals, which we disregard here for the sake of simplicity. Instead, we introduce an analytical solution of the anisotropic total variation regularization in terms of the singular vector definition , similar to the previous examples in spatial dimension one.

\subsubsection*{Unbiased Recovery in Practice}
In this section we briefly want to illustrate that in practice the violation of
the assumptions of the Theorems \ref{thm:invscalspeigclean} and
\ref{thm:invscalspeignoisy} can indeed yield undesired artifacts in the
reconstruction. We therefore want to investigate the piece-wise constant function $u_4:[0, 1] \rightarrow \{-1, 1\}$
\begin{align}
u_4(x) := \begin{cases} 1 & x \in \left[\frac{1}{4}, \frac{3}{4}\right]\\ -1 &
\text{else} \end{cases} \,
\text{.}\label{eq:cylinder}
\end{align}
It is straightforward to see that $u_4$ is a singular vector of $\tv$ with
singular value $\lambda = 4$, and with the corresponding dual singular vector $p_4$ satisfying the relation $p_4 = q_4^\prime$ (in terms of a weak derivative) for $q_4$ defined as
\begin{align}
q_4(x) := 4 \begin{cases} -x & x \in \left[0, \frac{1}{4}\right[ \\ x -
\frac{1}{2} & x \in \left[\frac{1}{4}, \frac{3}{4}\right]\\ 1 - x & x \in
\left]\frac{3}{4}, 1\right] \end{cases} \,
\text{.}\label{eq:dualcylinder}
\end{align}
Both functions are visualized in Figure \ref{fig:unbrecpractice1}.
\begin{figure}[!ht]
\begin{center}
\includegraphics[scale=0.4]{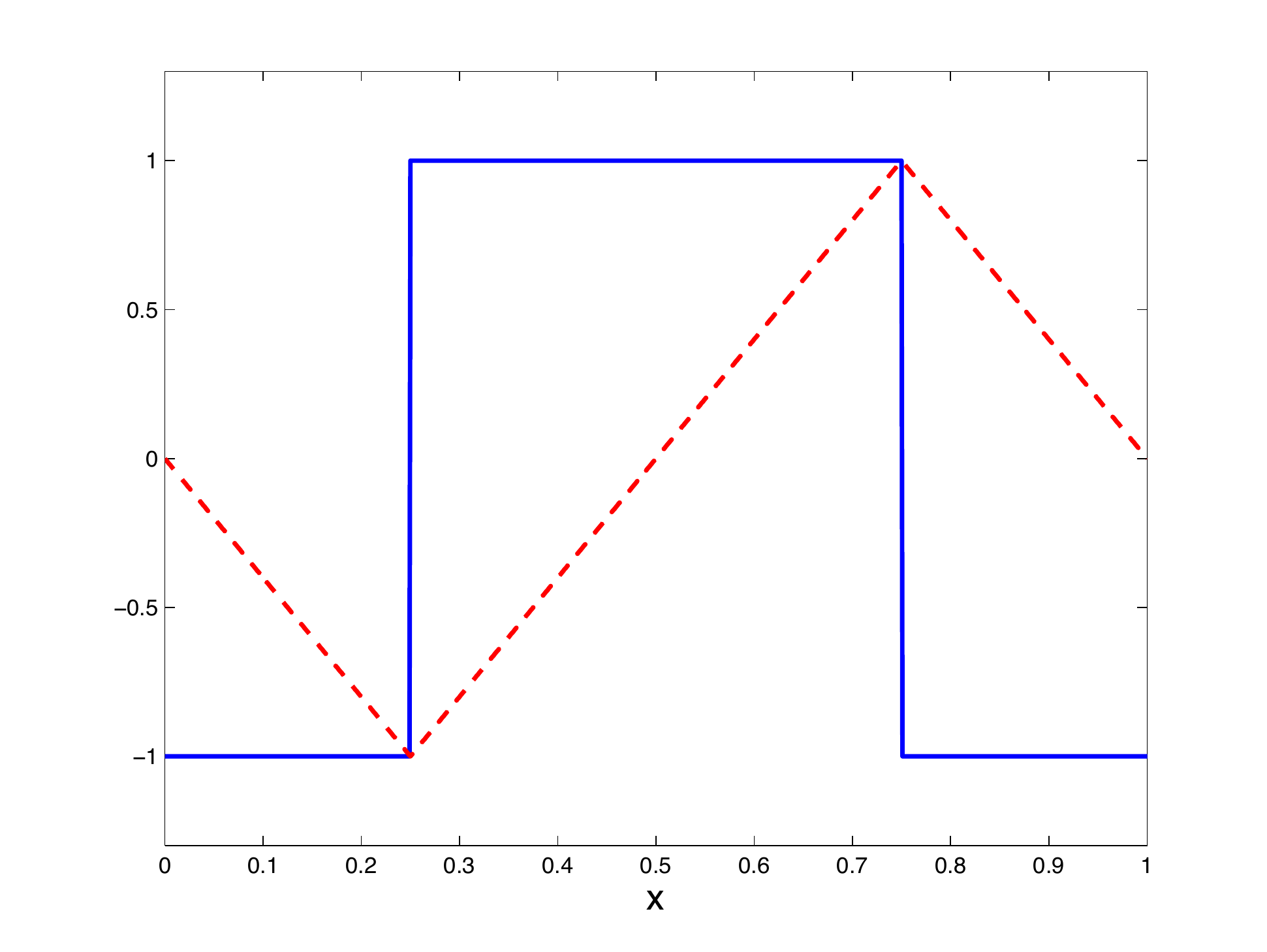}
\end{center}
\caption{The functions $u_4$ (solid blue line) and $q_4$ (dashed red line) as defined in \eqref{eq:cylinder} and \eqref{eq:dualcylinder}, respectively.}
\label{fig:unbrecpractice1}
\end{figure}
Now assume we want to compute the minimizer of \eqref{eq:rof} for our data given
in terms of $f(x) = u_4(x) + n(x)$. Here, $n$ represents a noise function which
we assume to have mean zero (i.e. $\int_0^1 n(x) ~dx = 0$), and to fulfill $N(0)
= N(1)$, with $N$ denoting the primitive of $n$ (i.e. $N^\prime(x) = n(x)$). In
order to apply Theorem \ref{thm:eigfctnoisy} we need to
guarantee the existence of constants $\mu > 0$ and $\eta \geq 1/\alpha$ such that
\eqref{eq:noisyeigcond} is satisfied. We therefore make the attempt to define
\begin{align*}
q(x) := \frac{\mu}{4} q_4(x) + \eta N(x) \, \text{,} 
\end{align*}
for which we obtain $q(0) = q(1) = 0$, due to the definition of $n$. Moreover,
we discover
\begin{align*}
\dupr[{L^2([0, 1])}]{q^\prime}{u_4} &= \mu + 2 \eta 
\int_{\frac{1}{4}}^{\frac{3}{4}} n(x) ~dx \nonumber\\
&= 4 \left( \frac{\mu}{4} + \frac{\eta}{2} \left( N\left(\frac{3}{4}\right) -
N\left(\frac{1}{4}\right) \right)\right) \, \text{,}
\end{align*} 
which equals $\tv(u_4) = 4$ if $\mu$ satisfies
\begin{align*}
\mu = 4 - 2\eta\left( N\left(\frac{3}{4}\right) - N\left(\frac{1}{4}\right) \,
\right) \text{.}
\end{align*} 
Assume $N(3/4) \geq N(1/4)$, then we even obtain
\begin{align}
\mu \leq 4 - \frac{2}{\alpha} \left( N\left(\frac{3}{4}\right) -
N\left(\frac{1}{4}\right) \right) \leq 4 = \lambda \, \text{,}\label{eq:mucond}
\end{align}
due to $\eta \geq 1/\alpha$.
Note that in order to obtain $\mu > 0$ we need to ensure
\begin{align}
\alpha > \frac{1}{2}\left( N\left(\frac{3}{4}\right) -
N\left(\frac{1}{4}\right) \right) \, \text{,}\label{eq:alphabound}
\end{align}
otherwise Theorem \ref{thm:eigfctnoisy} cannot be applied. 
Assuming to choose $\eta = 1/\alpha$, the loss of contrast modifies to
\begin{align*}
c = 1 - 4 \alpha + 2\left(N\left(\frac{3}{4}\right) -
N\left(\frac{1}{4}\right)\right)
\end{align*}
in case that \eqref{eq:alphabound} does hold.

\begin{figure}[!ht]
\begin{center}
\subfigure[$\alpha = \frac{19\pi + 2}{152\pi}$]{\includegraphics[width=0.32\textwidth]{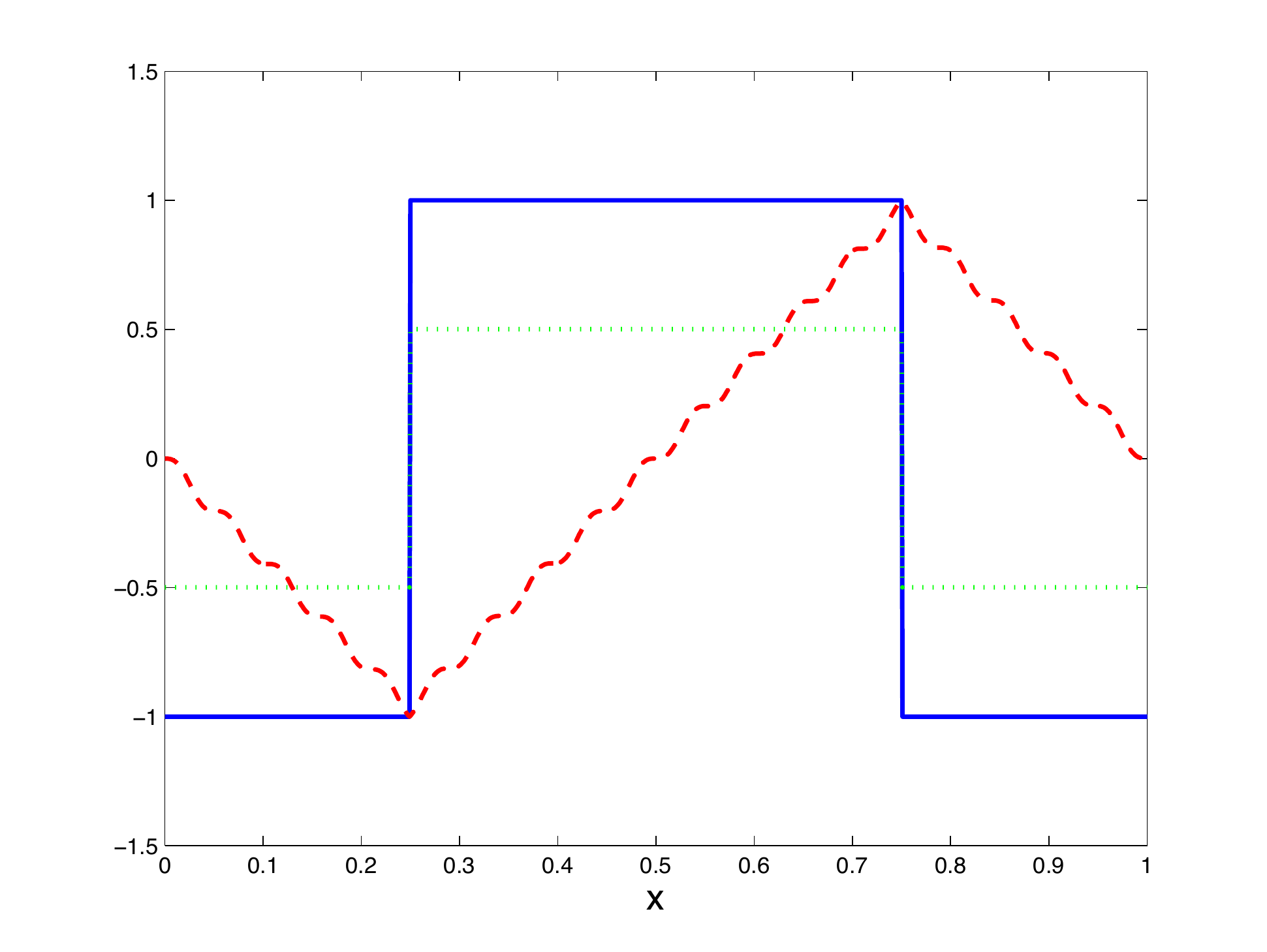}}
\subfigure[$\alpha = \frac{1}{75\pi}$]{\includegraphics[width=0.32\textwidth]{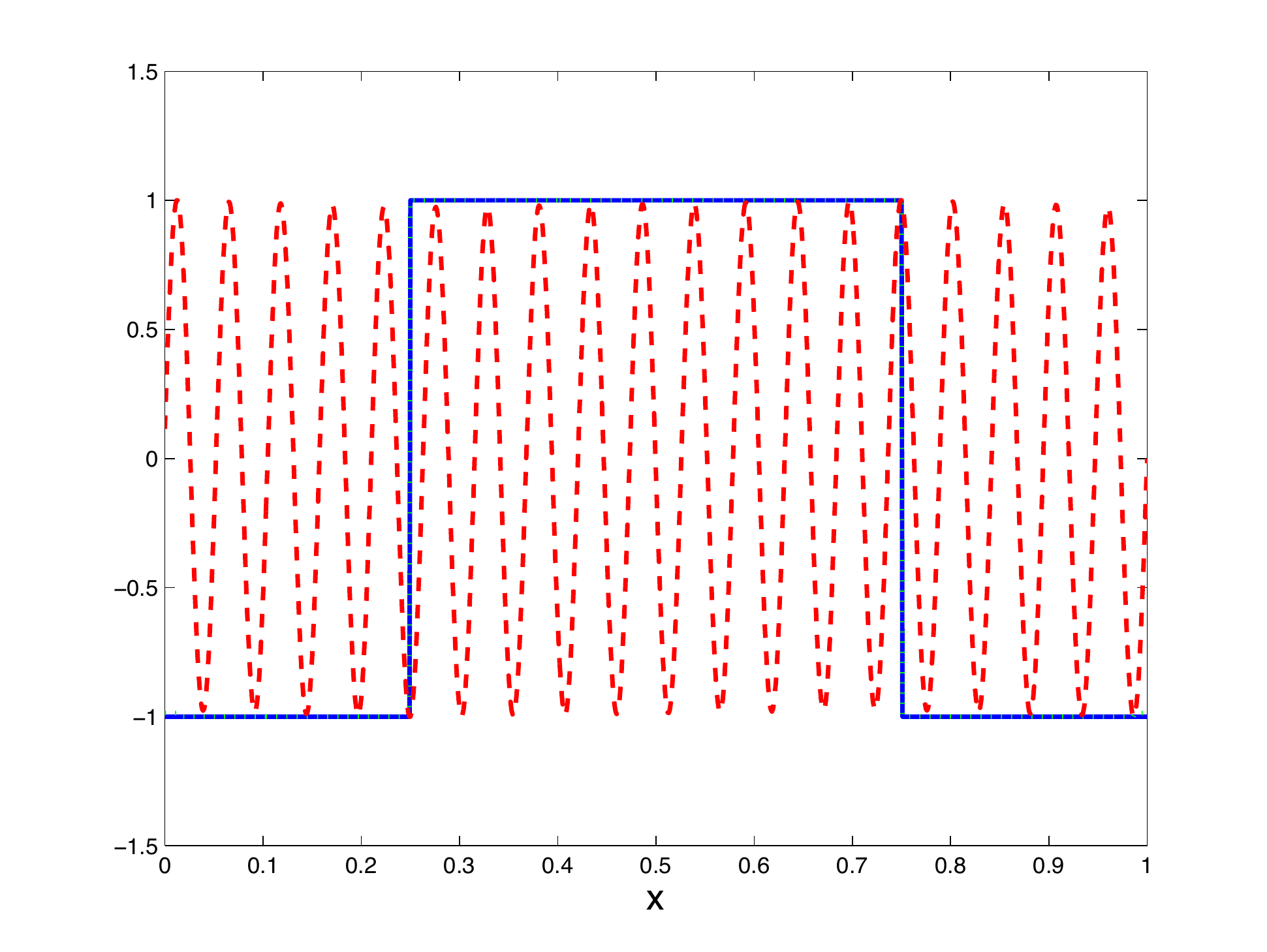}}
\subfigure[$\alpha = \frac{1}{74\pi}$]{\includegraphics[width=0.32\textwidth]{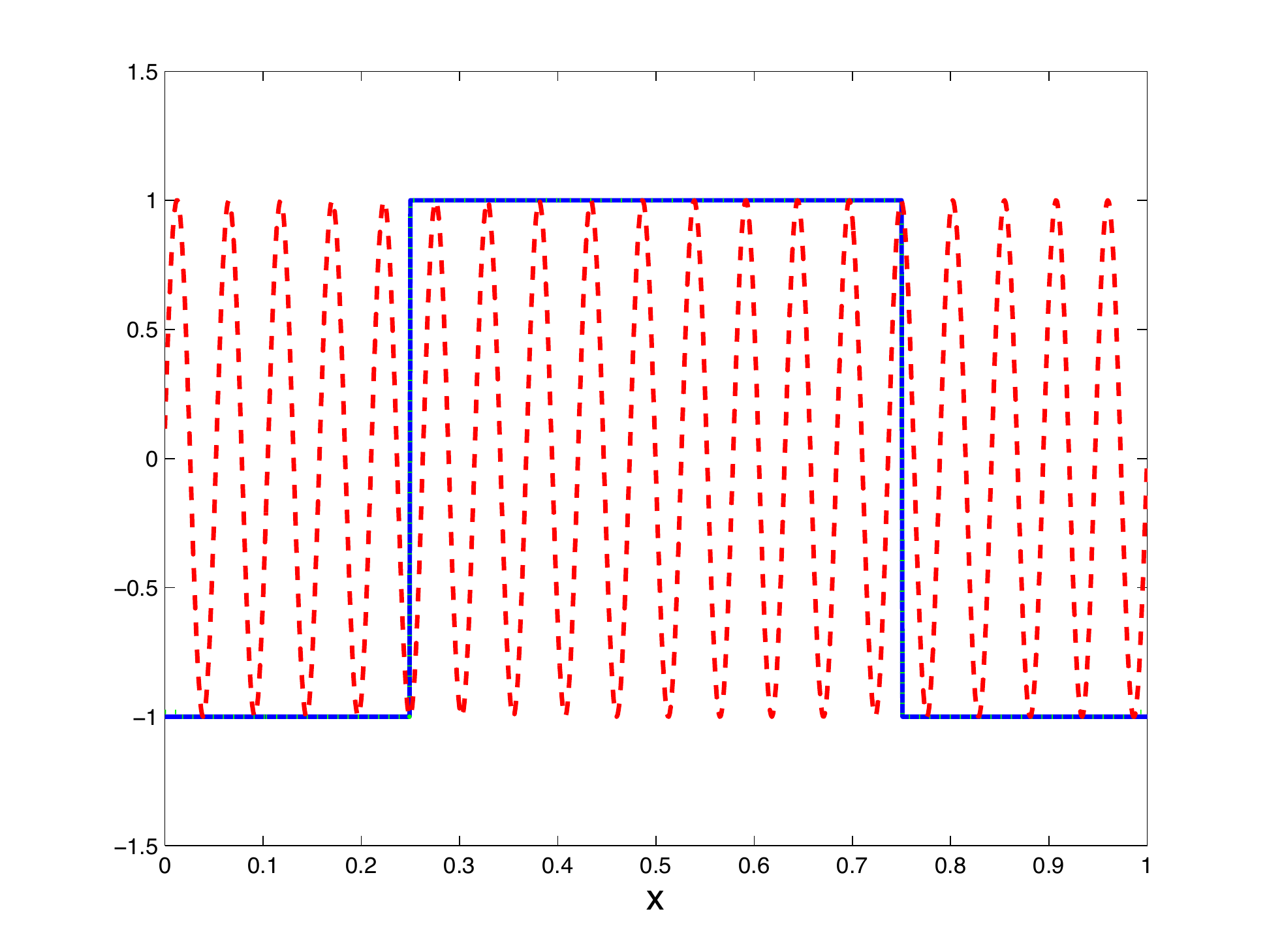}}\\
\subfigure[Closeup, $\alpha = \frac{1}{75\pi}$]{\includegraphics[width=0.49\textwidth]{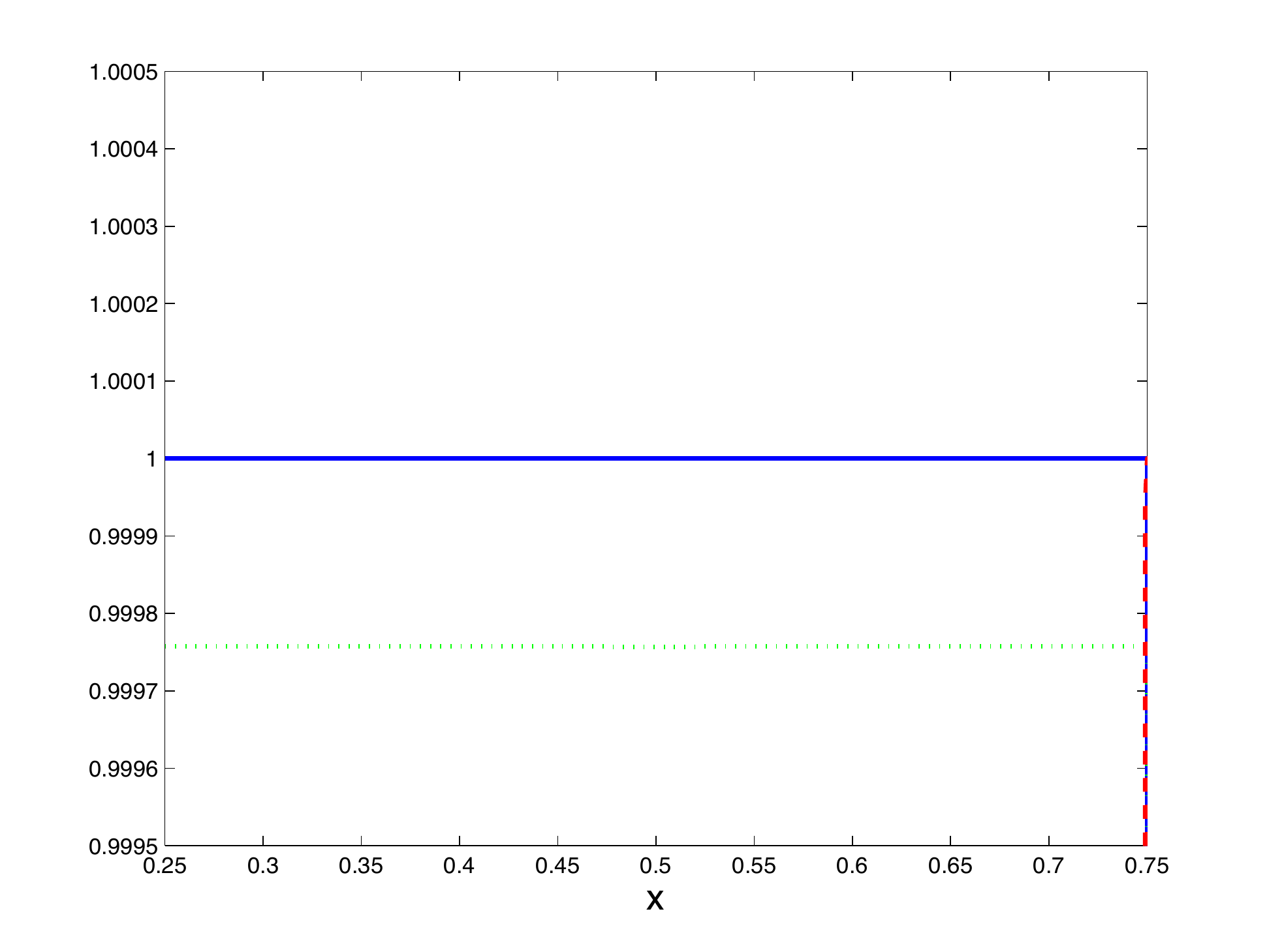}}
\subfigure[Closeup, $\alpha = \frac{1}{74\pi}$]{\includegraphics[width=0.49\textwidth]{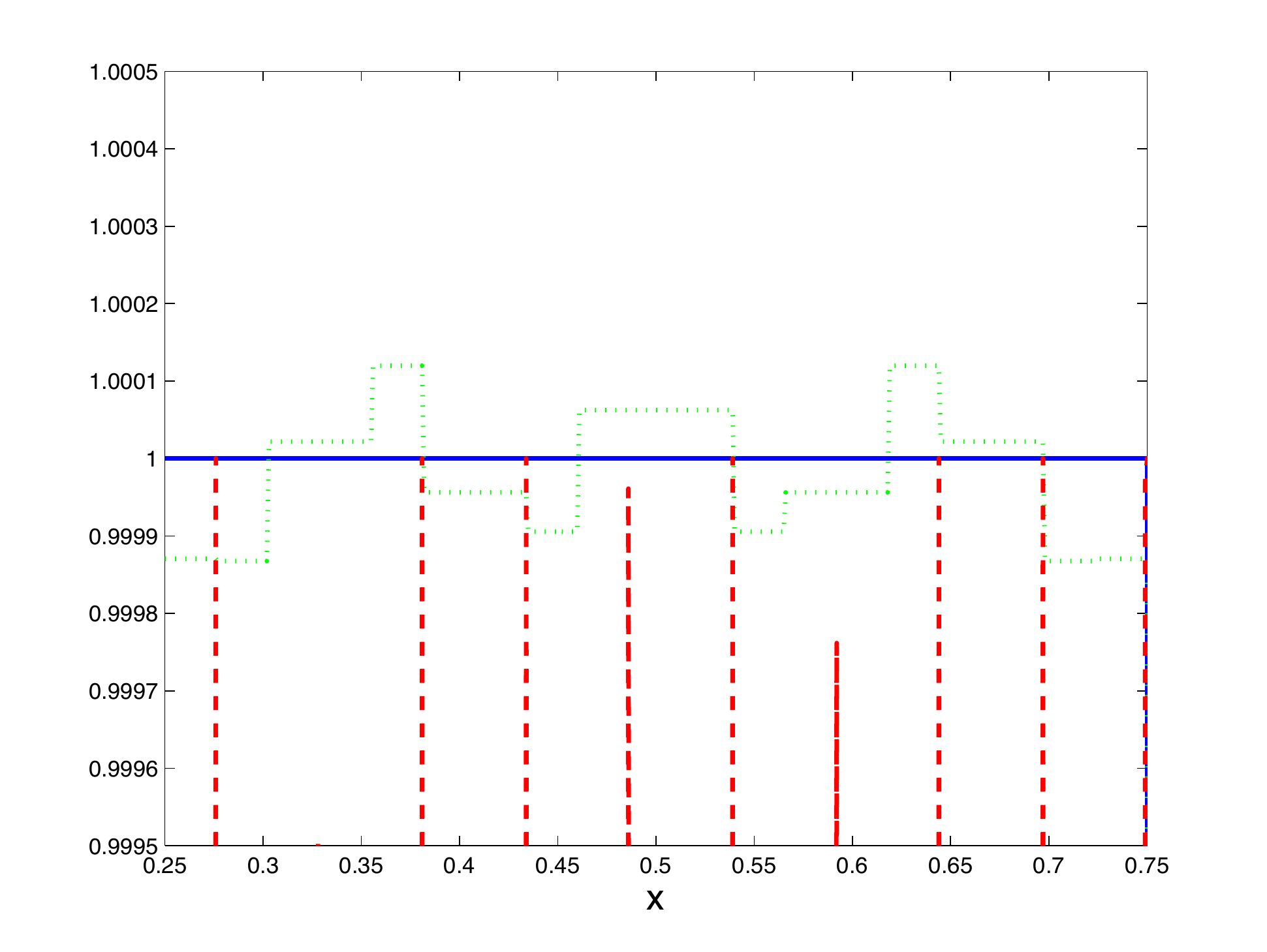}}
\end{center}
\caption{Computational ROF-reconstructions for input datum $f = u_4 + n$, with $u_4$ and $n$ being defined as in \eqref{eq:cylinder} and \eqref{eq:cylindernoise}, respectively. It is remarkable to see that as soon as $\alpha$ is chosen such that Theorem \ref{thm:eigfctnoisy} cannot be applied, the numerical computations fail to compute a multiple of $u_4$, indicating the sharpness of the Theorem.}
\label{fig:unbrecpractice2}
\end{figure}

\noindent Let us consider a specific example now. We decide to choose the periodic
function
\begin{align}
n(x) := A \cos\left(38 \pi x\right)\label{eq:cylindernoise}
\end{align}
to be our noise function, with amplitude $A$ and frequency $38$. Note that this
noise function satisfies the mean zero property as well as $N(0) = N(1)$, for
$N(x) = (A \sin\left(38 \pi x\right))/(38 \pi)$. Moreover, we compute $N(3/4) =
A/(38\pi)$ and $N(1/4) = -A/(38\pi)$, so that \eqref{eq:mucond} now reads as
$\mu \leq 4 \left( 1 - A/(38 \alpha \pi)\right)$, and according to \eqref{eq:alphabound}, $\alpha$ should be chosen to satisfy 
\begin{align*}
\alpha > \frac{A}{38\pi} \, \text{.}
\end{align*}
In Figure \ref{fig:unbrecpractice2} you can see several computational solutions of \eqref{eq:rof} for the specified input datum $f = u_4 + n$, for $A = 1/2$ and numerous $\alpha$-values. The computations nicely indicate that as soon as the assumptions of Theorem \ref{thm:eigfctnoisy} are violated, artifacts are introduced in the computational reconstruction.

In the unbiased case of Theorem \ref{thm:invscalspeignoisy} we may conclude from the considerations above that we obtain $u(t) = u_4$ as the solution of the Inverse Scale Space Flow \eqref{eq:invscale}, for $4 \leq t <  (38\pi)/A$ and $A < (19\pi)/2$.

\subsubsection*{The ROF Model and Calibrable Sets}

In \cite{meyer} Y. Meyer has basically proved that the characteristic function of a circle is a singular vector of isotropic total variation on $\mathbb{R}^N$. In \cite{alter1, alter2} the class of characteristic functions that correspond to singular vectors in the terminology of this paper has been extended to calibrable sets. In the following we want to recall properties of calibrable sets and show why they correspond to singular vectors of total variation.
\begin{defi}
Let $C \subset \mathbb{R}^2$ be a bounded, convex and connected set with its boundary $\partial C$ being of class $C^{1, 1}$. Then $C$ is called \normalfont calibrable \itshape if there exists a vector field $\xi \in L^\infty(C; \mathbb{R}^2)$ with $\| \xi \|_\infty \leq 1$ such that
\begin{align}
\begin{split}
-\div ~\xi &= \text{const} = \lambda_C := \frac{P(C)}{|C|} \ \text{in $C$}\\
\xi \cdot \nu &= - 1 \ \text{a. e. on $\partial C$}
\end{split}
\end{align}
holds, for $\nu$ denoting the outer unit normal to $\partial C$, $P(C)$ denoting the perimeter and with $|C|$ representing the volume of $C$.
\end{defi}
\begin{rem}\normalfont
Note that the perimeter simply equals the isotropic total variation $\tv(\chi_C)$ of the corresponding characteristic function $\chi_C$.
\end{rem}
\begin{thm}
In \cite{alter2} it has been proved that for calibrable sets the following conditions are satisfied.
\begin{itemize}
	\item $C$ is the solution of the problem
	\begin{align*}
	\min_{X \subset C} P(X) - \lambda |X| \, \text{.}
	\end{align*}
	\item The inequality 
	\begin{align*}
		\esup_{x \in \partial C} \kappa(x) \leq \lambda
	\end{align*}
	holds, with $\kappa(x)$ denoting the curvature of $\partial C$ at point $x$.	
\end{itemize}
\end{thm}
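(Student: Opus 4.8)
Both bullets would follow from the existence of the calibration field $\xi$; write $\lambda := \lambda_C = P(C)/|C|$ throughout. For the first bullet I would note first that $P(C) - \lambda|C| = 0$ by the very definition of $\lambda_C$ (and $P(\emptyset) - \lambda|\emptyset| = 0$ as well), so it suffices to prove $P(X) - \lambda|X| \geq 0$ for every set of finite perimeter $X \subseteq C$. Integrating the identity $-\div\,\xi = \lambda$ over $X$ and applying the Gauss--Green formula for bounded vector fields with bounded distributional divergence (the divergence-measure field / Anzellotti normal-trace theory) gives
$$ \lambda|X| = -\int_X \div\,\xi \; dx = -\int_{\partial^* X} (\xi\cdot\nu_X)\; d\mathcal{H}^1 \;\leq\; \|\xi\|_\infty\, P(X) \;\leq\; P(X), $$
where $\partial^* X$ is the reduced boundary and $\nu_X$ the measure-theoretic outer normal; on the part of $\partial^* X$ lying on $\partial C$ one even has $\xi\cdot\nu_X = -1$, which is consistent with the bound. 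Hence $P(X) - \lambda|X| \geq 0 = P(C) - \lambda|C|$, so $C$ is a minimizer of $P(\cdot) - \lambda|\cdot|$ among subsets of $C$.

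For the second bullet I would exploit this minimality by a first-variation argument. Fix a nonnegative Lipschitz function $\psi$ on $\partial C$ and deform $C$ inward along the vector field $-\psi\nu$; since $C$ is convex and $\partial C$ is of class $C^{1,1}$, for small $t>0$ this produces a family of subsets $C_t \subseteq C$. The standard first variations are $\frac{d}{dt}|C_t|\big|_{t=0} = -\int_{\partial C}\psi\; d\mathcal{H}^1$ and $\frac{d}{dt}P(C_t)\big|_{t=0} = -\int_{\partial C}\kappa\psi\; d\mathcal{H}^1$, hence
$$ \frac{d}{dt}\Bigl[\,P(C_t) - \lambda|C_t|\,\Bigr]\Big|_{t=0^+} = \int_{\partial C}(\lambda - \kappa)\,\psi\; d\mathcal{H}^1 \;\geq\; 0, $$
the last inequality because $C_t \subseteq C$ and $C$ is the minimizer. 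Since $\psi \geq 0$ is otherwise arbitrary, this forces $\kappa \leq \lambda$ $\mathcal{H}^1$-almost everywhere on $\partial C$, i.e. $\esup_{x \in \partial C}\kappa(x) \leq \lambda$. A more pointwise alternative is to observe that $\xi\cdot\nu = -1$ together with $\|\xi\|_\infty \leq 1$ forces $\xi = -\nu$ on $\partial C$, and then to compare $\xi$ with the signed distance function to $\partial C$, whose boundary Laplacian equals $-\kappa$; but that route needs extra regularity of $\xi$ and is less robust.

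The routine parts — that $P(C) - \lambda|C| = 0$, that the inward flow of a convex set stays inside it, and the first-variation formulas — I expect to be painless. The genuine obstacle is making the two integration-by-parts steps rigorous at the regularity actually available: $\xi$ is merely an $L^\infty$ field whose distributional divergence is the bounded function $-\lambda$, so the Gauss--Green identity in the first step must be read through the theory of divergence-measure fields and their normal traces on sets of finite perimeter, and the first-variation formula in the second step must be applied at a $C^{1,1}$ boundary, where $\kappa \in L^\infty(\partial C)$ rather than being continuous. Both technicalities are precisely what is developed in \cite{alter2}; once granted, the estimates above close the proof.
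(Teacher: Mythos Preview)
The paper does not give its own proof of this theorem: the statement is quoted verbatim as a result ``proved in \cite{alter2}'', with no argument supplied, so there is no paper-proof to compare your proposal against. Your sketch is essentially the standard route taken in the calibrable-sets literature (and in \cite{alter2} in particular): the first bullet via Gauss--Green applied to the calibration field on an arbitrary $X\subseteq C$, and the second bullet from the minimality just obtained by an inward first variation (or, equivalently, from $\xi=-\nu$ on $\partial C$). You have also correctly isolated the only real technicalities, namely the Anzellotti trace theory for $L^\infty$ divergence-measure fields and the validity of the first-variation formula at $C^{1,1}$ regularity with $\kappa\in L^\infty(\partial C)$; these are exactly what \cite{alter2} develops, and once they are granted your argument closes.
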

In \cite{bellettini} it has already been proved that for calibrable sets $C$ the solution of the isotropic total variation with an characteristic function $\chi_{C}$ as the input datum satisfies $\hat{u}(x) = (1 - \lambda \alpha) \chi_C (x)$. Thus, Theorem \ref{thm:eigfctclean} implies that the input datum already has to be a singular vector with singular value $\lambda$.

We finally mention that Agueh and Carlier \cite{aguehcarlier} have investigated a related class of ground state problems for total variation with constraints of the form $\int_\Omega G(|u|)~dx = 1$. Such may be interesting for denoising with noise models different from additive Gaussian, where hardly any examples of exact solutions exist, except for single ones in \cite{benning}.

\subsubsection*{The Anisotropic ROF Model in Two Dimensions}
Analytical solutions of the isotropic ROF model have widely been studied and
discussed in literature. However, analytical solutions of the anisotropic ROF
model have not attracted a similar attention, although many of them are easier
to describe, as we are going to see in the following.

\begin{figure}[!ht]
\begin{center}
\subfigure[On-top view of $u_{\sqrt{32}}$]{\includegraphics[width=0.49\textwidth]{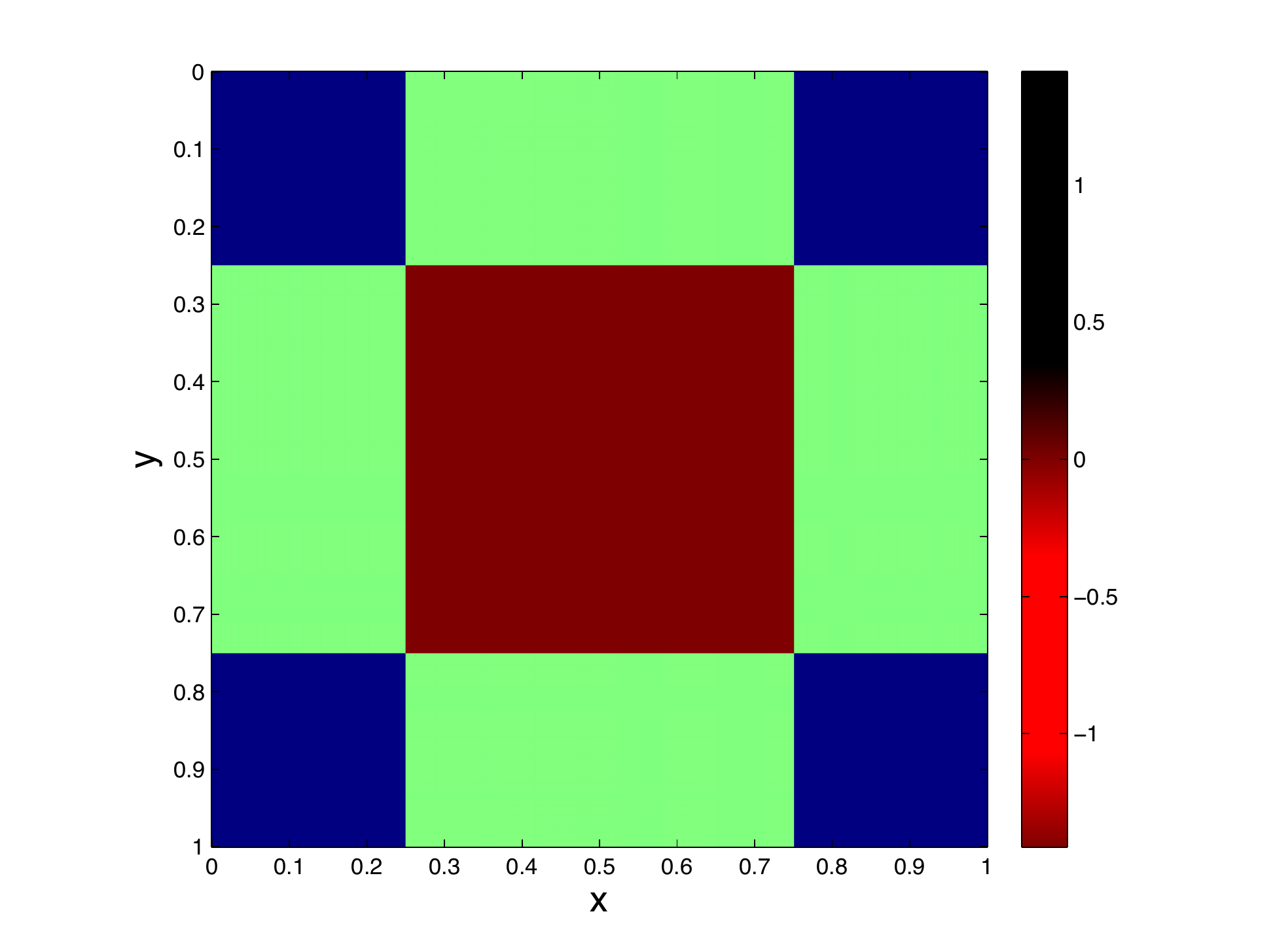}}
\subfigure[3D view of $u_{\sqrt{32}}$]{\includegraphics[width=0.49\textwidth]{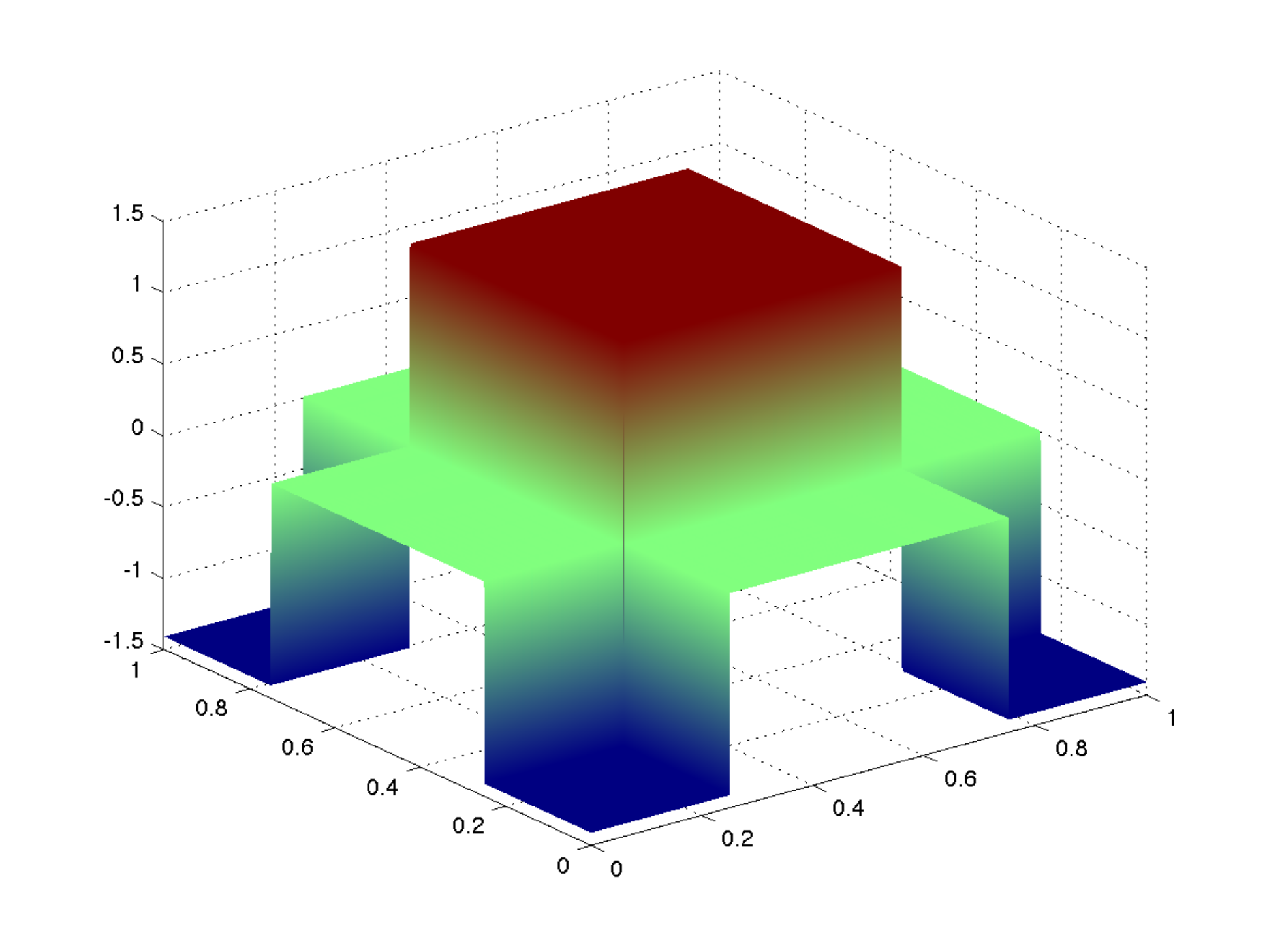}}\\
\subfigure[3D views of $q_{\sqrt{32}}^x$ and $q_{\sqrt{32}}^y$]{\includegraphics[width=0.7\textwidth]{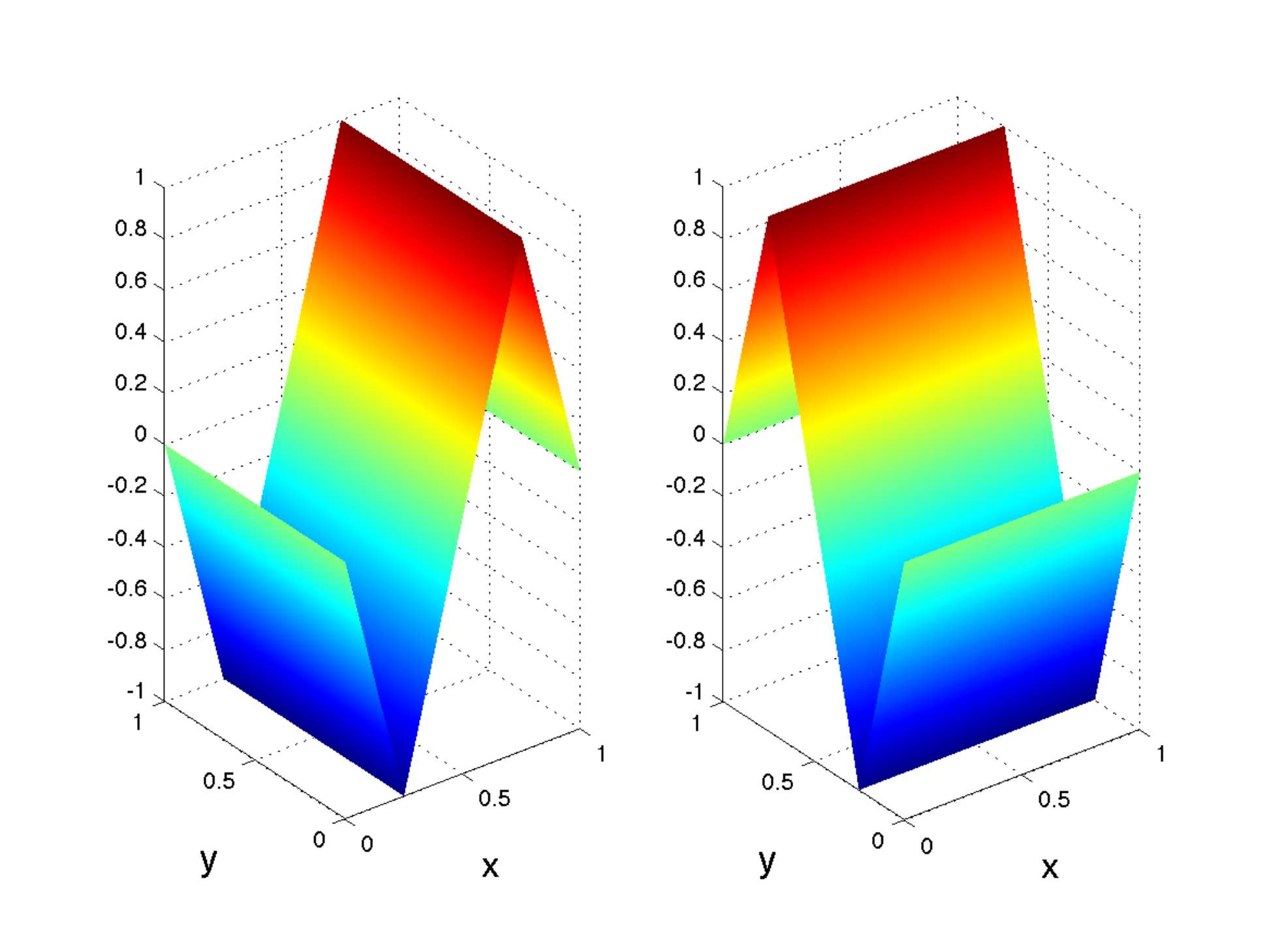}}
\end{center}
\caption{On-top and three-dimensional view of the singular vector $u_{\sqrt{32}}$ and its dual variables $p_{\sqrt{32}}^x$ and $p_{\sqrt{32}}^y$.}
\label{fig:anisotvsv}
\end{figure}

Recall \eqref{eq:cylinder} and its dual singular vector $p_4 = q^\prime_4$, with $q_4$ being defined in \eqref{eq:dualcylinder}.
Let us define the two-dimensional functions $q_{\sqrt{32}}^x:[0, 1]^2
\rightarrow [-1, 1]$ and $q_{\sqrt{32}}^y:[0, 1]^2 \rightarrow
[-1, 1]$ with $q_{\sqrt{32}}^x(x, y) := q_4(x)$ and $q_{\sqrt{32}}^y := q_4(y)$.
Then it is easy to see that the weak divergence of $q_{\sqrt{32}} = (q_{\sqrt{32}}^x,
q_{\sqrt{32}}^y)$ divided by $\sqrt{32}$ yields the function
\begin{align}
u_{\sqrt{32}}(x, y) = \sqrt{2} \begin{cases} 1 & (x, y) \in
\left[\frac{1}{4}, \frac{3}{4}\right]\\ 0 & \left(\left(\left|x -
\frac{1}{2}\right| \leq \frac{1}{4}\right) \wedge \left(\left|y -
\frac{1}{2}\right| > \frac{1}{4}\right)\right) \vee \left(\left(\left|x -
\frac{1}{2}\right| > \frac{1}{4}\right) \wedge \left(\left|y -
\frac{1}{2}\right| \leq \frac{1}{4}\right)\right)\\ -1 & \text{else}\end{cases}
\, \text{.}
\end{align}
Thus, with the same considerations as before we are able to prove that
$u_{\sqrt{32}}$ is a singular vector of $\tv$ with singular value $\lambda =
\sqrt{32}$, since we observe
\begin{itemize}
  \item $q_{\sqrt{32}}^x n_x = 0$ and $q_{\sqrt{32}}^y n_y = 0$, with $n_x$ and
  $n_y$ denoting the outer unit normals of $q_{\sqrt{32}}^x$ and
  $q_{\sqrt{32}}^y$ in $x$- and in $y$-direction, respectively
  \item $\|q \|_{L^\infty([0, 1]^2; \mathbb{R}^2)} =
  \max\left(\left|q_{\sqrt{32}}^x\right|, \left|q_{\sqrt{32}}^y\right|\right) =
  1$
  \item $\dupr[L^2({[0, 1]^2}; \mathbb{R}^2)]{\div
  q_{\sqrt{32}}}{u_{\sqrt{32}}} = \tv(u_{\sqrt{32}})$
\end{itemize}
The singular vector $u_{\sqrt{32}}$ and the dual vectors $q_{\sqrt{32}}^x$ and $q_{\sqrt{32}}^y$ are visualized in Figure \ref{fig:anisotvsv}.

\subsubsection*{Denoising Vector Fields}

Another generalization of the one-dimensional ROF model to multiple dimensions has been discussed recently in \cite{briani}, namely the denoising of vector fields $f \in L^2(\Omega;\R^n)$ via minimizing
\begin{equation}
	u = \argmin_{u \in L^2(\Omega;\R^n)} \left\{\frac{1}2 \int_\Omega (f-u)^2~dx + \int_\Omega |\div  ~u |~dx \right\}\, \text{.}
\end{equation}
As in the case of total variation, the $L^1$-norm of the divergence has to be generalized to a weak form
\begin{equation}
	J(u) = \sup_{\substack{\varphi \in C_0^\infty(\Omega) \\ \|
\varphi \|_{L^\infty(\Omega)} \leq 1}} \int_\Omega u \cdot \nabla \varphi ~dx.
\end{equation}

Concerning our investigation of ground states and singular values, this model yields an example with a huge set of trivial ground states. Any function $u \in L^2(\Omega;\R^n)$ such that $\nabla \cdot u = 0$ is obviously a ground state. In order to compute a nontrivial ground state we obtain the condition 
$$ \int_\Omega u  \cdot v~dx = 0 \qquad \text{for all $v$ with $\nabla \cdot v = 0$,} $$
and that $u$ has to be a gradient field, i.e.
$$ \lambda u = \nabla q \, \text{.} $$
The scalar $q$ is obtained from the minimization of 
$$\int_\Omega |\Delta q|~dx \quad \text{ subject to } \quad 
\int_\Omega \|\nabla q\|_{\ell^2(\R^n)}^2~dx = 1 \, \text{.} $$



\subsection{Support Pursuit}

While sparsity regularization with discrete $\ell^1$-functionals has been studied extensively in the last decade, the continuum analog was investigated only recently. At a first glance it seems that $L^1(\Omega)$ would be the straight-forward extension in terms of function spaces, but similar to the case of total variation the lack of a weak-star topology in $L^1(\Omega)$ prevents the applicability and often the existence of minimizers. Again the solution is to extend the variational approach to a slightly larger space, which is a dual space. In this case this dual space is the space of Radon measures ${\cal M}(\Omega)$, which is the dual space of $C_0(\Omega)$. The appropriate regularization functional as introduced in \cite{bredieshanna} is the zero-order total variation of a measure $\mu \in {\cal M}(\Omega)$, i.e.
\begin{equation}
	J(\mu) =  \sup_{\substack{\varphi \in C_0(\Omega) \\ \|
\varphi \|_{\infty} \leq 1}} \int_\Omega  \varphi ~d\mu.  
\end{equation}
The setup used in \cite{bredieshanna} is to choose $K=L^*$, with $L:\hmc \rightarrow C_0(\Omega)$ being a bounded linear operator. This allows to avoid working with the complicated dual space of ${\cal M}(\Omega)$ for some arguments. We note that in this setup regular singular vectors can rather be obtained from 
\begin{equation}
 \lambda LL^* \mu \in \partial J(\mu) \, \text{.} 	
\end{equation}
In \cite{decastro} this problem was analyzed in a compressed sensing setting, when $K$ consists of a finite number of forward projections (in a polynomial basis), so that $L$ can be written down explicitly. 

\subsubsection*{Bounded Operators}

Here we will consider two related cases including most practical examples we are aware of (except the Radon integral transform): First of all we analyze integral operators $K_\infty:{\cal M}(\Omega) \rightarrow L^2(\Sigma)$  of the form
\begin{equation}
	(K_\infty u)(x) = \int_\Omega k(x,y) ~d\mu(y)
\end{equation}
with a continuous kernel $k$. Then we shall turn to projection operators with $M$ measurements $K_M: {\cal M}(\Omega) \rightarrow \R^M$ of the form
\begin{equation}
	(K_M u) = \left(\int_\Omega k_j(x)~d\mu(x)\right)_{j=1,\ldots,M} \, \text{,}
\end{equation}
assuming again the $k_j$ to be continuous. Notice that $K_M$ can also be thought of as a semidiscretization of the operator $K_\infty$, e.g. by collocation methods, so it is natural to compare at least ground states for those operators. 
 
Our aim is to verify a natural extension of the $\ell^1$-case, where the ground state is a vector with a single nonzero entry. The natural analogue in the space of Radon measures is a concentrated measure $\delta_x$ for $x \in \Omega$ (we use the notation $\delta_x$ due to the relation to the Dirac delta distribution), with
\begin{equation}
	\int_\Omega \varphi(y)~d\delta_x = \varphi(x) \qquad \forall \varphi \in C_0(\Omega) \, \text{.}
\end{equation}
Indeed we can show that ground states are of this form and give a reasonably simple condition on their location.

\begin{thm} \label{thm:support}
Let $K_\infty$ and $K_M$ be as above. Then
\begin{itemize}
\item  A ground state $\mu_0^\infty$ of $K_\infty$ is given by $\mu_0^\infty=c\delta_z$, with $z$ satisfying 
$$ \int_\Omega k(x,z)^2 ~dx \geq \int_\Omega k(x,y)^2 ~dx \qquad \forall~y \in \Omega \, \text{,} $$
and with $c$ fulfilling
$$ c= \lambda_0^\infty = \frac{1}{\sqrt{\int_\Omega k(x,z)^2 ~dx}} \, \text{.} $$

\item A ground state $\mu_0^M$ of $K_M$ is given by $\mu_0^M=c \delta_z$, with $z$ satisfying 
$$ \sum_j k_j(z)^2 \geq \sum_j k_j(y)^2 \qquad \forall~y \in \Omega \, \text{,} $$
so that $c$ fulfills
$$ c = \lambda_0^\infty = \frac{1}{\sqrt{\sum_j k_j(z)^2}} \, \text{.} $$
\end{itemize}
\end{thm}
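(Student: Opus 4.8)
The plan is to pass to the dual description of a ground state from Definition~\ref{defi:gs2}. Since here $J(\mu)=\|\mu\|_{\mathcal{M}(\Omega)}$ vanishes only at $\mu=0$, the kernel of $J$ is trivial, so $\ker(J)^{\bot}=\mathcal{M}(\Omega)$ and a ground state in the sense of Definition~\ref{defi:gs2} is any measure maximising $\mu\mapsto\|K\mu\|$ over the unit ball $\{\mu:\|\mu\|_{\mathcal{M}(\Omega)}\le 1\}$. By the lemma identifying Definitions~\ref{defi:gs} and~\ref{defi:gs2}, the ground state in the sense of Definition~\ref{defi:gs} is then obtained by rescaling such a maximiser with $\lambda_0=1/\|K\mu\|$, which reproduces exactly the prefactor $c=\lambda_0$ in the statement and gives $J(\mu_0)=\lambda_0$. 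Thus it suffices to show that $\delta_z$, with $z$ as in the theorem, maximises $\mu\mapsto\|K_\infty\mu\|_{L^2(\Sigma)}$ (respectively $\mu\mapsto\|K_M\mu\|_{\R^M}$) over that unit ball.

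The core step is an upper bound obtained from the Cauchy-Schwarz inequality with respect to the total variation measure $d|\mu|$: for $\|\mu\|_{\mathcal{M}(\Omega)}\le 1$ and fixed $x$,
\[
\left(\int_\Omega k(x,y)\,d\mu(y)\right)^{2}\le\left(\int_\Omega |k(x,y)|\,d|\mu|(y)\right)^{2}\le |\mu|(\Omega)\int_\Omega k(x,y)^{2}\,d|\mu|(y).
\]
I would then integrate this in $x$ over $\Sigma$, interchange the two integrations (legitimate since the integrand is nonnegative, Tonelli), and use $|\mu|(\Omega)=\|\mu\|_{\mathcal{M}(\Omega)}\le 1$ to arrive at
\[
\|K_\infty\mu\|_{L^2(\Sigma)}^{2}\le |\mu|(\Omega)\int_\Omega \Phi(y)\,d|\mu|(y)\le |\mu|(\Omega)^{2}\,\sup_{y\in\Omega}\Phi(y)\le \sup_{y\in\Omega}\Phi(y),
\]
where $\Phi(y):=\|K_\infty\delta_y\|_{L^2(\Sigma)}^{2}=\int_\Sigma k(x,y)^{2}\,dx$. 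Continuity of $k$, hence of $\Phi$, together with compactness of $\overline{\Omega}$ then gives a maximiser $z$ of $\Phi$, which is precisely the point characterised in the statement. Finally, since $\|\delta_z\|_{\mathcal{M}(\Omega)}=1$ and $\|K_\infty\delta_z\|_{L^2(\Sigma)}^{2}=\Phi(z)=\sup_{y}\Phi(y)$, the chain of inequalities is saturated by $\mu=\delta_z$, so $\delta_z$ attains the maximum; consequently $\mu_0^{\infty}=\lambda_0^{\infty}\delta_z$ with $\lambda_0^{\infty}=1/\sqrt{\Phi(z)}$ is a ground state. The statement for $K_M$ is handled verbatim, replacing $\int_\Sigma(\cdot)\,dx$ by $\sum_{j=1}^{M}(\cdot)$ and $\Phi(y)$ by $\sum_{j=1}^{M}k_j(y)^{2}$.

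I expect the only genuinely delicate points to be measure-theoretic bookkeeping---controlling the signed integral by $d|\mu|$ and justifying the Tonelli interchange---and, exactly as in Example~\ref{exm:ell1gs}, the non-uniqueness of the ground state: when $\Phi$ (respectively $\sum_j k_j^{2}$) has several maximisers, each corresponding point mass is an equally valid ground state, which is why the theorem only asserts that $\delta_z$ is \emph{a} ground state. A secondary issue is that the maximiser $z$ should lie in $\Omega$ rather than just $\overline{\Omega}$, so that $\delta_z\in\mathcal{M}(\Omega)$ is admissible; this is automatic if one works over the closed domain or if $k$ extends continuously up to the boundary, which is implicit in the standing assumption that $k$ is continuous.
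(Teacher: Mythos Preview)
Your argument is correct, and it reaches the same key inequality as the paper, namely $\|K\mu\|_{\hmc}\le J(\mu)\,\sup_y\|K\delta_y\|_{\hmc}$, but the routes differ. You work with Definition~\ref{defi:gs2} and obtain the bound by applying Cauchy--Schwarz in the measure variable pointwise in $x$, then Tonelli and a sup-bound on $\Phi$. The paper instead verifies directly that $p_0=\lambda_0 K^*K\mu_0\in\partial J(\mu_0)$: it writes $\|K\mu\|_{\hmc}^2=\langle \mu, K^*K\mu\rangle_{\mathcal{M}(\Omega)}\le J(\mu)\,\|K^*K\mu\|_\infty$ and then bounds $(K^*K\mu)(y)=\langle K\delta_y,K\mu\rangle_{\hmc}$ pointwise by Cauchy--Schwarz in $\hmc$. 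The advantage of the paper's approach is that it is formulated for an abstract $K:\mathcal{M}(\Omega)\to\hmc$ and never uses the integral-kernel structure, so both $K_\infty$ and $K_M$ fall out as special cases without a separate calculation; your approach, while equally valid, is tied to having an explicit kernel and requires the Tonelli interchange and measure-theoretic bookkeeping you flagged. Conversely, your argument is perhaps more transparent about where the maximum of $\Phi$ enters and why equality is attained at $\delta_z$.
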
 
\begin{proof}
We prove a slightly more general version: for an operator $K:{\cal M}(\Omega) \rightarrow \hmc$, $\mu_0=c \delta_z$ is a ground state if 
$$ f(y) := \Vert K \delta_y \Vert_{\hmc} $$
attains a maximum at $z$. This yields the statements of the theorem as special cases.

\noindent Define $c=\lambda_0 = \frac{1}{\Vert K \delta_z \Vert_{\hmc}}$. Then we see that 
$$ \langle \lambda K^* K \mu_0, \mu_0 \rangle_{{\cal M}(\Omega)} = c = J(\mu_0). $$
Moreover, for any $\mu \in {\cal M}(\Omega)$ we have
$$ \langle \lambda K^* K \mu_0, \mu \rangle_{{\cal M}(\Omega)} = \lambda \langle K \delta_z, K \mu \rangle_{\hmc} 
\leq \lambda \Vert K \mu \Vert_{\hmc} \, \text{.}
$$ 
We thus verify that $\lambda^2 \Vert K \mu \Vert_{\hmc}^2 \leq J(\mu)^2$, or equivalently
$$ \Vert K \mu \Vert_{\hmc}^2 \leq J(\mu)^2 \Vert K\delta_z \Vert_{\hmc}^2 $$ 
holds. First of all we discover
$$ \Vert K \mu \Vert_{\hmc}^2 = \langle \mu, K^*K \mu \rangle_{{\cal M}(\Omega)} \leq J(\mu) \Vert K^* K \mu \Vert_{\infty} \, \text{.} $$
For $y \in \Omega$ we further estimate
$$ \vert (K^* K \mu)(y) \vert = \vert \langle K \delta_y, K \mu \rangle_{\hmc} \vert \leq \Vert K \delta_y \Vert_{\hmc} ~\Vert K\mu \Vert_{\hmc} \, \text{,} $$
and thus obtain
$$  \Vert K^* K \mu \Vert_{\infty} = \sup_y \vert (K^* K \mu)(y) \vert \leq {\sup_y f(y)} \Vert K\mu \Vert_{\hmc} \, \text{,}$$ 
which finally yields the assertion.
\end{proof}

\subsubsection*{An Unbounded Operator}

As a simple sketch of the ROF model in three dimensions (when embedding of $BV$ into $L^2$ fails), we study the case of $K^* K$ being the inverse Laplacian, i.e. $Ku=v$, with $v \in H_0^1(\Omega)$ solving $-\Delta v = u$ if $u \in {\cal M}(\Omega) \cap H^{-1}(\Omega)$, and for $\Omega$ denoting the unit ball in $\R^3$. Since a Dirac delta distribution is not an element of $H^{-1}(\Omega)$ in spatial dimension three, we cannot obtain such measures as singular vectors, hence the latter can at most be concentrated on manifolds with higher dimension. 

\noindent The equation for the singular vector becomes 
$$ \lambda \mu =  - \Delta p, \qquad p \in \partial J(\mu)$$
and we can look for radially symmetric solutions $\mu=M(r)$, $p=P(r)$. Hence
$$ \lambda r^2 M = - \partial_r (r^2 \partial_r P) $$
with the additional condition $P(1)=0$. 
Now a canoncial measure concentrated on a codimension one manifold is the one on a sphere with radius $R \in (0,1)$, which corresponds to $M$ being a concentrated measure in $r=R$, i.e. $M = c \delta_R$ with $c$ to be determined from the normalization condition 
$$ \int_0^1 P(r)^2 r^2~dr = 1 \, \text{.} $$  Then $P$ can be computed as
\begin{equation}
	P(r) = \left\{ \begin{array}{ll} \lambda c \frac{R^2}r(1-r) & \text{if } r \geq R \\ \lambda c R(1-R) & \text{if } r < R\end{array}\right. 
\end{equation}
Now $P$ attains a maximum at $r=R$, and we need to choose $\lambda$ such that $P(R)=1$ holds, which yields 
$\lambda = \frac{1}{cR(1-R)}$. Hence, we conclude that a measure concentrated on a sphere of radius $R$ is a singular vector, the smallest singular value in this class is obtained for $R=\frac{1}2$. Note that $\lambda \rightarrow \infty$ for $R\rightarrow 	1$ or $R\rightarrow 0$, which confirms that neither a concentrated measure in the origin nor a measure concentrated on $\partial \Omega$ is a singular vector.

\subsection{Sparsity and Variants}

As mentioned above sparsity-enforcing regularizations played an important role in image analysis and inverse problems in the last years. 
The usual setup in $\ell^1$-Regularization is $\umc=\ell^1(\R^n)$ and $\hmc=\ell^2(\R^m)$, the forward operator can thus be identified with a matrix $K \in \R^{m \times n}$. The proof of Theorem \ref{thm:support} immediately implies that a ground state is given by $\gamma e_i$, where $i \in \{1,\ldots,n\}$ is the index of a row with maximal Euclidean norm, i.e. $\Vert Ke_i \Vert_2 \geq \Vert Ke_j \Vert_2$ for any  $j \in \{1,\ldots,n\}$.
In the following we discuss two other relevant examples related to sparsity and their ground states, respectively singular vectors. 

\subsubsection*{Low Rank}

In order to compute matrix-valued solutions of low rank, the nuclear norm has been considered in various papers recently, respectively shown to be an exact relaxation of minimal rank problems in some cases (cf. \cite{fazel,recht}). In this case $\umc=\R^{m \times n}$ and $\hmc=\R^M$ with 
\begin{equation}
	J(u) = \sum_{j=1}^{\min\{m,n\}} \sigma_j(u) \, \text{,}
\end{equation}
for which $\sigma_j$ are the (classical) singular values of the matrix $u$, and with $M \ll (m \times n)$ denoting the number of known entries of $u$.

It seems natural that a ground state is of rank one. To see this, we take an arbitrary matrix $u$ with singular value decomposition
$$ u = \sum_{j=1}^{\min\{m,n\}} \sigma_j U_j V_j^T \, \text{.} $$
Then we have
$$ \Vert Ku \Vert_{\hmc} = \left\Vert \sum_{j=1}^{\min\{m,n\}} \sigma_j K U_j V_j^T \right\Vert_{\hmc} \leq \sum_{j=1}^{\min\{m,n\}} \sigma_j \Vert K U_j V_j^T \Vert_{\hmc}
\leq \max{\Vert K U_j V_j^T \Vert_{\hmc}}  J(u). $$
Equality is obtained if $u$ is a rank-one matrix. Thus, we conclude that the ground state is of rank one and obviously it is a multiple of $UV^T$, where $U$ and $V$ maximize $\Vert K U V^T \Vert_{\hmc}$ among all orthogonal matrices.

\subsubsection*{Joint Sparsity}

In some applications it is more reasonable that few groups of variables have nonzero entries instead of just a few single variables being nonzero. This is modeled by so-called joint sparsity or group lasso approaches (cf. \cite{wakin,yuanlin,meier}), the most prominent example being
\begin{equation}
	J(u) = \sum_{i=1}^n \sqrt{\sum_{j=1}^{m_i} u_{ij}^2},
\end{equation}
in $\umc=\R^N$ and $\hmc = \R^M$, where $N=\sum_{i=1}^n m_i. $ 
Since the goal is to obtain group sparsity, we expect solutions such that $u_{i\cdot}$ vanishes for most indices $i$. In particular one expects the ground state such that $u_{i\cdot}$ is a nonzero vector for only a single index $i$.

In order to characterize the ground state we introduce the matrices $A_i \in \R^{M \times m_i}$ representing the linear operator $K$ restricted to elements $u$ supported in the index set $\{i\} \times \{1,\ldots,n_i\}$. We shall also use the notation $U_i=(u_{ij})_{j=1,\ldots,m_i} \in \R^{n_i}$. With those we can write
$$J(u)  = \sum_{i=1}^n \Vert U_i \Vert_{\ell^2(\R^{m_i})}, \quad \Vert K u \Vert_{\ell^2(\R^M)} = \left\Vert \sum_{i=1}^n A_i U_i \right\Vert_{\ell^2(\R^M)} \, \text{.} $$ 
By the triangle inequality 
\begin{equation}
	\Vert K u \Vert_{\ell^2(\R^M)} \leq \sum_{i=1}^n  \Vert A_i U_i \Vert_{\ell^2(\R^M)} \leq \sum_{i=1}^n \sigma_i^{\text{max}} \Vert U_i \Vert_{\ell^2(\R^{m_i})}
	\leq J(u) \max_{i \in \{1, \ldots, n\}}  \sigma_i^{\text{max}} \, \text{,}
\end{equation}
for which $\sigma_i^{\text{max}}$ is the largest singular value of $A_i$. Equality is obtained if $U_k$ is the singular vector corresponding to singular value $\sigma_k^{\text{max}}$ with
$$  \sigma_k^{\text{max}} \geq  \sigma_i^{\text{max}}  \qquad \forall i \neq k$$ 
and all other $U_i$ equal zero. Thus, there are ground states with only one row different from zero, which perfectly corresponds to the motivation of group sparsity. The pattern of the nonzero row is a classical singular vector of the restricted matrix $A_i$.

%

\subsection{Infimal-Convolution Regularization}

Due to deficiencies of standard regularization functionals, constructions like infimal convolution of multiple regularization functionals have been considered recently (cf. 
\cite{chamlion,aujgillblcham,aujcham,starck,infconv,bredies}). The infimal convolution (inf-convolution) of two convex functionals $J_1$ and $J_2$ is defined as
\begin{equation}
	J(u):= \inf_{v \in \umc} (J_1(v) + J_2(u-v)),
\end{equation}
and appears to be a good way to combine the advantages of different regularization functionals. Ideally one would hope that the inf-convolution of $J_1$ and $J_2$ can lead to exact reconstruction of all solutions that are reconstructed exactly with $J_1$ or $J_2$.

\noindent Since the related variational problem can be reformulated as 
\begin{equation}
	(v, w) = \argmin_{v, w} \left\{\frac{1}2 \Vert K(v+w) - f \Vert^2 + \alpha (J_1(v) + J_2(w))\right\} \, \text{,}
\end{equation}
we can directly consider singular vectors in the product space for $u=(v,w)$, which are characterized by
\begin{equation}
	\lambda K^*K(v+w) = p_1 = p_2, \quad p_1 \in \partial J_1(v), \ p_2 \in \partial J_2(w) \, \text{.}
\end{equation}
In general we cannot expect that singular vectors of $J_1$ or $J_2$ are again singular vectors of the inf-convolution. The simplest case would be a singular vector $v$ 
$$ \lambda K^*K v = p_1 \in \partial J_1(v), \qquad w = 0. $$
Then we need that $p_1 \in \partial J_2(0)$, which is difficult to achieve for general combinations. However, the construction works at least for the ground state of one-homogeneous functionals $J_1$ and $J_2$. Let $v_0$ be the ground state of $J_1$ and $w_0$ be the one of $J_2$. Moreover we assume that $J_1(v_0) \leq J_2(w_0)$. 
Then we can estimate
$$ J_1(v) + J_2(w) \geq J_1(v_0) \Vert Kv \Vert + J_2(w_0) \Vert Kw \Vert \geq J_1(v_0) \Vert K(v+w)\Vert. $$ 
Equality is achieved if $w=0$ and $v=v_0$, hence the ground state of $J_1$ is also a ground state of $J$. Note that for $J_2(w_0) > J_1(v_0)$ we may conclude that $w_0$ is not a ground state, potentially not even a singular vector. Since such inequalities depend on the scaling of $J_1$ and $J_2$ this suggests that one should use a scaling such that the smallest singular values are equal.

\section{Conclusions and Open Problems}

In this paper we have generalized the notion of singular values and singular vectors to nonlinear regularization methods in Banach spaces and demonstrated their usefulness in the analysis. In particular we have derived results on the bias of variational methods and scale estimates, which have a particular geometric interpretation in the case of total variation denoising. Moreover, we have shown that singular vectors are the solutions that can be reconstructed exactly (up to a multiplicative constant) by variational regularization techniques. 

A major open problem is to obtain a constructive approach for computing singular values and singular vectors, or at least ground states, of arbitrary problems either analytically or numerically. A computational approach for similar problems was already discussed in \cite{heinbuehler}, as well as for similar problems with quadratic constraints in \cite{edelman,laiosher,manton,wen,yamada}. Our computational experiments indicate that such approaches can indeed compute singular vectors, however they do not converge robustly to the ground state and it is difficult to control to which singular vector the method will converge. 

\section{Acknowledgments}
This work was supported by the German Science Foundation DFG through grant BU 2327/6-1.
The authors want to thank Michael M\"{o}ller for fruitful discussions and for hints on the the proof of Lemma \ref{lem:tv1Dgs}.


\end{document}